\newcommand{\email}[1]{\href{mailto:#1}{\nolinkurl{#1}}}
\def\endproof{\vbox{\hrule height0.6pt\hbox{\vrule height1.3ex%
width0.6pt\hskip0.8ex\vrule width0.6pt}\hrule height0.6pt}}
\newtheorem{theorem}{Theorem}[section]
\newtheorem{lemma}[theorem]{Lemma}
\newtheorem{corollary}[theorem]{Corollary}
\newtheorem{proposition}[theorem]{Proposition}
\theoremstyle{plain}{\theorembodyfont{\rmfamily}%
\newtheorem{notation}[theorem]{Notation}}
\theoremstyle{plain}{\theorembodyfont{\rmfamily}%
\newtheorem{assumption}[theorem]{Assumption}}
\theoremstyle{plain}{\theorembodyfont{\rmfamily}%
}
\theoremstyle{plain}{\theorembodyfont{\rmfamily}%
\newtheorem{problem}[theorem]{Problem}}
\theoremstyle{plain}{\theorembodyfont{\rmfamily}%
\newtheorem{example}[theorem]{Example}}
\theoremstyle{plain}{\theorembodyfont{\rmfamily}%
\newtheorem{remark}[theorem]{Remark}}
\theoremstyle{plain}{\theorembodyfont{\rmfamily}%
\newtheorem{definition}[theorem]{Definition}}
\theoremstyle{plain}{\theorembodyfont{\rmfamily}%


\definecolor{labelkey}{rgb}{0,0.08,0.45}
\definecolor{refkey}{rgb}{0,0.6,0.0}
\definecolor{Brown}{rgb}{0.45,0.0,0.05}
\definecolor{dgreen}{rgb}{0.00,0.49,0.00}
\definecolor{dblue}{rgb}{0,0.08,0.75}
\definecolor{nido}{rgb}{0.6,0.0,0.4}
\RequirePackage[colorlinks,hyperindex]{hyperref}
\hypersetup{linktocpage=true,citecolor=dblue,linkcolor=dgreen}
\tolerance 2500

\DeclareMathOperator{\essup}{\mathrm{ess-sup}}

\DeclareMathOperator{\Argmin}{\text{\rm Argmin}}
\DeclareMathOperator{\Lip}{\text{\rm Lip}}
\DeclareMathOperator{\argmin}{\mathrm{argmin}}
\DeclareMathOperator{\spann}{\text{\rm span}}

\renewcommand{\leq}{\ensuremath{\leqslant}}
\renewcommand{\geq}{\ensuremath{\geqslant}}

\newcommand{\minimize}[2]{\ensuremath{\underset{\substack{{#1}}}%
{\text{\rm minimize}}\;\;#2 }}
\newcommand{\infimize}[2]{\ensuremath{\underset{\substack{{#1}}}%
{\text{\rm infimize}}\;\;#2 }}

\newcommand{\Frac}[2]{\displaystyle{\frac{#1}{#2}}}

\newcommand{\pair}[2]{{\left\langle{{#1},{#2}}\right\rangle}}
\newcommand{\abs}[1]{{\lvert {#1}\rvert}}
\newcommand{\norm}[1]{{\lVert {#1}\rVert}}
\newcommand{\menge}[2]{\big\{{#1}~\big |~{#2}\big\}} 
\newcommand{\Menge}[2]{\Big\{{#1}~\Big|~{#2}\Big\}}

\newcommand{\genf}{\ensuremath{F}}
\newcommand{\genff}{\ensuremath{G}}

\newcommand{\ww}{\ensuremath{{\mathsf{w}}}}
\newcommand{\LL}{\ensuremath{{\mathscr L}}}

\newcommand{\Loss}{\ensuremath{{\Upsilon}}}
\newcommand{\FF}{\ensuremath{{\EuScript{F}}}}

\newcommand{\C}{\ensuremath{{\mathcal C}}}
\newcommand{\CU}{\ensuremath{U}}
\newcommand{\CV}{\ensuremath{V}}
\newcommand{\B}{\ensuremath{{\mathcal{B}}}}

\newcommand{\W}{\ensuremath{{\mathcal W}}}
\newcommand{\fmap}{\ensuremath{{\Phi}}}
\newcommand{\dictfunc}{\ensuremath{\phi}}
\newcommand{\Pj}{\ensuremath{P}}
\newcommand{\MC}{\ensuremath{{\mathcal M}}}
\newcommand{\XC}{\ensuremath{{\mathcal X}}}

\newcommand{\YC}{\ensuremath{{\mathcal Y}}}
\newcommand{\ZC}{\ensuremath{{\mathcal Z}}}
\newcommand{\YY}{\ensuremath{{\mathsf Y}}}
\newcommand{\ZZ}{\ensuremath{{\mathsf Z}}}
\newcommand{\ZZZ}{\ensuremath{{Z}}}
\newcommand{\ud}{\ensuremath{{\mathrm d}}}

\newcommand{\emp}{\ensuremath{{\varnothing}}}

\newcommand{\cart}{\ensuremath{\raisebox{-0.5mm}{\mbox{\LARGE{$\times$}}}}}

\newcommand{\RR}{\ensuremath{\mathbb{R}}}
\newcommand{\RP}{\ensuremath{{\mathbb R}_+}}
\newcommand{\RPP}{\ensuremath{{\mathbb R}_{++}}}

\newcommand{\lip}[2]{\ensuremath{{\Lip}{({#1}};{{#2}}})}
\newcommand{\RPX}{\ensuremath{\left[0,+\infty\right]}}
\newcommand{\RX}{\ensuremath{\left]-\infty,+\infty\right]}}

\newcommand{\EE}{\ensuremath{\mathsf E}}
\newcommand{\PO}{\ensuremath{\mathsf P}}
\newcommand{\PP}{\ensuremath{ P}}

\newcommand{\QQ}{\ensuremath{ P}}
\newcommand{\QQI}{\ensuremath{ \widetilde{P}}}
\newcommand{\NN}{\ensuremath{\mathbb N}}
\newcommand{\KK}{\ensuremath{\mathbb K}}
\newcommand{\ev}{\ensuremath{\operatorname{ev}}}
\newcommand{\weakly}{\ensuremath{\:\rightharpoonup\:}}
\newcommand{\exi}{\ensuremath{\exists\,}}

\newcommand{\pinf}{\ensuremath{{+\infty}}}

\newcommand{\dom}{\ensuremath{\text{\rm dom}\,}}
\newcommand{\ran}{\ensuremath{\text{\rm ran}\,}}

\newcommand{\sign}{\ensuremath{\operatorname{sign}}}

\newcommand{\inte}{\ensuremath{\text{\rm int}\,}}


\numberwithin{equation}{section}
\thispagestyle{empty}
\begin{document}

\vskip -10mm

\title{\sffamily\LARGE Regularized Learning Schemes 
in Feature Banach Spaces\thanks{The work of P. L. Combettes
was supported by the 
CNRS MASTODONS project under grant 2016TABASCO.}%
\footnote{Contact author: P. L. Combettes, 
{\email{plc@math.ncsu.edu}}, phone: +1 (919) 515 2671.}}

\author{Patrick L. Combettes$^1$,\; Saverio Salzo$^2$,\; 
and Silvia Villa$^3$\\[3mm]
\small
\small $\!^1$North Carolina State University\\
\small Department of Mathematics\\
\small Raleigh, NC 27695-8205, USA\\
\small \email{plc@math.ncsu.edu}
\\[3mm]
\small
\small $\!^2$Massachusetts Institute of Technology 
and Istituto Italiano di Tecnologia\\
\small Laboratory for Computational and Statistical Learning\\
\small Cambridge, MA 02139, USA\\
\small \email{saverio.salzo@iit.it}
\\[3mm]
\small
\small $\!^3$ Politecnico di Milano\\
\small Dipartimento di Matematica\\
\small 20133 Milano, Italy\\
\small \email{silvia.villa@polimi.it}
}

\date{~}

\maketitle

\vspace{-12mm}
\begin{abstract} 
This paper proposes a unified framework for the investigation of
constrained learning theory in reflexive Banach spaces 
of features via regularized empirical risk minimization. 
The focus is placed on 
Tikhonov-like regularization with 
totally convex functions. This 
broad class of regularizers provides a flexible model 
for various priors on the features, including in particular hard 
constraints and powers of Banach norms. In such context, the 
main results establish 
a new general form of the 
representer theorem
and the consistency of the corresponding learning schemes 
under general conditions on the loss function, the geometry 
of the feature space, and the modulus of total convexity of the
regularizer.
In addition, the proposed analysis 
gives new insight into basic tools such as 
reproducing Banach spaces, feature 
maps, and universality.
Even when 
specialized to Hilbert spaces, this framework yields new results 
that extend the state of the art.
\end{abstract}

{\bfseries Keywords}. 
consistency, 
Banach spaces, 
empirical risk, 
feature map,
reproducing kernel, 
regularization, 
representer theorem, 
statistical learning, 
totally convex function.

{\bfseries MSC 2010 subject classifications}. 
62G08, 46E22, 46N30, 68T05, 60B11.



\section{Introduction}
\normalem

A common problem arising in decision sciences is to infer a
functional relation from the observation of a finite number of 
realizations $(x_i,y_i)_{1\leq i\leq n}$
of random input/output samples from an unknown common
distribution $P$ \cite{CuZh2007,Gyorfi02,Tsyba09,Vap98}.
Given a loss function $\ell$
and a set $\C$ of functions from the input set $\XC$ to the output
set $\YC$, the problem is formalized as follows
\begin{equation}
\label{eq:minrisk}
\infimize{f \in \C}R(f),\qquad R(f) = \int_{\XC \times \YC}
\ell(x,y,f(x)) P(\ud ( x, y)).
\end{equation}
Since $P$ is not known, the goal is to devise 
a {\em consistent learning scheme}, that is, a rule that assigns to
each sample 
$(x_i,y_i)_{1\leq i\leq n}$ an estimator $f_n\in\C$ such that,
$R(f_n) \to \inf R(\C)$  as $n$ becomes arbitrarily large.

In this paper, we consider estimators defined by Tikhonov-like
regularization. Given an empirical approximation $R_n$ of the
{\em risk} $R$ and a parameter space $\FF$, we consider a
hypothesis space of functions from $\XC$ to $\YC$ described
through a linear operator $A\colon \FF \to \YC^\XC$. An
estimator is defined by the problem
\begin{equation}
\label{eq:minrisk3}
\minimize{u\in\FF}{R_n(A u)}+\lambda_n G(u),
\end{equation}
where $(\lambda_n)_{n\in\NN}$ in $\RPP$ is a vanishing 
sequence and $G\colon\FF\to [0,+\infty]$ is a {\em regularizer},
that is, a function modeling some known properties of the target.
The above approach is classical, and related to the theory of
regularized M-estimators \cite{VanGeer2000}
and regularized empirical risk minimization \cite{Vap98}.
Many popular learning algorithms are off-springs of this approach,
including support vector machines, ridge regression, and sparsity 
based methods \cite{Vap98,ZouHastie2005}, to name a few.

The goal of this paper is to study the theoretical properties of a
large family of learning schemes of the form~\eqref{eq:minrisk3}
designed for problem \eqref{eq:minrisk}.
In particular, we consider very general forms of constraint sets,
parameterizations of the hypothesis space, and regularizers. 
Flexibility in the choice of these quantities plays a crucial
role in the incorporation of the information potentially available 
on the problem at hand. More precisely, we assume 
$\C$ to be a large set of functions defined by pointwise constraints 
on the function values, e.g., 
the set of positive functions, 
the parameter ({\em feature}) space $\FF$ to be 
a reflexive Banach space, and the regularizer to be a totally
convex function. 
Moreover, we take $\YC$ as a subset of a Banach space, so as to
deal with multi-task learning \cite{Evg2005,Zhang2013}
and regression with functional response \cite{Ram2005,Ferr06}.
Within this context, our contribution is 
twofold: we analyze the variational problem~\eqref{eq:minrisk3}, 
characterizing the form of its solutions,  
and establish sufficient conditions for the consistency 
of the corresponding estimators.

Problem in~\eqref{eq:minrisk3} is usually analyzed in 
reproducing kernel Hilbert spaces. Indeed, 
in this setting, the characterization of the form of the
minimizers is well known and is typically referred to as
representer theorem \cite{Kime70, Scho01}. It provides explicit
expressions for the minimizers in terms of the corresponding
reproducing kernel \cite{DeVito04}. The case of hypothesis spaces
which are Banach spaces is much less studied; see, e.g.,
\cite{Zhang2009,Zhang2012,Zhang2013}. A first contribution of our
paper is to further develop these studies providing a refined
analysis of the reproducing property in reflexive Banach spaces,
considering also the question of universality
\cite{Capo2008,CarDevToiUma10,Mic2006} in the presence of
constraints. A crucial difference with respect to the Hilbert
space setting is that in Banach spaces, feature maps, rather than
the kernel, become the natural quantities to study the problem,
since the kernel may even not exist.  Indeed, we prove a new  form
of the representer theorem for general probability measures and
extended-valued convex regularizers that characterizes the
minimizers in terms of the feature map, the subgradient of the
loss, and the subgradient of the regularizer. Moreover, we show
that the  computation of the solution of \eqref{eq:minrisk3} can be
reduced to that of the dual optimization problem, which is finite
dimensional and convex. This fact can be quite helpful in making
Banach space problems more practical numerically, in contrast with,
for instance, the results in \cite{Zhang2013} that lead to 
solving a nonlinear system of equations. 

Regarding  the statistical analysis, our primary concern is to
provide minimal but explicit conditions on
problem~\eqref{eq:minrisk3} to ensure consistency of its minimizers
with respect to problem~\eqref{eq:minrisk}. For that purpose, a
stability approach \cite{Demo09,SteChi2008} turns out to be
natural. Indeed, while different strategies can be considered,
e.g., based on covering numbers, fat-shattering dimensions
\cite{CuZh2007,Wang11,Wu2006}, or Rademacher complexities
\cite{Bartlett02,Mend03}, these results provide conditions in terms
of the complexity measures that need to be made explicit. As we
comment later in the paper (see Remark~\ref{rmkIuh7td14}), in the
general setting considered here, this turns out to be a problem in
its own right; moreover, stronger assumptions on the probability
measure and on the loss are usually required. Finally, we note that
approaches using Rademacher complexities  do not seem to be
applicable outside of the setting of Euclidean space-valued
functions and separable losses, since no suitable comparison
principle \cite[Theorem~4.12]{LedTal91} exists.

Our stability approach allows us to bypass these difficulties and
directly obtain explicit conditions under the general assumptions
outlined above.  More precisely, our statistical analysis is based
on a sensitivity theorem characterizing the dependence of the
solution of problem 
\eqref{eq:minrisk3} on the underlying probability measure.
The analysis is conducted in terms of the feature map and
it relies on various tools of convex analysis, 
geometry of Banach spaces, and probability in Banach spaces.
The modulus of total convexity of the regularizer $G$
and the Rademacher type of the dual of $\FF$ play a key role, 
but we remark that the existence of the kernel is not required.
Overall, we establish a non trivial extension of the approach in
Hilbert spaces considered in \cite{Demo09,SteChi2008}.

The contributions of the paper are the following.
\begin{itemize}
\item We consider a constrained risk minimization problem and a
general form of learning schemes based on Tikhonov-like
regularization with totally convex regularizers and 
Banach function spaces of Banach space-valued functions.
 \item We advance the theory of reproducing Banach spaces and study the problem of universality under constraints.
 \item We analyze the variational problem defining Tikhonov regularization, and provide a novel characterization of its solutions, 
 generalizing previous forms of the representer theorem.
 \item We provide minimal explicit sufficient conditions for consistency using a stability argument.
 \end{itemize}

Notation is provided in Section~\ref{sec:notation}. 
Section~\ref{sec:LBs} is 
devoted to the study of Banach spaces of vector-valued functions
and their description by operator-valued feature maps;
universality is studied in the presence of constraints  and
the representer and 
sensitivity theorems are established.
In Section~\ref{sec:Lvr}, the regularized learning scheme
is formalized and the main consistency theorems are
presented. Finally, the Appendix contains technical results
on the Lipschitz continuity of convex functions, 
totally convex functions, Tikhonov-like regularization,
and concentration inequalities in Banach spaces.

\section{Notation and basic facts}
\label{sec:notation}

We set $\RP=[0,+\infty[$ and $\RPP=\left]0,+\infty\right[$.
Let $\B\neq\{0\}$ be a real Banach space.
The closed ball of $\B$ of radius $\rho\in\RPP$ centered at 
the origin is denoted by $B(\rho)$. Let $p\in [1,+\infty]$. The conjugate of $p$ is
\begin{equation}
p^*=\begin{cases} +\infty & \text{if}\;\;p=1\\
p/(p-1) & \text{if}\;\;1<p<+\infty\\
1&\text{if}\;\;p=+\infty.
\end{cases}
\end{equation}

\noindent
\textbf{Convex Analysis}

\noindent
Let $\genf\colon\B\to\RX$. The \emph{domain of $\genf$} is 
$\dom \genf=\menge{u\in \B}{\genf(u)<\pinf}$ and $\genf$ is 
\emph{proper} if $\dom \genf\neq\emp$. 
Suppose that $\genf$ is proper and convex.
The \emph{Moreau subdifferential} of $\genf$ is the 
set-valued operator
\begin{equation}
\partial \genf\colon\B\to 2^{\B^*}\colon u\in \B \mapsto \menge{u^*
\in \B^*}{(\forall v\in\B)\;\genf(u)+\pair{v-u}{u^*}\leq \genf(v)},
\end{equation}
and its domain is $\dom\partial \genf=\menge{u\in \B}
{\partial \genf(u)\neq\emp}$. 
Moreover, for every $(u,v)\in \dom \genf \times \B$, we set
$\genf^\prime(u;v)=\lim_{t \to 0^+} (\genf(u+t v)-\genf(u))/t$.
If $\genf$ is proper and bounded from below and 
$\C \subset \B$ is  such that $\C\cap \dom \genf \neq \emp$, we put 
$\Argmin_\C \genf=\menge{u\in \C}{
\genf(u)=\inf \genf(\C) }$, and when it is a singleton we denote 
by $\argmin_\C \genf$ its unique element. Moreover, we set
\begin{equation}
(\forall\epsilon\in\RPP)\quad
\Argmin_\C^\epsilon \genf=\menge{u\in \C}{
\genf(u)\leq\inf \genf(\C)+\epsilon}\,.
\end{equation}
We denote by $\Gamma_0(\B)$ the class of functions 
$\genf\colon\B\to\RX$ which are proper, convex, and lower 
semicontinuous. We set $\Gamma_0^+(\B)=\menge{\genf\in\Gamma_0(\B)}{\genf \geq 0 }$.

\noindent
\textbf{Geometry of Banach spaces}

\noindent
We say that $\B$ is of Rademacher 
\emph{type} $q\in [1,2]$ \cite[Definition~1.e.12]{LindTzaf1979} if 
there exists $T\in [1,+\infty[$, so that for every
$n\in\NN\smallsetminus\{0\}$ and $(u_i)_{1\leq i\leq n}$ in $\B$,
\begin{equation}
\label{eq:type_p}
\int_0^1 \Big\lVert\sum_{i=1}^n r_i(t) u_i \Big\rVert^q\ud 
t\leq T\bigg(\sum_{i=1}^n \norm{u_i}^q\bigg)^{1/q},
\end{equation}
where $(r_i)_{i\in\NN}$ denote the Rademacher functions, that is,
for every $i\in\NN$, $r_i\colon [0,1]\to\{-1,1\}\colon t
\mapsto\sign(\sin (2^i\pi t))$. The smallest $T$ for which 
\eqref{eq:type_p} holds is denoted by $T_q$.
Since every Banach space is of Rademacher type $1$, this notion
is of interest for $q\in\left]1,2\right]$. Moreover, a Banach space 
of Rademacher type $q\in\left]1,2\right]$ is also of Rademacher
type $p\in\left]1,q\right[$.

The Banach space $\B$ is called \emph{smooth} \cite{Ciora90} if,
for every $u \in \B$ there exists a unique $u^* \in \B^*$ such that 
$\norm{u^*}=1$ and $\pair{u}{u^*} = 1$. The smoothness property
is equivalent to the G\^ateaux differentiability of the norm on $\B\setminus\{0\}$.
We say that $\B$ is \emph{strictly convex} if, for every $u$ and every $v$ in $\B$
such that $\norm{u}=\norm{v}=1$ and $u\neq v$, one has $\norm{(u+v)/2} <1$.
The \emph{modulus of convexity of $\B$} is 
\begin{equation}
\begin{array}{rl}
\delta_\B\colon\left]0, 2\right]& \to \RP\\
\varepsilon&\mapsto\inf\Big\{1-\Big\Vert\Frac{u+v}{2}\Big\Vert
\,\Big\vert\,(u, v)\in \B^2, \norm{u}=\norm{v}=1, 
\norm{u-v}\geq \varepsilon \Big\},
\end{array}
\end{equation}
and the \emph{modulus of smoothness of $\B$} is 
\begin{equation}
\begin{array}{rl}
\rho_\B\colon\RP&\to\RP\\
\tau&\mapsto\sup \Big\{ \Frac{1}{2}\big(\norm{u+ v} +
\norm{u-v}\big)-1\,\Big\vert\,  (u, v)\in \B^2,
\norm{u}=1,\ \norm{v}\leq \tau \Big\}.
\end{array}
\end{equation}
We say that $\B$ is \emph{uniformly convex} if $\delta_\B$ 
vanishes only at zero, and \emph{uniformly smooth} if 
$\lim_{\tau \to 0} \rho_\B(\tau)/\tau=0$ 
\cite{Beauzamy1985,LindTzaf1979}. Now let $q\in[1,\pinf[$. Then
$\B$ \emph{has modulus of convexity of power type} $q$ if there 
exists $c\in\RPP$ such that, for every
$\varepsilon\in\left ]0, 2\right]$,
$\delta_\B(\varepsilon)\geq c \varepsilon^q$, and it 
\emph{has modulus of smoothness of power type} $q$ if there exists 
$c\in\RPP$ such that, for every $\tau\in\RPP$, 
$\rho_\B(\tau)\leq c \tau^q$ \cite{Beauzamy1985,LindTzaf1979}. 
A smooth Banach space with modulus of smoothness of power type $q$
is of Rademacher type $q$ \cite[Theorem~1.e.16]{LindTzaf1979}.
Therefore, the notion of Rademacher type is weaker than that of 
uniform smoothness of power type, in particular it does not imply
reflexivity (see the discussion after
\cite[Theorem~1.e.16]{LindTzaf1979}).

If $p\in\left]1,\pinf\right[$, the \emph{$p$-duality map} of $\B$ 
is $ J_{\B,p}=\partial (\norm{\cdot}^p/p)$ \cite{Ciora90}, and hence 
\begin{equation}
\label{e:JJJ}
(\forall u\in \B)\quad J_{\B,p}(u)
=\menge{u^*\in\B^*}{\pair{u}{u^*}=\norm{u}^p\quad
\text{and}\quad \norm{u^*}=\norm{u}^{p-1} }.
\end{equation}
For every $u \in \B$ and every $\lambda \in \RP$, $J_{\B,p}(\lambda u) = \lambda^{p-1} J_{\B,p}(u)$ and $J_{\B,p}(-u) = - J_{\B,p}(u)$.
For $p=2$ we obtain the \emph{normalized duality map} $J_\B$.
Moreover, if $\B$ is reflexive, strictly convex, and smooth, then $
J_{\B,{p}}$ is single-valued and its unique selection, 
which we denote
also by $ J_{\B,{p}}$, is a bijection from $\B$ onto 
$\B^*$ and $J_{\B^*\!\!,p^*}=J_{\B,p}^{-1}$.  

\noindent
\textbf{Totally convex functions}

\noindent
Totally convex functions, were introduced in \cite{Butn97} and
further studied in \cite{ButIus2000,ButIusZal03,Zali02}. This
notion lies between strict convexity and strong convexity.
Suppose that $\B$ is reflexive and let 
 $\genf\colon\B\to\RX$ be a proper convex function.
The \emph{modulus of total convexity of $\genf$} \cite{ButRes06} is 
\begin{equation}
\begin{aligned}
\label{def:totconvmodi}
\psi\colon\dom  \genf\times\RR&\to\RPX\colon\\
(u,t)&\mapsto\inf\big\{\genf(v)-\genf(u)-\genf^\prime(u;v-u)
\,\big\vert\, v\in \dom \genf,\ \norm{v-u}=t \big\}
\end{aligned}
\end{equation}
and $\genf$ is \emph{totally convex at $u\in\dom \genf$} if, 
for every $t\in\RPP$, $\psi(u,t)>0$. The function $\genf$
is \emph{totally convex} if it is totally convex at every point of its domain.
Let $\psi$ be the modulus of total convexity of $\genf$. For every 
$\rho\in\RPP$ such that $B(\rho)\cap\dom \genf\neq\emp$,
the \emph{modulus of total convexity of $\genf$ on $B(\rho)$} is 
\begin{equation}
\label{def:modtotconvbound}
\psi_{\rho}\colon\RR\to\RPX\colon t\mapsto
\inf_{u\in B(\rho)\cap\dom \genf} \psi(u,t),
\end{equation}
and $\genf$ is \emph{totally convex on} $B(\rho)$ if
$\psi_{\rho}>0$ on $\RPP$. Moreover, $\genf$ is \emph{totally convex
on bounded sets} if, for every $\rho\in\RPP$ such that
$B(\rho)\cap\dom \genf\neq\emp$, it is totally convex on $B(\rho)$.
Let $\phi\colon\RR\to\RPX$ be such that $\phi(0)=0$ and
$\dom\phi\subset\RP$. We set
\begin{equation}
\label{eq:hatnotation}
\widehat{\phi}\colon\RR\to\RPX\colon t\mapsto 
\begin{cases}
0 &\text{if}\;\; t=0\\
\phi(t)/|t| &\text{if}\;\;t\neq 0.
\end{cases}
\end{equation}
The upper-quasi inverse of $\phi$ is \cite{Peno05,Zali02}
\begin{equation}
\label{eq:psidag}
\phi^\natural\colon\RR\to\RPX\colon
s\mapsto
\begin{cases}
\sup\menge{t\in\RP}{\phi(t)\leq s}&\text{if}\;\;s\geq 0\\
\pinf&\text{if}\;\;s<0.
\end{cases}
\end{equation}
Note that, for every $(t,s)\in\RP^2$,
$\phi(t)\leq s\Rightarrow t\leq\phi^\natural(s)$. 
We set
\begin{multline}
\label{e:A0}
\mathcal{A}_0=\big\{ \phi\colon\RR\to\RPX \big| \dom\phi\subset\RP,\;
\phi\;\text{is increasing on $\RP$,}\\
\phi(0)=0, (\forall t\in\RPP)\;\phi(t)>0 \big\}
\end{multline}
and
\begin{equation}
\label{e:A1}
\mathcal{A}_1=\Menge{\phi\in \mathcal{A}_0}{\widehat{\phi}
\;\text{is increasing on $\RP$,}
\;\lim_{t\to 0^+}\widehat{\phi}(t)=0}.
\end{equation}
Suppose that $\genf$ is totally convex at $u \in \dom \genf$. 
Then 
$\psi(u,\cdot) \in \mathcal{A}_0$ and 
$\psi(u,\cdot)^{\!\widehat{\phantom{a}}}\in\mathcal{A}_0$.
Moreover, if additionally $\partial \genf(u) \neq \varnothing$, then
$\psi(u,\cdot)^{\!\widehat{\phantom{a}}}\in\mathcal{A}_1$.
Suppose that
$\B$ is uniformly convex with power type, then, 
for every $r \in \RPP$, $\norm{\cdot}^r$ is totally convex on bounded sets
(See Appendix~\ref{subsec:totconv}).

\noindent
\textbf{Lebesgue spaces of vector-valued and operator-valued functions}

\noindent
When a Banach space is regarded as a measurable space it is
with respect to its Borel $\sigma$-algebra.
Let $(\ZC,\mathfrak{A},\mu)$ be
a $\sigma$-finite measure space and let $\YY$ be a separable real
Banach space with norm $\abs{\cdot}$. We denote by
$\MC(\ZC, \YY)$ the set of measurable functions from $\ZC$ into 
$\YY$. If $p \neq +\infty$, 
$L^p(\ZC, \mu; \YY)$ is the Banach space of all
(equivalence classes of) measurable functions $f\in\MC(\ZC,\YY)$ 
such that $\int_{\ZC} \abs{f}^p \ud \mu <+\infty$ and
 $L^\infty(\ZC,
\mu; \YY)$ is the Banach space of all (equivalence classes of)
measurable functions $f\in \MC(\ZC, \YY)$ which are
$\mu$-essentially bounded. Let $f\in L^p(\ZC,\mu;\YY)$. Then
$\norm{f}_p=\big(\int_{\ZC} \abs{f}^p \ud \mu \big)^{1/p}$ if 
 $p\neq +\infty$, and $\norm{f}_\infty
=\mu\verb0-0\essup_{z\in\ZC} \abs{f(z)}$ otherwise.
If $p\in\left]1,+\infty\right[$, 
$L^p(\ZC,\mu;\RR)$ is uniformly convex and uniformly smooth, and it
has modulus of convexity of power type $\max\{2,p\}$, and modulus 
of smoothness of power type $\min\{2, p\}$ \cite[p. 63]{LindTzaf1979}
and hence it is of Rademacher type 
$\min\{2,p\}$.
If $\ZC$ is  countable, $\mathfrak{A}=2^\ZC$, and $\mu$ is the
counting measure, 
we set $l^p(\ZC;\YY)=L^p(\ZC, \mu; \YY)$ and $l^p(\ZC)=L^p(\ZC,
\mu; \RR)$.
Let $\YY$ and $\ZZ$ be separable real Banach spaces. 
We denote by $\LL(\YY, \ZZ)$ the
Banach space of continuous linear operators from $\YY$ into $\ZZ$
endowed with the operator norm. 
A map  $\fmap\colon \ZC \to \LL(\YY,\ZZ)$
is \emph{strongly measurable}
if, for every $y\in\YY$, the function  
$\ZC\to\ZZ\colon z \mapsto\fmap(z)y$ is measurable.  In
such a case  the function $\ZC \to \RR\colon z \mapsto
\norm{\fmap(z)}$ is measurable \cite{Dincu2000}. If
$p \neq +\infty$,  $L^p[\ZC, \mu;
\LL(\YY,\ZZ)]$ is
the Banach space of all (equivalence classes of) strongly measurable
functions  $\fmap\colon \ZC \to \LL(\YY,\ZZ)$ such that
$\int_\ZC \norm{\fmap(z)}^p \mu(\ud z)<+\infty$ 
and 
$L^\infty[\ZC, \mu; \LL(\YY,\ZZ)]$ is the Banach space of all
(equivalence classes of) strongly measurable functions
$\fmap\colon \ZC \to \LL(\YY,\ZZ)$ such that
$\mu\verb0-0\essup_{z\in\ZC} \norm{\fmap(z)}<+\infty$ 
\cite{Blasco2010}.
Let $\fmap\in L^p[\ZC, \mu; \LL(\YY,\ZZ)]$. Then
$\norm{\fmap}_p=\big( \int_\ZC \norm{\fmap(z)}^p 
\mu(\ud z) \big)^{1/p}$ if $p\neq+\infty$, and 
$\norm{\fmap}_\infty=\mu\verb0-0\essup_{z\in\ZC} \norm{\fmap(z)}$
otherwise.

\noindent
\textbf{Probability}

\noindent
Let $(\Omega, \mathfrak{A}, \PO)$ be a probability space,
let $\PO^*$ be the associated outer probability. 
For every $\xi\colon\Omega\to\RR$ and $t\in\RR$, we set
\begin{equation}
[\xi>t]=\menge{\omega\in\Omega}{\xi(\omega)>t};
\end{equation}
the sets $[\xi<t]$, $[\xi\geq t]$, and $[\xi\leq t]$
are defined analogously.
Let $(\CU_n)_{n\in\NN}$ and $U$ be functions from $\Omega$ to 
$\B$. The sequence $(\CU_n)_{n\in\NN}$ converges 
\emph{in $\PO$-outer probability} to 
$\CU$, in symbols $\CU_n\overset{\PO^*}{\to}\CU$, if
\cite{Vandervaart96}
\begin{equation}
(\forall\varepsilon\in\RPP)\quad
\PO^*\big[\norm{\CU_n-\CU}>\varepsilon \big]\to 0,
\end{equation}
and it converges
$\PO^*\!\verb0-0\text{almost surely (a.s.)}$ to $U$ if
\begin{equation}
(\exi \Omega_0 \subset \Omega)\quad\PO^*\Omega_0=0
\quad\text{and}\quad
(\forall \omega\in \Omega\smallsetminus\Omega_0)\quad
U_n(\omega)\to U(\omega).
\end{equation}
The probability space $(\Omega,\mathfrak{A}, \PO)$ is 
\emph{complete} if, for every $A\in\mathfrak{A}$ such 
that $\PO(A)=0$, and every $B\subset A$, we have $B\in\mathfrak{A}$.

\section{Learning in Banach spaces}
\label{sec:LBs}
Basic tools such as feature maps, reproducing kernel Hilbert spaces, 
and representer theorems have played an instrumental role in the
development of Hilbertian learning theory 
\cite{Hofm08,Scho01,SteChi2008}. 
In recent years, there has been a marked interest in 
extending these tools to Banach spaces; see for instance
\cite{Fass15,Zhang2009,Zhang2013} 
and references therein. The primary objective of this section 
is to further develop the theory on these topics.

\subsection{Banach spaces of vector-valued functions and 
feature map representations}
\label{subsec:fmaps}

Sampling based nonparametric estimation naturally calls for
formulations involving spaces of functions for which the pointwise
evaluation operator is continuous. In the Hilbert space setting,
this framework hinges on the notions of a reproducing kernel
Hilbert space and of a feature map, which have been extensively
investigated, e.g., in \cite{CarDevToi06,SteChi2008}.  
On the other hand, the study of reproducing kernel Banach spaces
has been developed primarily in \cite{Zhang2009,Zhang2013}. 
However,  in the Banach space
setting, the continuity of the pointwise evaluation operators, the
existence of a kernel, and the existence of a feature map may no
longer be equivalent and further investigation is in order. Towards
this goal, we start with the following proposition which extends
\cite[Proposition~2.4]{CarDevToi06}.  

\begin{proposition}
\label{prop:feature_rkbs} 
Let $\XC$ be a nonempty set, let $\YY$ and $\FF$ be separable
real Banach spaces, and let $A\colon\FF\to\YY^{\XC}$ be a linear 
operator. Then the following are equivalent:
\vspace{-2ex}
\begin{enumerate}
\setlength{\itemsep}{1pt} 
\item
\label{item2:feature_rkbs} 
$A\colon\FF\to\YY^{\XC}$ is continuous for the topology of 
pointwise convergence on $\YY^{\XC}$. 
\item
\label{item1:feature_rkbs}
There exists a map $\fmap\colon \XC\to \LL(\YY^*,\FF^*)$ such that
\begin{equation}
\label{eq:fmap}
(\forall u\in\FF)(\forall x\in \XC) \quad (Au)(x)=\fmap(x)^*u.
\end{equation}
\item
\label{item3:feature_rkbs} 
$\ran A$ can be endowed with a Banach space structure such that 
the point-evaluation operators on $\ran A$ are continuous,
$A \colon \FF \to \ran A$ is continuous, and
the quotient operator of 
$A$ is a Banach space isometry from $\FF/\ker A$ onto $\ran A$. 
\end{enumerate}
\end{proposition}
\begin{proof} 
Set $\W=\ran A$ and $\EuScript{N}=\ker A$.
Let  
$\pi_\EuScript{N}\colon\FF\to \FF/\EuScript{N} \colon u\mapsto u+\EuScript{N}$ be the 
canonical projection operator and let
$\tilde{A}\colon\FF/\EuScript{N}\to \YY^{\XC}$ be the unique linear map such
that $A=\tilde{A}\circ \pi_\EuScript{N}$. Then $\tilde{A}$ is injective 
and $\ran \tilde{A}=\ran A$. Moreover, 
for every $x\in\XC$, we define the point-evaluation operator 
$\ev_x\colon\W\to\YY\colon f\mapsto f(x)$. We recall that
$A$ is continuous for the topology of 
pointwise convergence on $\YY^{\XC}$ if and only if,
for every $x \in \XC$, $\ev_x \circ A \colon \FF \to \YY$ is continuous.

\ref{item2:feature_rkbs}$\Rightarrow$\ref{item1:feature_rkbs}: 
Set $\fmap\colon\XC\to\LL(\YY^*,\FF^*)\colon x\mapsto
(\ev_x\circ A)^*$.

\ref{item1:feature_rkbs}$\Rightarrow$\ref{item2:feature_rkbs}: 
Let $x\in\XC$. Then, by \eqref{eq:fmap}, $\ev_x\circ A=\fmap(x)^*$
is continuous.

\ref{item2:feature_rkbs}$\Rightarrow$\ref{item3:feature_rkbs}: 
Since $\EuScript{N}$ is a closed vector subspace of $\FF$, the
quotient space
$\FF/\EuScript{N}$ is a Banach space with the quotient norm 
$\pi_{\EuScript{N}}u\mapsto
\norm{\pi_{\EuScript{N}} u}_{\FF/\EuScript{N}}=\inf_{v\in \EuScript{N}}\norm{u-v}$.
Thus, we  endow $\W$ with the Banach 
space structure transported from $\FF/\EuScript{N}$ by $\tilde{A}$,
i.e., for every $u\in \FF$,
$\norm{Au}=\norm{\tilde{A}\pi_\EuScript{N} u}
=\norm{\pi_\EuScript{N} u}_{\FF/\EuScript{N}}$.
Denote by $\abs{\cdot}$ the norm of $\YY$. Let $x\in\XC$ and
$f\in\W$.  
Then there exists $u\in \FF$ such
that $f=A u$, and hence $(\forall v\in\EuScript{N})$
$\abs{f(x)}=\abs{(\ev_x\circ A)(u+v)} \leq
\norm{\ev_x\circ A}\,\norm{u+v}.$ Taking the infimum over
$\EuScript{N}$, and recalling the definition of the quotient norm, we
get $\abs{f(x)}\leq \norm{\ev_x\circ A}\norm{\pi_\EuScript{N}u}_
{\FF/\EuScript{N}}=\norm{\ev_x\circ A}\norm{f}$.
Hence, $\ev_x\colon\W\to\YY$ is continuous. 
Finally, $A\colon \FF \to \W$ is continuous since 
$A=\tilde{A}\circ \pi_\EuScript{N}$.

\ref{item3:feature_rkbs}$\Rightarrow$\ref{item2:feature_rkbs}: 
Let $x\in\XC$. Since $A\colon\FF \to \W$ is continuous and
$\ev_x\colon\W\to\YY$ is continuous,  $\ev_x \circ
A\colon \FF \to \YY$ is likewise. 
\end{proof}

\begin{definition}
\label{def:fmap}
In the setting of Proposition~\ref{prop:feature_rkbs}, if
$A$ is continuous for the topology of 
pointwise convergence on $\YY^\XC$, then the unique map $\fmap$
defined in \ref{item1:feature_rkbs} is the \emph{feature map} 
associated with $A$ and $\FF$ is the feature space.
\end{definition}

\begin{definition}\label{def:RKBS}
Let $\XC$ be a nonempty set and let $\YY$ be a separable real
Banach space. Let $\W$ be a real
Banach space of functions from $\XC$ to $\YY$. Then 
\vspace{-1ex}
\begin{enumerate}
\item\label{def:RKBSi} $\W$ is a
\emph{pre-reproducing kernel Banach space} if, for every $x \in
\XC$, the point-evaluation operator $\ev_x\colon\W\to\YY\colon
f\mapsto f(x)$ is continuous \cite{Song13}.
\item\label{def:RKBSii}  $\W$
is a \emph{reproducing kernel Banach space} if it is a reflexive, strictly convex, and smooth 
pre-reproducing kernel Banach space.
\end{enumerate}
\end{definition}

\begin{remark}\ 
\label{rmk:featuremeas}
\begin{enumerate}
\item 
Proposition~\ref{prop:feature_rkbs} establishes that 
pre-reproducing kernel Banach spaces can always be built
via feature map representations.
We note that pre-reproducing kernel Banach spaces are called
\emph{function Banach spaces} in \cite{Bur84}.

\item
\label{rmk:featuremeasii}
Equation \eqref{eq:fmap} is equivalent to
\begin{equation}\label{eq:fmap2}
(\forall u\in\FF)(\forall x\in\XC)(\forall\ww^*\in\YY^*)\quad 
\pair{u}{\fmap(x)\ww^*}=\pair{(Au)(x)}{\ww^*},
\end{equation}
which shows that $A$ is injective if and only if 
$\big\{\fmap(x)\ww^*\,\big\vert\, x\in\XC,\,
\ww^*\in\YY^*\big\}$
is dense in $\FF^*$. Note that
this last denseness condition (hence the injectivity of $A$) is usually required in the
the current literature on reproducing kernel Banach 
spaces \cite{Zhang2009,Zhang2012,Zhang2013}.
We do not need this assumption.
\end{enumerate}
\end{remark}

\begin{proposition}
\label{pZhd7rT9-03}
Let $(\XC,\mathfrak{A}_\XC,\mu)$ be a $\sigma$-finite measure
space, let $\YY$ and $\FF$ be separable real Banach spaces, 
let $A\colon\FF\to\YY^{\XC}$ be linear and continuous for 
the topology of pointwise convergence on $\YY^{\XC}$, and let
$\fmap\colon\XC\to\LL(\YY^*,\FF^*)$ be the associated feature map.
Then the following hold:
\vspace{-2ex}
\begin{enumerate}
\setlength{\itemsep}{1pt} 
\item
\label{pZhd7rT9-03i}
$\fmap\colon\XC\to\LL(\YY^*,\FF^*)$ is strongly measurable if 
and only if $\ran A\subset\MC(\XC,\YY)$. 
\item
\label{pZhd7rT9-03ii}
Let $p\in[1,+\infty]$ and suppose that 
$\fmap\in L^p[\XC,\mu;\LL(\YY^*,\FF^*)]$. Then
$\ran A\subset L^p(\XC,\mu;\YY)$ and,
for every $u\in\FF$, $\norm{A u}_p
\leq\norm{\fmap}_{p}\norm{u}$.
\end{enumerate}
\end{proposition}
\begin{proof}
\ref{pZhd7rT9-03i}:
It follows from Pettis' theorem \cite[Theorem~II.2]{DieUhl77} 
and \eqref{eq:fmap2} that $\fmap\colon\XC\to\LL(\YY^*,\FF^*)$ 
is strongly measurable if and only if, for every $u\in\FF$, 
$Au$ is measurable. 

\ref{pZhd7rT9-03ii}:
Let $u\in\FF$ and note that, by \ref{pZhd7rT9-03i}, $Au$ is 
measurable. Moreover, by \eqref{eq:fmap}, $(\forall x\in\XC)$ 
$\abs{(Au)(x)}=\abs{\fmap(x)^*u}\leq\norm{\fmap(x)}\,\norm{u}$. 
\end{proof}

We now define a notion of universality
for spaces of vector-valued functions \cite{Capo2008,CarDevToiUma10}
with respect to a constraint set.

\begin{definition}
\label{def:universal}
Let $(\XC,\mathfrak{A}_\XC)$ be a measurable space, let $\YY$ 
be a separable uniformly convex real Banach space, and let 
$\W$ be a vector space of bounded measurable functions from 
$\XC$ to $\YY$.
Let $\C\subset \mathcal{M}(\XC;\YY)$ be a convex set.
\vspace{-2ex}
\begin{enumerate}
\setlength{\itemsep}{1pt} 
\item
\label{def:infuniversal}
$\W$ is $\infty\verb0-0$\emph{universal relative to} $\C$ if, 
for every probability measure $\mu$ 
on $(\XC, \mathfrak{A}_\XC)$ and for every 
$f \in\C\cap L^\infty(\XC, \mu; \YY)$, there exists 
$(f_n)_{n \in\NN}\in(\C\cap \W)^\NN$ such that 
$\sup_{n \in\NN} \norm{f_n}_{\infty}<+\infty$ and $f_n \to f$
$\mu\verb0-0\text{a.e.}$
\item
\label{def:puniversal}
Let $p\in\left[1, +\infty\right[$. The space $\W$ is
$p\verb0-0$\emph{universal relative to} $\C$ if, 
for every probability measure $\mu$ on $(\XC,
\mathfrak{A}_\XC)$, $\C\cap\W$ 
is dense in $\C\cap L^p(\XC,\mu;\YY)$.
\end{enumerate}
\vspace{-2ex}
When $\C=\mathcal{M}(\XC;\YY)$ the
reference to the set $\C$ is omitted.
\end{definition}

\begin{definition}
\label{def:fsdH612h23a}
Let $(\YY, \abs{\cdot})$ be a real normed vector space. The 
Attouch-Wets topology \cite{Atto91,Beer93} on the class $\mathscr{C}_\YY$ of 
nonempty closed subsets of $\YY$ is that induced by the following family of 
pseudometrics
\begin{equation}
(\forall\,\rho \in \RPP)(\forall\, (\mathsf{C}_1,\mathsf{C}_2) \in \mathscr{C}^2_\YY)
\quad  \mathrm{dist}_\rho(\mathsf{C}_1, \mathsf{C}_2)
= \sup_{\abs{\ww}\leq \rho} \abs{d_{\mathsf{C}_1}(\ww) - d_{\mathsf{C}_2}(\ww)},
\end{equation}
where $d_{\mathsf{C}}(\ww) = \inf_{\mathsf{y} \in \mathsf{C}} \abs{\mathsf{y} - \ww}$
is the distance function to the set $\mathsf{C}$.
\end{definition}

The following proposition shows that Definition~\ref{def:universal}
is an extension of the standard notion of universality in the context
of reproducing kernel Hilbert spaces \cite{CarDevToiUma10,Mic2006,SteChi2008}.

\begin{theorem}
\label{prop:C-universality}
Let $(\XC, \mathfrak{A}_\XC)$ be a measurable space, let $\YY$ be a
separable uniformly convex real Banach space, and let $\W$ be a
vector space of bounded measurable functions from $\XC$ to $\YY$.
Let $(\mathsf{C}(x))_{x\in\XC}$ be a family of closed convex 
subsets of $\YY$ containing $0$, let $\C=\menge{f\in \MC(\XC, \YY)}{
(\forall x\in\XC)\;f(x)\in\mathsf{C}(x)}$,
and let $p\in\left[1,\pinf\right[$.
Consider the following properties:
\vspace{-2ex}
\begin{enumerate}
\setlength{\itemsep}{1pt} 
\item[(a)]
$\W$ is $\infty\verb0-0$universal relative to $\C$.
\item[(b)]
$\W$ is $p\verb0-0$universal relative to $\C$.
\end{enumerate}
\vspace{-1.5ex}
Then the following hold:
\vspace{-2ex}
\begin{enumerate}
\setlength{\itemsep}{1pt} 
\item
\label{prop:C-universalityi}
Suppose that $x\mapsto \mathsf{C}(x)$ is measurable \cite{Cast77}.
Then (a)$\Rightarrow$(b).
\item
\label{prop:C-universalityii} 
Suppose that $\XC$ is a locally compact Hausdorff space and let 
$\mathscr{C}_0(\XC;\YY)$ be the space of continuous functions from
$\XC$ to $\YY$ vanishing at infinity \cite{Bour65}.
Suppose that $\W\subset\mathscr{C}_0(\XC;\YY)$ and 
that $x\mapsto \mathsf{C}(x)$ is continuous with respect to the 
Attouch-Wets topology.
Consider the following property:
\begin{enumerate}
\setlength{\itemsep}{1pt} 
\item[(c)]
$\C\cap\W$ is dense in $\mathcal{C}\cap\mathscr{C}_0(\XC;\YY)$ for
the uniform topology.
\end{enumerate}
\vspace{-0.5ex}
Then (a)$\Leftrightarrow$(b)$\Leftrightarrow$(c).
\end{enumerate}
\end{theorem}
\vspace{0.5ex}
\begin{proof}
\ref{prop:C-universalityi}:
Suppose that (a) holds and let $\mu$ be a 
probability measure on $(\XC, \mathfrak{A}_\XC)$.
We have $\W \subset L^\infty(\XC, \mu; \YY)$.
We derive from (a) and the 
dominated convergence theorem that $\C\cap \W$ is dense in 
$\C\cap L^\infty(\XC, \mu; \YY)$ for the topology of $L^p(\XC,
\mu; \YY)$. Next, let $f\in\C\cap L^p(\XC, \mu; \YY)$ and
let $\epsilon\in\RPP$. Since $L^\infty(\XC, \mu; \YY)$ is dense in
$L^p(\XC, \mu; \YY)$ for the topology of $L^p(\XC, \mu; \YY)$,
there exists $g\in L^\infty(\XC, \mu; \YY)$ such that
$\norm{f-g}_p\leq\epsilon/2$. The function
\begin{equation}
\Pj_{\mathsf{C}} (g)\colon \XC \to \YY\colon x \mapsto
\Pj_{\mathsf{C}(x)}(g(x))
\end{equation}
is well defined \cite[Proposition~3.2]{Goeb84} and its 
measurability follows from the application of 
\cite[Lemma~III.39]{Cast77} 
with $\varphi\colon\XC\times \YY \to \RR\colon (x,y) \mapsto 
-|y-g(x)|$ and $\Sigma=\mathsf{C}\colon \XC \to 2^\YY$.
Then $\Pj_{\mathsf{C}}(g)\in\C$ and, for every $x \in\XC$, 
since $\{0,f(x)\}\subset\mathsf{C}(x)$, 
\begin{equation}
\begin{cases}
\abs{\Pj_{\mathsf{C}(x)}(g(x))}
\leq \abs{\Pj_{\mathsf{C}(x)}(g(x))-g(x)}+\abs{g(x)}\leq2
\abs{g(x)} \\
\abs{\Pj_{\mathsf{C}(x)}(g(x))-f(x)} 
\leq \abs{\Pj_{\mathsf{C}(x)}(g(x))-g(x)} +\abs{g(x)-f(x)}
\leq 2 \abs{g(x)-f(x)}.
\end{cases}
\end{equation}
Therefore $\Pj_{\mathsf{C}}(g)\in L^\infty(\XC, \mu; \YY)$ and
$\norm{\Pj_{\mathsf{C}}(g) -f}_p\leq2 \norm{f-g}_p \leq
\epsilon$.

\ref{prop:C-universalityii}: 
(c)$\Rightarrow$(a): Let $\mu$ be a probability measure on 
$(\XC,\mathfrak{A}_\XC)$ and let $f\in\C\cap L^\infty(\XC,\mu;\YY)$. 
We denote by $\mathscr{K}(\XC;\YY)$ the space of continuous
functions from $\XC$ to $\YY$ with compact support. Since 
$\XC$ is completely regular, we derive from Lusin's theorem 
\cite[Corollary~1 in III.\S15.8]{Dincu1967} and Urysohn's lemma, 
that there exists a sequence $(g_n)_{n\in\NN}$ in
$\mathscr{K}(\XC;\YY)$ such that $g_n \to f$
$\mu\verb0-0\text{a.e.}$ and $\sup_{n\in\NN}\norm{g_n}_\infty
\leq \norm{f}_\infty$.  Let $n\in\NN$ and define the function
$\Pj_{\mathsf{C}} (g_n)\colon \XC \to \YY\colon x \mapsto
\Pj_{\mathsf{C}(x)}(g_n(x))$.
Let us prove that $\Pj_{\mathsf{C}} (g_n)$ is continuous. 
Let $x_0\in\XC$. Since $\lim_{x \to x_0} \mathsf{C}(x)=
\mathsf{C}(x_0)$ in the Attouch-Wets topology, there
exist a neighborhood $U_1$ of $x_0$ and $t\in\RPP$ such that,
for every $x\in U_1$, $\inf \abs{\mathsf{C}(x)}<t$. Moreover
there exist a neighborhood $U_2$ of $x_0$ and $q\in\RPP$ such
that, for every $x\in U_2$, $g_n(x)\in B(q)$. Now, fix 
$r\in\left[3q+t,\pinf\right[$. Then, for every $x\in U_1\cap U_2$,
since $r\geq3q+\inf \abs{\mathsf{C}(x)}$, it follows from
\cite[Corollary~3.3 and Theorem~4.1]{Peno05} that
\begin{align}
\label{eq:PCgcont}
&\hskip-6mm
\abs{\Pj_{\mathsf{C}} (g_n)(x)-\Pj_{\mathsf{C}} (g_n)(x_0)} 
\nonumber\\
&\leq \abs{\Pj_{\mathsf{C}(x)}(g_n(x))-
\Pj_{\mathsf{C}(x)}(g_n(x_0))} +
\abs{\Pj_{\mathsf{C}(x)}(g_n(x_0))-
\Pj_{\mathsf{C}(x_0)}(g_n(x_0))} 
\nonumber\\
&\leq\phi^{\natural}\big(2r\abs{g_n(x)-g_n(x_0)}\big)+
\abs{g_n(x)-g_n(x_0)}
+\phi^\natural(2r\,\mathrm{dist}_{2q+t}(\mathsf{C}(x), \mathsf{C}(x_0))),
\end{align}
where $\phi\in\mathcal{A}_0$ is the modulus of uniform
monotonicity of the normalized duality map of $\YY$ on $B(r)$, and,
for every $\rho\in\RPP$, $\mathrm{dist}_\rho$ is as in 
Definition~\ref{def:fsdH612h23a}.  
Hence, since $\lim_{x \to x_0} \mathrm{dist}_{2q +
t} (\mathsf{C}(x), \mathsf{C}(x_0))=0$, $\lim_{x\to x_0}
\abs{g_n(x)-g_n(x_0)}=0$, and $\lim_{s\to 0^+}\phi^\natural(s)=0$ 
by Proposition~\ref{prop:psinatural}\ref{psinat_increasing},
the continuity of $\Pj_{\mathsf{C}} (g_n)$ at $x_0$ follows.
In addition, since $0\in\bigcap_{x\in\XC}\mathsf{C}(x)$, the
support of $\Pj_{\mathsf{C}} (g_n)$ is contained in that of $g_n$.
Therefore, for every $n\in\NN$, $\Pj_{\mathsf{C}} (g_n)\in\C
\cap\mathscr{K}(\XC;\YY)$, $\norm{\Pj_{\mathsf{C}} (g_n)}_\infty
\leq 2 \norm{g_n}_\infty$ and, $(\forall x\in\XC)$
$\abs{\Pj_{\mathsf{C}(x)}( g_n(x))-f(x)}\leq2 \abs{g_n(x)-
f(x)}$. Hence
$\Pj_{\mathsf{C}} (g_n) \to f$ $\mu\verb0-0\text{a.e.}$
It follows from (c) that, for every $n\in\NN$, there exists 
$f_n\in\C\cap\W$ such that 
$\norm{f_n-\Pj_{\mathsf{C}} (g_n)}_\infty\leq1/(n+1)$. Therefore 
$\sup_{n\in\NN} \norm{f_n}_{\infty}\leq\sup_{n\in\NN}
(1+\norm{\Pj_{\mathsf{C}} (g_n)}_\infty)\leq1+2 \norm{f}_\infty$
and $f_n \to f$ $\mu\verb0-0\text{a.e.}$

(b)$\Rightarrow$(c):
We follow the same reasoning as in the proof of
\cite[Theorem~4.1]{CarDevToiUma10}. By contradiction, suppose that
$\C\cap \W$ is not dense in $\C\cap \mathscr{C}_0(\XC;\YY)$.
Since $\C\cap \W$ is nonempty and convex, by the Hahn-Banach
theorem, there exists $f_0\in\C\cap \mathscr{C}_0(\XC;\YY)$ and
$\varphi\in\mathscr{C}_0(\XC;\YY)^*$, and $\alpha\in\RR$ such
that 
\begin{equation}
(\forall f \in\C\cap\W)\quad\varphi(f)<\alpha<\varphi(f_0).
\end{equation}
Now, by \cite[Corollary~2 and Theorem~5 in III.\S19.3]{Dincu1967}
there is a probability measure $\mu$ on $\XC$ and a function $h \in
L^\infty(\XC, \mu; \YY^*)$ such that 
\begin{equation}
(\forall\, f\in\mathcal{C}_0(\XC;\YY))\quad \varphi(f)=\int_\XC \pair{f(x)}{h(x)} \ud \mu(x)\,.
\end{equation}
Since $\varphi \neq 0$, we have $h \neq 0$. Moreover $h \in
L^{p^*}(\XC, \mu; \YY^*)$. Therefore
\begin{equation}
\label{eq:separation}
(\forall\, f\in\C\cap \W)\quad 
\pair{f}{h}_{p,p^*}<\alpha<\pair{f_0}{h}_{p,p^*}.
\end{equation}
Let
$H^\alpha_-=\{ f\in L^{p}(\XC, \mu; \YY) \,\vert\,
\pair{f}{h}_{p,p^*}\leq\alpha \}$.
Then $H^\alpha_-$ is a closed half-space of $L^{p}(\XC, \mu; \YY)$.
Therefore, by \eqref{eq:separation}, $\overline{\C\cap \W}
\subset H^\alpha_-$ and $f_0 \notin H^\alpha_-$. 
Hence, $\C\cap \W$ is not dense in $\C\cap L^{p}(\XC, \mu; \YY)$.
\end{proof}

\begin{remark}
The Attouch-Wets topology considered in the statement of Theorem~\ref{prop:C-universality}
is also called bounded Hausdorff topology and is in fact a generalization
of the Hausdorff topology to non-compact sets.
\end{remark}

In the next proposition we show that in the Banach space setting,
the duality map (see Section~\ref{sec:notation}) is instrumental to
properly define a kernel. This will require the involved Banach spaces
to satisfy additional geometric properties.

\begin{proposition}
\label{prop:RKBS}
Under the assumptions of Proposition~\ref{prop:feature_rkbs},
let $\fmap\colon \XC\to \LL(\YY^*,\FF^*)$ be defined by
\eqref{eq:fmap} and set $\W=\ran A$. 
Let $\mathscr{B}(\YY^*,\YY)$ be the set of operators 
mapping bounded subsets of $\YY^*$ into bounded subsets of $\YY$.
Suppose that $\FF$ is reflexive, strictly convex, and smooth,
and let $p \in \left]1,+\infty\right[$.
Then $\W$ is a reproducing kernel Banach space and
there exists a unique 
$K_p\colon\XC\times\XC\to \mathscr{B}(\YY^*,\YY)$, called
kernel, such that
\begin{equation}
\label{eq:repformula}
(\forall\, u\in\FF)(\forall\, x\in\XC)(\forall\, y^* \in
\YY^*)\quad 
\begin{cases}
K_p(x,\cdot)y^*\in\W\\
\pair{Au}{J_{\W,{p}}(K_p(x,\cdot)y^*)}=\pair{(Au)(x)}{y^*}.
\end{cases}
\end{equation}
Moreover, we have
\begin{equation}\label{eq:kernel}
(\forall\, x\in\XC)(\forall\, x^\prime\in\XC)\quad
K_p(x,x^\prime)=
\fmap(x^\prime)^*\circ  J_{\FF,{p}}^{-1} \circ \fmap(x).
\end{equation}
\end{proposition}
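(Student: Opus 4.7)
The plan is to (1) verify that $\W$ is a reproducing kernel Banach space in the sense of Definition~\ref{def:RKBS}; (2) exhibit the kernel through formula \eqref{eq:kernel} and verify the reproducing identity \eqref{eq:repformula}; and (3) obtain uniqueness from the bijectivity of $J_{\W,p}$. The first stage is largely bookkeeping: by Proposition~\ref{prop:feature_rkbs}, the quotient operator $\tilde{A}\colon\FF/\ker A\to\W$ is an isometric isomorphism and each $\ev_x\colon\W\to\YY$ is continuous. Since $\FF$ is reflexive, so is the quotient $\FF/\ker A$, hence so is $\W$. Strict convexity and smoothness transfer from $\FF$ to the quotient via the bijection between cosets and their minimum-norm representatives (well-defined by the reflexivity and strict convexity of $\FF$), so $J_{\W,p}\colon\W\to\W^*$ is single-valued and bijective.

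The computational heart of the argument is the construction of $K_p(x,\cdot)y^*$. I set $v:=J_{\FF,p}^{-1}(\fmap(x)y^*)\in\FF$ and $K_p(x,\cdot)y^*:=A v\in\W$. Applying \eqref{eq:fmap} at an arbitrary $x'\in\XC$ yields $(A v)(x')=\fmap(x')^{*}J_{\FF,p}^{-1}(\fmap(x)y^*)$, which is exactly \eqref{eq:kernel}. For the reproducing identity, the key observation is that $J_{\FF,p}(v)=\fmap(x)y^*\in(\ker A)^{\perp}$; indeed, \eqref{eq:fmap2} gives $\pair{w}{\fmap(x)y^*}=\pair{(Aw)(x)}{y^*}=0$ for every $w\in\ker A$. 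Hence $v$ is the minimum-norm representative of its coset $[v]\in\FF/\ker A$. Under the identification $(\FF/\ker A)^{*}\cong(\ker A)^{\perp}\subset\FF^*$ and the standard transport rule for duality maps under the isometry $\tilde A$, namely $J_{\W,p}\circ\tilde A=(\tilde A^*)^{-1}\circ J_{\FF/\ker A,p}$, the minimum-norm property yields $J_{\FF/\ker A,p}([v])=J_{\FF,p}(v)=\fmap(x)y^*$. Chasing these identifications, for every $u\in\FF$,
\begin{equation*}
\pair{Au}{J_{\W,p}(K_p(x,\cdot)y^*)}=\pair{u}{\fmap(x)y^*}=\pair{(Au)(x)}{y^*},
\end{equation*}
which is \eqref{eq:repformula}.

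Uniqueness follows at once: since $J_{\W,p}$ is a bijection from $\W$ onto $\W^{*}$, \eqref{eq:repformula} determines $K_p(x,\cdot)y^*\in\W$ uniquely for every $(x,y^*)$, so evaluation at $x'\in\XC$ determines $K_p(x,x')y^*$, and varying $y^*\in\YY^*$ pins down $K_p(x,x')$. Membership in $\mathscr{B}(\YY^{*},\YY)$ is then clear from \eqref{eq:kernel}: $\fmap(x)$ is bounded linear, $J_{\FF,p}^{-1}$ sends bounded subsets of $\FF^{*}$ to bounded subsets of $\FF$ (by the $(p^*-1)$-homogeneous norm identity), and $\fmap(x')^*$ is bounded linear. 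The main obstacle is the duality bookkeeping in the quotient: the argument hinges on correctly identifying $J_{\FF/\ker A,p}$ through minimum-norm representatives and on tracking how $J_{\W,p}$ transports along the isometry $\tilde A$.
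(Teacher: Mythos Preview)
Your proof is correct and follows essentially the same route as the paper: define $K_p$ via \eqref{eq:kernel}, set $v=J_{\FF,p}^{-1}(\fmap(x)y^*)$ so that $K_p(x,\cdot)y^*=Av$, and verify \eqref{eq:repformula} by tracking the duality maps through the isometry $\tilde A\colon\FF/\ker A\to\W$. Your explicit observation that $J_{\FF,p}(v)=\fmap(x)y^*\in(\ker A)^{\perp}$ forces $v$ to be the minimum-norm representative of its coset is in fact a cleaner justification of the key step than the paper's compressed identity $J_{\FF,p}=\pi_{\EuScript N}^{*}\circ J_{\FF/\EuScript N,p}\circ\pi_{\EuScript N}$, which as written only holds on such representatives; both arguments land on the same computation $A^{*}J_{\W,p}(Av)=\fmap(x)y^*$.
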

\begin{proof}
Let $\EuScript{N}=\ker A$.
Proposition \ref{prop:feature_rkbs} implies that $\W$ is 
isometrically isomorphic to $\FF/\EuScript{N}$.
Define 
\begin{equation}
K_{p}\colon\XC\times\XC \to \mathscr{B}(\YY^*,\YY)\colon
(x,x^\prime)\mapsto 
\fmap(x^\prime)^*\circ  J_{\FF,{p}}^{-1} \circ \fmap(x)\,.
\end{equation}
Then \eqref{eq:fmap} yields
\begin{equation}
\label{eq:repformula0}
(\forall\, x\in\XC)(\forall\, y^*\in\YY^*)\quad
K_p(x,\cdot)y^*=A J_{\FF,{p}}^{-1} (\fmap(x) y^*).
\end{equation}
Since $\FF$ is reflexive, strictly convex, and smooth, 
$\FF/\EuScript{N}$ and $\W$ are likewise.  
Defining $\tilde{A}$ and $\pi_{\EuScript{N}}$ as
in the proof of Proposition~\ref{prop:feature_rkbs}, we have 
$\tilde{A}^*\circ J_{\W,{p}}\circ\tilde{A}=J_{\FF/\EuScript{N},{p}}$ 
and $ J_{\FF,{p}}=\pi_{\EuScript{N}}^*\circ  J_{\FF/\EuScript{N},{p}}
\circ \pi_\EuScript{N}$. 
Hence, $A^* \circ  J_{\W,{p}} \circ A= J_{\FF,{p}}$.
Therefore, it follows from \eqref{eq:repformula0} and
\eqref{eq:fmap} that, for  every $(x,u)\in\XC\times\FF$,
\begin{align}
\label{eq:repformula2}(\forall\, y^*\in\YY^*)\quad 
\pair{Au}{ J_{\W,p}(K_p(x,\cdot)y^*)} 
&=\big\langle A u,J_{\W,p}(A J_{\FF,p}^{-1} (\fmap(x) y^*))\big\rangle\\
\nonumber&=\pair{u}{\fmap(x) y^*}\\
&=\pair{(Au)(x)}{y^*}.
\end{align}
Finally if a kernel satisfies \eqref{eq:repformula}, it satisfies
\eqref{eq:repformula2} and hence \eqref{eq:repformula0}, and thus 
coincides with $K_p$.
\end{proof}

\begin{remark}\ 
\vspace{-2ex}
\begin{enumerate}
\setlength{\itemsep}{1pt} 
\item
Equation \eqref{eq:repformula} is a representation formula, meaning
that the values of the functions in $\W$ can be computed in terms
of the kernel $K_{p}$, which is said to be associated with 
the feature map $\fmap$.
\item
Definition~\ref{def:RKBS}\ref{def:RKBSii} is more general than
\cite[Definition~2.2]{Zhang2013}, since the latter requires that
both $\FF$ and $\YY$ be uniformly convex and uniformly smooth.
Thus, Proposition~\ref{prop:RKBS} extends 
\cite[Theorems~2.3 and 3.1]{Zhang2013}. To this respect, we note also that
what is essential to properly define a kernel is that the 
duality map is single valued and bijective, and this is equivalent to require 
strict convexity and smoothness only.
Moreover,
in Proposition~\ref{prop:RKBS}, the kernel is built from a 
feature map, a general $p$-duality map, and without any density assumption
(see Remark~\ref{rmk:featuremeas}\ref{rmk:featuremeasii}), which results in a
more general setting than that of \cite{Zhang2009,Zhang2013}. 
Finally, we emphasize that, when dealing with kernels in Banach spaces, 
there is no reason to restrict oneself to the normalized duality 
map. Rather, allowing general $p$-duality maps usually makes the 
computation of the kernel easier, as the following two examples show.
\end{enumerate}
\end{remark}
 
\begin{remark}
\label{scalarkernel}
In the setting of Proposition \ref{prop:RKBS},
consider the scalar case $\YY=\RR$ \cite{Zhang2009}. 
Then, for every $x\in\XC$, $\fmap(x)^*\in\FF^*$ and
the kernel becomes
\begin{equation}
\label{eq:scal_kerneldef}
K_{p}\colon\XC\times\XC\to\RR\colon(x,x^\prime) 
\mapsto \big\langle J_{\FF,{p}}^{-1}(\fmap(x)^*),\fmap({x^\prime})^*\big\rangle.
\end{equation}
Moreover, for every $x\in\XC$, $K_{{p}}(x,\cdot)=A [
J_{\FF,p}^{-1}(\fmap(x)^*)]$, and formula
\eqref{eq:repformula} turns into
\begin{equation}
(\forall\,u\in\FF)(\forall\,x\in\XC) \quad\pair{Au}{
J_{\W,{p}}(K(x,\cdot))}=(Au)(x)\,.
\end{equation}
It follows from the definitions of $K_{p}$ and $ J_{\FF,p}$ that
\begin{equation}
(\forall\, (x,x^\prime)\in\XC \times \XC) \quad
K_{{p}}(x,x)=\norm{\fmap(x)}^{p^*}\quad\text{and}\quad
\abs{K_{{p}}(x,x^\prime)}\leq K_{p}(x,x)^{{1/p}} 
K_{{p}}(x^\prime,x^\prime)^{{1/p^*}}.
\end{equation}
\end{remark}
 
\begin{example}[generalized linear model]
\label{ex:dictionary}
Let $\XC$ be a nonempty set, let $\YY$ be a separable
real Banach space with norm $\abs{\cdot}$, let $\mathbb{K}$ be a
nonempty countable set, 
let $r\in\left[1,+\infty\right[$. Let 
$(\dictfunc_k)_{k\in\KK}$ be a family of functions from 
$\XC$ to $\YY$, which, in this context, is
usually called a \emph{dictionary} \cite{Demo09,Stei09}. 
Assume that for every $x\in
\XC$, $(\dictfunc_k(x))_{k\in\KK}\in
l^{r^*}(\KK; \YY)$ and denote by
$\norm{(\dictfunc_k(x))_{k\in\KK}}_{r^*}$ its norm in
$l^{r^*}(\KK; \YY)$. 
Set 
\begin{equation}
A\colon l^{r}(\KK)\to\YY^{\XC}\colon u=(\mu_k)_{k\in\KK}
\mapsto\sum_{k\in\KK}\mu_k\dictfunc_k\; \text{(pointwise)}.
\end{equation}
Let $x\in\XC$.
By H\"older's inequality we derive that, for every $u\in\FF$,
$\abs{(A u) (x)}\leq
\norm{u}_r \norm{(\dictfunc_k(x))_{k\in\KK}}_{r^*}$,
which implies that $\ev_x \circ A$ is continuous. Therefore,
Proposition~\ref{prop:feature_rkbs} ensures that
\begin{equation}
\ran A=\Menge{f\in\YY^{\XC}}{\big(\exists\, u
\in l^{r} (\KK)\big)(\forall\, x\in\XC)\quad f(x)=\sum_{k\in
\KK} \mu_k \dictfunc_k(x)}
\end{equation}
can be endowed with a Banach space structure for which the
point-evaluation operators are continuous. Moreover
\begin{equation}
\ker A=\Menge{u\in l^{r}(\KK)}{(\forall\,x\in\XC)\quad
\sum_{k\in\KK}\mu_k\dictfunc_k(x)=0}
\end{equation}
and, for every $u\in l^r(\KK)$, 
$\norm{A u}=\inf_{v\in\ker A}\norm{u-v}_{r}$. Hence, for
every $f\in\ran A$,
\begin{equation}
\norm{f}=\inf \Menge{\norm{u}_r}{ u\in
l^{r}(\KK) \;\:\text{and}\:\;(\forall\,x\in\XC)\quad 
f(x)=\sum_{k\in\KK}\mu_k\dictfunc_k(x) }.
\end{equation}
Let us compute the feature map $\fmap \colon \XC \to \LL(\YY^*,
l^{r^*}\!(\KK))$. Let $x\in\XC$, let $y^*\in\YY^*$, and denote
by $\pair{\cdot}{\cdot}_{r,r^*}$ the canonical pairing 
between $l^r(\KK)$ and $l^{r^*}\!(\KK)$. Then, 
for every $u\in l^{r}(\KK)$, 
\begin{equation}
\label{eq:featmap}
\pair{u}{\fmap(x) y^*}_{r,r^*}=\pair{\fmap(x)^* u}{y^*}
=\pair{(A u)(x)}{y^*}=\sum_{k\in\KK}\mu_k
\pair{\dictfunc_k(x)}{y^*},
\end{equation}
which gives  
$\fmap(x)y^*=(\pair{\dictfunc_k(x)}{y^*})_{k\in\KK}$. Since 
$\LL(\YY^*, l^{r^*}\!(\KK))$ and  $l^{r^*}(\KK;\YY)$ are isomorphic
Banach spaces, the feature map can be identified with
\begin{equation}
\label{e:7-1!}
\fmap\colon\XC \to  l^{r^*}(\KK;\YY)\colon x\mapsto 
(\dictfunc_k(x))_{k\in\KK}.
\end{equation}
We remark that $\ran A$ is $p$-universal if, for every
probability measure $\mu$ on $(\XC,\mathfrak{A}_\XC)$, the span of
$(\dictfunc_k)_{k\in\KK}$ is dense in $L^p(\XC,\mu;\YY)$.  Now
suppose that $r>1$.  Since $l^r(\KK)$ is reflexive, strictly
convex, and smooth, Proposition \ref{prop:RKBS} asserts that
$\ran A$ is a reproducing kernel Banach space and that the
underlying kernel $K_r\colon\XC\times\XC\to\mathscr{B}(\YY^*,\YY)$
can be computed explicitly. Indeed, \cite[Proposition~4.9]{Ciora90}
implies that the ${r}$-duality map of $l^r(\KK)$ is
\begin{equation}
\label{eq:dualitymapellr}
J_{{r}}\colon l^r(\KK)\to l^{r^*}(\KK)\colon u=(\mu_k)_{k\in\KK}
\mapsto 
(\abs{\mu_k}^{r-1} \sign(\mu_k))_{k\in\KK} 
\end{equation}
Moreover, $J_{{r}}^{-1}\colon l^{r^*}(\KK) \to l^{r}(\KK)$ is
the ${r^*}$-duality map of $l^{r^*}(\KK)$ (hence it has the
same form as \eqref{eq:dualitymapellr} with $r$ replaced by $r^*$).
Thus, for every $(x,x^\prime)\in\XC \times \XC$ and every 
$y^*\in\YY$
\begin{equation}
K_{{r}}(x,x^\prime)y^*
=\fmap(x^\prime)^* \big(  J_{{r}}^{-1} (\fmap(x) y^*) \big)
=\sum_{k\in\KK}\abs{\pair{\dictfunc_k(x)}{y^*}}^{r^*-1}
\sign(\pair{\dictfunc_k(x)}{y^*})\dictfunc_k(x^\prime).
\end{equation}
In the scalar case $\YY=\RR$, this becomes
\begin{equation}
K_{{r}}(x,x^\prime)
=\pair{J_{{r}}^{-1}(\fmap(x))}{\fmap(x^\prime)}_{r,r^*}
=\sum_{k\in\KK} \abs{\dictfunc_k(x)}^{r^*-1}
\sign(\dictfunc_k(x))\dictfunc_k(x^\prime). 
\end{equation}
\end{example}

\begin{example}[Sobolev spaces]
Let $(d,k,m)\in(\NN\smallsetminus\{0\})^3$ and let
$p\in\left]1,\pinf\right[$.   Let $\XC\subset\RR^d$ be a nonempty
open bounded set with regular boundary and consider the Sobolev
space $W^{m,p}(\XC;\RR^k)$, normed with $\|\cdot\|_{m,p}\colon
f\mapsto\big(\sum_{\alpha\in\NN^d, \abs{\alpha}\leq m}
\norm{D^\alpha f}^p_p\big)^{1/p}$.  Recall that, if $m p>d$, then
$W^{m,p}(\XC;\RR^k)$ is continuously embedded in
$\mathcal{C}(\overline{\XC};\RR^k)$ \cite{Adams03}. Therefore 
\begin{equation}
(\exi\beta\in\RPP)(\forall x\in\XC)(\forall f\in W^{m,p}(\XC;\RR^k))
\quad\abs{f(x)}\leq \norm{f}_\infty\leq \beta \norm{f}_{m,p}.
\end{equation}
Moreover $W^{m,p}(\XC;\RR^k)$  is
isometrically isomorphic to a closed vector subspace of
$[L^p(\XC;\RR^k)]^n$, for a suitable $n\in\NN$, normed with 
$\norm{\cdot}_p\colon(f_1, \ldots, f_n)\mapsto \big( \sum_{i=1}^n
\norm{f_i}_p^p \big)^{1/p}$. 
Therefore, $W^{m,p}(\XC;\RR^k)$ is uniformly convex and smooth
(with the same moduli of convexity and smoothness as $L^p$).  This
shows that $W^{m,p}(\XC;\RR^k)$ is a reproducing kernel Banach
space and also that the associated feature map $\fmap$ is bounded.
Likewise, $W_0^{m,p}(\XC; \RR^k)$ is a reproducing kernel Banach
space endowed with the norm $\norm{\nabla \cdot}_p$, where this
time $\nabla\colon W_0^{m,p}(\XC; \RR^k) \to L^p(\XC;\RR^{k\times
d})$ is an isometry. For simplicity, we address the 
computation of the kernel for the space 
$W^{1,p}_0(\XC;\RR)$. In this case, the $p$-duality map is
\begin{equation}
\frac{1}{p}\partial\norm{\nabla \cdot}_p^p=-\Delta_p\colon
W^{1,p}_0(\XC;\RR)\to\big(W^{1,p}_0(\XC;\RR)\big)^*,
\end{equation}
where $\Delta_p$ is the $p$-Laplacian operator
\cite[Section~6.6]{Atto06}. Therefore, it follows from 
\eqref{eq:scal_kerneldef} that
\begin{equation}\label{eq:Dirichlet}
(\forall\, (x,x^\prime)\in \XC^2)\quad K_{p}(x,x^\prime)=
u(x^\prime),\quad\text{where}\quad u\neq 0\quad\text{and}\quad
-\Delta_p u=\ev_x.
\end{equation}
In the case when $\XC=[0,1]$, the kernel can be computed 
explicitly as follows
\begin{equation}
(\forall\, (x,x^\prime)\in \XC^2)\quad K_p(x,x^\prime) =
\begin{cases}
\dfrac{(1-x) x^\prime}{\big( x^{p-1}+(1-x)^{p-1}
\big)^{1/(p-1)}} &\text{if}\ x^\prime \leq x\\[3ex]
\dfrac{ (1- x^\prime)x}{\big( x^{p-1}+(1-x)^{p-1}
\big)^{1/(p-1)}} &\text{if}\ x^\prime \geq x
\end{cases}
\end{equation}
Finally, using a mollifier argument \cite[Theorem 2.29]{Adams03},
$W_0^{m,p}(\XC;\RR)_+$ is dense in $\mathscr{C}_0(\XC;\RR)_+$. 
Hence, by Theorem~\ref{prop:C-universality},
$W_0^{m,p}(\XC;\RR)$ is universal relative to the cone of
$\RP\!\verb0-0$valued functions.
\end{example}

\begin{remark}
Proposition~\ref{prop:RKBS} and the results pertaining to the
computation of the kernel are of interest in their own right. Note,
however, that they will not be directly exploited subsequently
since in the main results of Section~\ref{sec:setting}  knowledge
of a kernel will turn out not to be indispensable.
\end{remark}

\subsection{Representer and sensitivity theorems in Banach spaces}
\label{subsec:repthm}

In the classical setting, a representer theorem states that a
minimizer of a Tikhonov regularized empirical risk function defined
over a reproducing kernel Hilbert space can be represented as a
finite linear combination of the feature map values on the training
points \cite{Scho01}. The investigation in Banach spaces was 
initiated in \cite{MiccPontil94} and continued in \cite{Zhang2012,Zhang2013}.
In this section representer theorems are established in the general
context of Banach spaces, totally convex regularizers,
vector-valued functions, and approximate minimization. These
contributions capture and extend existing results. Moreover, we
study the sensitivity of such representations with respect to
perturbations of the probability distribution on $\XC\times\YC$. 

\begin{definition}
\label{def:lossandrisk}
Let $\XC$ and $\YC$ be nonempty sets, let $(\XC\times\YC,
\mathfrak{A},\PP)$ be a complete probability space, and let
$\PP_\XC$ be the marginal probability measure of $\PP$ on $\XC$. 
Let $\YY$ be a separable reflexive real Banach
space with norm $\abs{\cdot}$ and Borel $\sigma$-algebra
$\mathfrak{B}_\YY$. $\Loss(\XC\times\YC\times\YY)$ is the set of
functions $\ell\colon\XC\times\YC\times\YY\to\RP$ such that $\ell$
is measurable with respect to the tensor product $\sigma$-algebra
$\mathfrak{A}\otimes\mathfrak{B}_\YY$
and, for every $(x,y)\in\XC\times\YC$, $\ell(x,y,\cdot)\colon\YY
\to\RR$ is continuous and convex. A function in 
$\Loss(\XC\times\YC\times\YY)$ is a
\emph{loss}. The \emph{risk} associated 
with $\ell\in\Loss(\XC\times\YC\times\YY)$ and $\PP$ is
\begin{equation}
\label{eq:riskP}
R\colon\MC(\XC, \YY) \to \RPX\colon
f\mapsto\int_{\XC\times\YC} \ell\big(x,y,f(x)\big)\PP(\ud (x,y)).
\end{equation}
In addition, 
\vspace{-2ex}
\begin{enumerate}
\setlength{\itemsep}{1pt} 
\item\label{def:Gpi}
given $p\in [1,+\infty[$, $\Loss_p(\XC\times\YC\times\YY,\PP)$ is
the set of functions  $\ell\in\Loss(\XC\times\YC\times\YY)$ such
that
\begin{equation}
\label{2cwrey6T24a}
(\exi b\in L^1(\XC\times\YC, \PP;\RR))(\exi c\in\RP)(\forall
(x,y,\ww)\in\XC\times\YC \times \YY)\quad 
\ell(x,y,\ww)\leq b(x,y)+c\abs{\ww}^p;
\end{equation}
\item 
$\Loss_\infty(\XC\times\YC\times\YY,\PP)$ is the set of functions
$\ell\in\Loss(\XC\times\YC\times\YY)$ such that 
\begin{multline}
\label{2cwrey6T24b}
(\forall\rho\in\RPP)(\exi g_\rho\in L^1(\XC\times\YC,\PP;\RR))\\
(\forall\, (x,y)\in\XC\times\YC)(\forall\ww\in B(\rho))
\quad \ell(x,y, \ww)\leq g_\rho(x,y);
\end{multline}
\item\label{def:locLiploss}
$\Loss_{\YY,\text{loc}}(\XC\times\YC\times\YY)$ is the set of
functions $\ell\in\Loss(\XC\times\YC\times\YY)$ such that 
\begin{multline}
\label{e:loclip}
(\forall\rho\in\RPP)(\exi \lip{\ell}{\rho}\in\RPP)
(\forall (x,y)\in\XC\times\YC)(\forall (\ww,\ww^\prime)\in
B(\rho)^2)\\ \abs{\ell(x,y,\ww)-\ell(x,y,\ww^\prime)}\leq
\lip{\ell}{\rho}\abs{\ww-\ww^\prime}.
\end{multline}
\end{enumerate}
\end{definition}

\begin{remark}\
\label{r:contrisk} 
\vspace{-2ex}
\begin{enumerate}
\setlength{\itemsep}{1pt} 
\item
\label{r:contriskii} 
The properties defining the classes of losses 
introduced in Definition~\ref{def:lossandrisk} arise in 
the calculus of variations \cite{FonLeo07}. 
Let $p\in [1,+\infty]$ and suppose that 
$\ell\in\Loss_p(\XC\times\YC\times\YY,\PP)$. 
Then the risk \eqref{eq:riskP} is real-valued on 
$L^p(\XC,\PP_{\XC};\YY)$. Moreover,
since for every $(x,y)\in\XC\times\YC$, $\ell(x,y,\cdot)$ is
convex and continuous, $R\colon L^p(\XC,\PP_{\XC};\YY)\to\RP$ is 
convex and continuous \cite[Corollaries~6.51 and 6.53]{FonLeo07}.
\item
\label{r:contriski} 
If $\ell\in\Loss_p(\XC\times\YC\times\YY,\PP)$ 
then $\ell(x,y,\cdot)$ is bounded on bounded sets. Hence, by
Proposition~\ref{prop:Lip2Subrad}\ref{propii:Bound2Lip}, 
$\ell(x,y,\cdot)$ is Lipschitz continuous relative to bounded sets.
\item
\label{r:contriskiii} 
If $q\in\left[p,\pinf\right]$, then 
$\Loss_p(\XC\times\YC\times\YY,\PP)\subset\Loss_q
(\XC\times\YC\times\YY,\PP)$. 
\item
\label{r:contriskiv} 
Suppose that $\ell\in\Loss_{\YY,\text{loc}}(\XC\times\YC\times\YY)$ 
and that there exists $f\in L^\infty(\XC,\PP_{\XC};\YY)$ such that
$R(f)<+\infty$. 
Then $\ell\in\Loss_\infty(\XC\times\YC\times\YY,\PP)$ and 
\ref{r:contriskii} implies that 
$R\colon L^\infty(\XC,\PP_{\XC};\YY)\to\RP$ is 
convex and continuous.
\item
The following are consequences of
Propositions~\ref{prop:Lip2Subrad}\ref{propii:Bound2Lip} and
\ref{prop:LipGrowthp}\ref{prop:LipGrowthpi}:
\begin{enumerate}
\setlength{\itemsep}{1pt} 
\item
\label{rmk:lossloclip_vs_lpi}
Suppose that $\ell\in\Loss_1(\XC\times\YC\times\YY,\PP)$ and
let $c\in \RP$ be as in Definition~\ref{def:lossandrisk}\ref{def:Gpi}.
Then $\ell\in\Loss_{\YY,\text{loc}}(\XC\times\YC\times\YY)$ and 
$\sup_{\rho\in\RPP}\lip{\ell}{\rho}\leq c$. 
Hence $\ell$ is Lipschitz continuous in the third variable, 
uniformly with respect to the first two. Moreover, in this case, 
the inequality in \eqref{2cwrey6T24a} is true with 
$b=\ell(\cdot,\cdot,0)$.
\item
\label{rmk:lossloclip_vs_lpii}
Let $p\in\left]1,+\infty\right[$, let
$\ell\in\Loss_p(\XC\times\YC\times\YY,\PP)$, and 
suppose that the inequality in \eqref{2cwrey6T24a} 
holds with $b$ bounded and some $c\in\RP$. 
Then $\ell\in\Loss_{\YY,\text{loc}}(\XC\times\YC\times\YY)$ 
and $\ell(\cdot,\cdot,0)$ is bounded. Moreover,
for every $\rho\in\RPP$, $\lip{\ell}{\rho}\leq
(p-1)\norm{b}_\infty+3 c p\max\{1,\rho^{\,p-1}\}$.
\item 
\label{rmk:lossloclip_vs_lpiii}
Let $\ell\in\Loss_\infty(\XC\times\YC\times\YY,\PP)$. Then the
functions $(g_\rho)_{\rho\in\RPP}$ in \eqref{2cwrey6T24b} belong
to $L^\infty(P)$ if and only if
$\ell\in\Loss_{\YY,\text{loc}}(\XC\times\YC\times\YY)$ and
$\ell(\cdot,\cdot,0)$ is bounded. In this case, for every
$\rho\in\RPP$, $\lip{\ell}{\rho}\leq 2\norm{g_{\rho+1}}_{\infty}$.
\end{enumerate}
\end{enumerate}
\end{remark}

\begin{example}[$L^p$-loss]
\label{ex:distbased}
Consider the setting of Definition~\ref{def:lossandrisk} and 
let $p\in\left[1, \pinf\right[$.
Suppose that $\YC\subset\YY$, that 
$\int_{\XC \times \YC} \abs{y}^p \PP(\ud(x,y))<\pinf$,
and that
\begin{equation}
\label{dbloss}
(\forall\,(x,y,\ww)\in\XC\times\YC
\times\YY)\quad\ell(x,y,\ww)=\abs{y-\ww}^p.
\end{equation}
Then $\ell\in\Loss_p(\XC\times\YC\times\YY,\PP)$.
Moreover, suppose that $\YC$ is
bounded and set $\beta=\sup_{y\in\YC}\abs{y}$. Then
$\ell\in\Loss_{\YY,\text{loc}}(\XC\times\YC\times\YY)$ and
$(\forall \rho\in\RPP)$ $\lip{\ell}{\rho}\leq p
(\rho+\beta)^{p-1}$.  Indeed, the case $p=1$ is straightforward. If
$p>1$, it follows from \eqref{eq:Lipforploss} that, for every
$y\in\mathcal{Y}$ and every $(\ww, \ww^\prime)\in\YY^2$,
$\big\lvert\abs{\ww-y}^p-\abs{\ww^\prime-y}^p \big\rvert\leq p\,
\text{\rm max}\{\abs{y-\ww}^{p-1}, \abs{y-\ww^\prime}^{p-1}\} 
\abs{\ww-\ww^\prime}$.
Therefore, for every $(\ww, \ww^\prime)\in B(\rho)^2$ and every
$y\in\mathcal{Y}$, 
$\big\lvert\abs{\ww-y}^p-\abs{\ww^\prime-y}^p \big\rvert\leq
p (\rho+\beta)^{p-1} \abs{\ww-\ww^\prime}$. 
\end{example}

Now we propose a general representer theorem which involves the 
feature map from Definition~\ref{def:fmap}.

\begin{theorem}[Representer]
\label{thm:genrep} 
Let $\XC$ and $\YC$ be nonempty sets, let $(\XC\times\YC,
\mathfrak{A},\PP)$ be a complete probability space, and let
$\PP_\XC$ be the marginal probability measure of $\PP$ on $\XC$. 
Let $\YY$ be a separable reflexive 
real Banach space with norm $\abs{\cdot}$, 
let $\FF$ be a separable reflexive real Banach space, let
$A\colon\FF\to\MC(\XC,\YY)$ be linear and continuous with respect 
to pointwise convergence on $\YY^\XC$, and let $\fmap$ be the
associated feature map. Let $p\in[1,\pinf]$, let 
$\ell\in\Loss_p(\XC\times\YC\times\YY,\PP)$, let $R$ be the
risk associated with $\ell$ and $\PP$, and suppose that 
$\fmap\in L^p[\XC,\PP_\XC;\LL(\YY^*,\FF^*)]$. Set $F=R\circ A$, let 
$G\in\Gamma_0^+(\FF)$, let $\lambda\in\RPP$, 
let $\epsilon\in\RP$, and suppose that 
$u_{\lambda}\in\FF$ satisfies
\begin{equation}
\label{e:defula}
\inf\|\partial (F+\lambda  G )(u_{\lambda})\|
\leq\epsilon.
\end{equation}
Then there exists $h_{\lambda}\in L^{p^*}(\XC\times\YC, \QQ ;
\YY^*)$ such that
\begin{equation}
\label{eq:h}
(\forall\, (x,y)\in\XC\times\YC)\quad
h_{\lambda}(x,y)\in\partial_{\,\YY}\ell\big(x,y,
(Au_{\lambda})(x)\big)
\end{equation}
and
\begin{equation}
\label{eq:genrep}
(\exi e^*\in\FF^*)\quad\|e^*\|\leq\epsilon\quad\text{and}\quad
e^*-\EE_\QQ (\fmap h_{\lambda})\in 
\lambda \partial  G(u_{\lambda}),
\end{equation}
where $\fmap h_{\lambda}\colon \XC\times\YC \to \FF^*\colon (x,y)
\mapsto\fmap(x) h_{\lambda}(x,y)$ and, for every
$(x,y,\ww)\in\XC\times \YC \times \YY$,
$\partial_{\,\YY}\ell(x,y,\ww)=\partial\ell (x,y,\cdot)(\ww)$.
Moreover, the following hold:
\vspace{-2ex}
\begin{enumerate}
\setlength{\itemsep}{1pt} 
\item
\label{thm:genrepii}
Suppose that $p\neq+\infty$.
Let $(b,c)$ be as in Definition~\ref{def:lossandrisk}\ref{def:Gpi}.
If $p=1$, then $\norm{h_{\lambda}}_{\infty}
\leq c$; if $p>1$, then $\norm{h_{\lambda}}_{1}\leq ( p-1)
\norm{b}_1+3 p c (1+\norm{\fmap}_{p}^{p-1}
\norm{u_{\lambda}}^{p-1})$. 
\item
\label{thm:genrepiii}
Suppose that $p=+\infty$, that
$\ell\in\Loss_{\YY,\text{loc}}(\XC\times\YC\times\YY)$
and let $\rho\in\left]\norm{u_\lambda},\pinf\right[$.
Then  $h_{\lambda}\in  L^\infty(\XC\times\YC,\PP;\YY^*)$ and
$\norm{h_{\lambda}}_{\infty}
\leq \lip{\ell}{\rho\norm{\fmap}_\infty}$.
\end{enumerate}
\end{theorem}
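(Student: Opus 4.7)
My plan is to reduce the approximate optimality condition \eqref{e:defula} to two standard rules of convex calculus (the sum rule and a chain rule) and then to extract $h_{\lambda}$ through the integral representation of the subdifferential of a convex integral functional.

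First I would set up the functional framework. By Proposition~\ref{p:2014-09-03}\ref{p:2014-09-03ii}, the hypothesis $\fmap\in L^{p}[\XC,\PP_{\XC};\LL(\YY^{*},\FF^{*})]$ ensures that $A$ factors through $L^{p}(\XC,\PP_{\XC};\YY)$ with $\|Au\|_{p}\leq\|\fmap\|_{p}\|u\|$, while Remark~\ref{r:contrisk}\ref{r:contriskii} (together with~\ref{r:contriskiv} if $p=+\infty$) shows that $R\colon L^{p}(\XC,\PP_{\XC};\YY)\to\RP$ is convex and continuous. Hence $F=R\circ A$ is convex continuous on $\FF$, the Moreau--Rockafellar sum rule applies, and $\partial(F+\lambda G)(u_{\lambda})=\partial F(u_{\lambda})+\lambda\partial G(u_{\lambda})$. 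This set is convex and norm-closed in the reflexive dual $\FF^{*}$, so the infimum in~\eqref{e:defula} is attained: there exist $e^{*}\in\FF^{*}$ with $\|e^{*}\|\leq\epsilon$, $v^{*}\in\partial F(u_{\lambda})$, and $g^{*}\in\partial G(u_{\lambda})$ with $e^{*}=v^{*}+\lambda g^{*}$.

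Next I would apply the chain rule. Writing $A=E\circ A_{p}$, where $A_{p}\colon\FF\to L^{p}(\XC,\PP_{\XC};\YY)$ is the continuous linear operator induced by $A$ and $E\colon L^{p}(\XC,\PP_{\XC};\YY)\to L^{p}(\XC\times\YC,\PP;\YY)\colon f\mapsto((x,y)\mapsto f(x))$ is the isometric pullback (available because $\PP_{\XC}$ is the marginal of $\PP$), I set $\widetilde R(g)=\int\ell(x,y,g(x,y))\,\PP(\ud(x,y))$, so that $F=\widetilde R\circ E\circ A_{p}$. The chain rule for continuous convex functions yields $h_{\lambda}\in\partial\widetilde R((E\circ A_{p})u_{\lambda})$ with $v^{*}=A_{p}^{*}E^{*}h_{\lambda}$. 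The crux is then the Ioffe--Levin--Rockafellar integral representation of $\partial\widetilde R$: under the $\Loss_{p}$ growth condition on $\ell$,
$$\partial\widetilde R(g)=\Menge{h\in L^{p^{*}}(\XC\times\YC,\PP;\YY^{*})}{h(x,y)\in\partial_{\,\YY}\ell(x,y,g(x,y))\;\;\PP\text{-a.s.}},$$
applied to $g=(E\circ A_{p})u_{\lambda}$, whose value at $(x,y)$ is $(Au_{\lambda})(x)$. This yields~\eqref{eq:h}. The adjoint identification is then a direct Fubini computation based on the feature-map identity~\eqref{eq:fmap2}: for every $u\in\FF$,
$$\pair{u}{A_{p}^{*}E^{*}h_{\lambda}}=\int_{\XC\times\YC}\pair{(Au)(x)}{h_{\lambda}(x,y)}\PP(\ud(x,y))=\pair{u}{\EE_{\PP}(\fmap h_{\lambda})}.$$
Thus $v^{*}=\EE_{\PP}(\fmap h_{\lambda})$ and $e^{*}-\EE_{\PP}(\fmap h_{\lambda})=\lambda g^{*}\in\lambda\partial G(u_{\lambda})$, giving~\eqref{eq:genrep}.

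The norm estimates in \ref{thm:genrepii}--\ref{thm:genrepiii} would then follow from the pointwise inclusion $h_{\lambda}(x,y)\in\partial_{\,\YY}\ell(x,y,(Au_{\lambda})(x))$, by bounding the subgradients of the convex functions $\ell(x,y,\cdot)$ through the Lipschitz-type growth estimates recorded in Remark~\ref{r:contrisk} (Propositions~\ref{prop:Lip2Subrad} and~\ref{prop:LipGrowthp} in the appendix), combined with the control $\|Au_{\lambda}\|_{p}\leq\|\fmap\|_{p}\|u_{\lambda}\|$. The main delicate point, I expect, is the $p=+\infty$ instance of the integral representation: since $(L^{\infty})^{*}\neq L^{1}$, a subgradient in $(L^{\infty})^{*}$ is a priori only a finitely additive measure. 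The additional hypothesis $\ell\in\Loss_{\YY,\mathrm{loc}}(\XC\times\YC\times\YY)$, however, bounds $\partial_{\,\YY}\ell(x,y,\cdot)$ uniformly on balls, so a measurable selection theorem directly produces an $L^{\infty}$-bounded $h_{\lambda}$, and the pointwise convex subgradient inequality propagates by integration against any $L^{\infty}$ test function to a genuine $L^{\infty}$-subgradient inequality, sidestepping the duality obstruction.
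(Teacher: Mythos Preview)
Your overall architecture—sum rule, chain rule through a bounded linear operator into $L^{p}$, integral representation of $\partial\widetilde R$, and identification of the adjoint as $h\mapsto\EE_{\PP}(\fmap h)$—is exactly the paper's. For $p<\infty$ your sketch is fine and matches the paper's proof essentially line for line.

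The $p=\infty$ case, however, is where your proposal diverges and develops a gap. The paper does \emph{not} work in the norm dual $(L^{\infty})^{*}$: it explicitly places $L^{\infty}$ and $L^{1}$ in duality via the natural pairing and equips $L^{\infty}$ with the weak topology $\sigma(L^{\infty},L^{1})$. Rockafellar's theorem on integral functionals (Theorem~21 in \cite{Rock74}) is stated precisely for such decomposable pairings, and yields $\partial\Psi\subset L^{1}$ directly; the chain rule from \cite[Theorem~2.8.3(vi)]{Zali02} then applies in this locally convex setting. No finitely additive measures ever appear, and no extra hypothesis is needed for the main claim. Your proposed workaround, by contrast, invokes the local Lipschitz condition $\ell\in\Loss_{\YY,\mathrm{loc}}(\XC\times\YC\times\YY)$, which is assumed only in item~\ref{thm:genrepiii}, not in the main statement. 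Even granting that hypothesis, a measurable selection of $(x,y)\mapsto\partial_{\,\YY}\ell(x,y,(Au_{\lambda})(x))$ only produces \emph{some} $h_{\lambda}$ with $\EE_{\PP}(\fmap h_{\lambda})\in\partial F(u_{\lambda})$; it does not show that the \emph{particular} $v^{*}\in\partial F(u_{\lambda})$ coming from your sum-rule decomposition $e^{*}=v^{*}+\lambda g^{*}$ has this form, which is what \eqref{eq:genrep} requires. You need the full chain-rule equality $\partial F(u_{\lambda})=\widehat{A}^{*}\partial\Psi(\widehat{A}u_{\lambda})$, and for that the locally convex route is the clean one.

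Two smaller points. First, the integral representation delivers the pointwise inclusion $h_{\lambda}(x,y)\in\partial_{\,\YY}\ell(x,y,(Au_{\lambda})(x))$ only $\PP$-a.e.; the paper upgrades this to ``for every $(x,y)$'' as stated in \eqref{eq:h} by modifying $h_{\lambda}$ on a null set, using completeness of $\PP$ and the fact that each $\ell(x,y,\cdot)$ is real-valued convex, hence everywhere subdifferentiable. Second, for the bounds in \ref{thm:genrepii} the paper uses the \emph{pointwise} control $\lvert(Au_{\lambda})(x)\rvert\leq\|\fmap(x)\|\,\|u_{\lambda}\|$ (from \eqref{eq:fmap}) rather than the global $\|Au_{\lambda}\|_{p}\leq\|\fmap\|_{p}\|u_{\lambda}\|$; this is what feeds into Proposition~\ref{prop:LipGrowthp} and, after integrating and applying Jensen's inequality, yields the stated constants.
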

\begin{proof}
Set
\begin{equation}\label{eq:R}
\Psi\colon L^p(\XC\times\YC,\PP;\YY) \to \RPX\colon g\mapsto
\int_{\XC\times\YC} \ell(z, g(z))  \QQ (\ud z).
\end{equation}
Since $\ell\in \Loss_p(\XC\times\YC\times\YY,\PP)$,  $\Psi$ is
real-valued and convex. Place
$L^{p}(\XC\times\YC, \QQ; \YY)$ and
$L^{p^*}(\XC\times\YC, \QQ; \YY^*)$ in duality by means of
the pairing 
\begin{equation}
\label{eq:lebduality}
\pair{\cdot}{\cdot}_{p, p^*}\colon(g,h)\mapsto\int_{\XC\times\YC}
\pair{g(z)}{h(z)}\,\QQ (\ud z).
\end{equation}
From now on, we denote by $L^p$ and $L^{p^*}$  the above cited
Lebesgue spaces, endowed with the weak topologies 
$\sigma(L^p,L^{p^*})$ and $\sigma(L^{p^*}, L^p)$, derived from the
duality \eqref{eq:lebduality}. Moreover, since $\ell\geq 0$, 
it follows from \cite[Theorem 21(c)-(d)]{Rock74} that
$\Psi\colon L^p\to\RR$ is lower
semicontinuous and 
\begin{equation}
\label{eq:sub}
(\forall g\in L^p)\quad
\partial \Psi(g)=\menge{h\in L^{p^*}}{h(z)\in\partial_{\,\YY}
\ell(z,g(z))\; \text{for}\ \QQ\text{-a.a.}~z\in\XC\times\YC}.
\end{equation}
Next, since $\fmap\in L^p[\XC,\PP_\XC;\LL(\YY^*,\FF^*)]$, it 
follows from Proposition~\ref{pZhd7rT9-03}\ref{pZhd7rT9-03ii}, 
that $A\colon\FF\to L^p(\XC,\PP_\XC;\YY)$ is continuous.
Therefore the map
$\widehat{A}\colon\FF \to L^p$ defined by 
\begin{equation}
(\forall\,u \in\FF)\quad\widehat{A}
u\colon \XC\times\YC\to\YY\colon (x,y)\mapsto (Au)(x)
\end{equation}
is linear and continuous.
Moreover, 
\begin{align}\label{eq:adjointiota2}
(\forall\,u\in\FF)(\forall\,h\in
L^{p^*})\quad \langle\widehat{A} u, h\rangle_{p, p^*} 
=\int_{\XC\times\YC} \pair{ u}{\fmap(x)
h(x,y)}\, \QQ(\ud(x,y))
=\pair{u}{\EE_\QQ (\fmap h)}.
\end{align}
Note that, in \eqref{eq:adjointiota2}, $\EE_\QQ (\fmap h)$ is
well defined, since $\fmap h$ is measurable \cite[Proposition
1.7]{Dincu2000}, and, for every $(x,y)\in\XC\times\YC$, 
$\norm{\fmap(x) h(x,y)}\leq \norm{\fmap(x)}
\abs{h(x,y)}$. Hence, by H\"older's inequality
$\int_{\XC\times\YC} \norm{\fmap(x) h(x,y)}
\QQ(\ud(x,y))<+\infty$,
and \eqref{eq:adjointiota2} implies that
$\widehat{A}^* : L^{p^*} \to \FF^*\colon h \mapsto\EE_\QQ (\fmap h)$.
Now, since $F=\Psi \circ \widehat{A}$, applying
\cite[Theorem~2.8.3(vi)]{Zali02} to
$\Psi\colon L^p\to\RR$ 
and  $\widehat{A}\colon \FF \to  L^p$ 
and, taking into account \eqref{eq:sub}, we get
\begin{multline}
\label{eq:subdiffPsiiota}
\partial F (u_{\lambda}) 
=\widehat{A}^* (\partial
\Psi(\widehat{A}
u_{\lambda})) \\
=\menge{\EE_\QQ (\fmap h)}{h\in
L^{p^*}\!, h(x,y)\in\partial_{\,\YY}
\ell(x,y, (Au_{\lambda})(x))
\text{ for } \QQ\text{-a.a.}~(x,y)\in\XC\times\YC}.
\end{multline}
Using \eqref{e:defula} and \cite[Theorem~2.8.3(vii)]{Zali02}, 
there exists $e^*\in B(\varepsilon)$
such that $e^*\in\partial(F+\lambda G)(u_\lambda)
=\partial F(u_{\lambda})+\lambda \partial
G(u_{\lambda})$. Hence, in view of
\eqref{eq:subdiffPsiiota}, there exists $h_{\lambda}\in
L^{p^*}$ satisfying
$h_{\lambda}(x,y)\in\partial_{\,\YY}\ell(x,y, (Au_{\lambda})(x))$
for $\QQ$-a.a.\ $(x,y)\in\XC\times\YC$ and $e^*-\EE_\QQ [\fmap
h_{\lambda}]\in\lambda\partial  G(u_{\lambda})$.
Since $\PP$ is complete, and  for every $(x,y)\in\XC\times\YC$,
$\dom\partial_{\,\YY}\ell(x,y,\cdot) \neq \emp$, we can modify 
$h_\lambda$ so that
$h_{\lambda}(x,y)\in\partial_{\,\YY}\ell(x,y,(Au_{\lambda})(x))$
holds for every $(x,y)\in\XC\times\YC$.
 
\ref{thm:genrepii}:
Let $(x,y)\in\XC\times\YC$. Since
$h_{\lambda}(x,y)\in\partial_{\,\YY} \ell(x,y, (Au_{\lambda})(x))$, 
\begin{equation}\label{eq:ineq_embedding}
\abs{(Au_{\lambda})(x)}=\abs{
\fmap(x)^*u_{\lambda}}\leq  \norm{\fmap(x)}
\norm{u_{\lambda}}.
\end{equation}
By Definition~\ref{def:lossandrisk}\ref{def:Gpi}, there exists
$b\in L^1(\XC\times\YC, \QQ;\RR)_+$
and $c\in\RPP$ such that, for every $\ww\in\YY$, 
$\ell(x,y,\ww)\leq b(x,y)+c \abs{\ww}^p$.
Therefore, it follows from Proposition~\ref{prop:LipGrowthp} and
\eqref{eq:ineq_embedding} that, if $p=1$, we have
$\abs{h_{\lambda}(x,y)}\leq c$ and, if $p>1$, we have
$
\abs{h_{\lambda}(x,y)}\leq  (p-1) b(x,y)+
3 p c (\norm{\fmap(x)}^{p-1} \norm{u_{\lambda}}^{p-1}+1)
$.
Hence, using Jensen's inequality, $\norm{h_{\lambda}}_{1}
\leq (p-1) \norm{b}_{1}+3 c p (1+\norm{\fmap}_p^{p-1}
\norm{u_{\lambda}}^{p-1})$. 

\ref{thm:genrepiii}:
Let $(x,y)\in\XC \times \YC$ be such that 
$\norm{\fmap(x)}\leq \norm{\fmap}_{\infty}$,
and set $\tau=\rho\norm{\fmap}_{\infty}$. We assume $\tau>0$.
Then \eqref{eq:ineq_embedding} yields
$\abs{(Au_{\lambda})(x)}<\tau$. Thus, since $B(\tau)$ is a
neighborhood of $(Au_{\lambda})(x)$ in $\YY$,
$\ell(x,y,\cdot)$  is Lipschitz continuous relative to
$B(\tau)$, with Lipschitz constant $\lip{\ell}{\tau}$ and
$h_{\lambda}(x,y)\in\partial_{\,\YY} \ell(x,y, (Au_{\lambda})(x))$,
Proposition~\ref{prop:Lip2Subrad}\ref{propi:Lip2Subrad} gives
$\abs{h_{\lambda}(x,y)}\leq\lip{\ell}{\tau}$.
\end{proof}

\begin{remark}\label{rmkdfhjhl3d05a}\  
\vspace{-2ex}
\begin{enumerate}
\setlength{\itemsep}{1pt} 
\item 
Condition~\eqref{e:defula} is a relaxation of the characterization
of $u_\lambda$ as an exact minimizer of $F+\lambda  G$ via Fermat's
rule, namely  $0\in\partial(F+\lambda  G)(u_\lambda)$.  
\item 
Using different methods, \cite[Theorem~5.7]{Zhang2013} gives a
representer theorem which 
holds only for reproducing kernel 
Banach spaces of vector-valued functions, discrete probabilities,
and $\epsilon=0$ (see the following Remark~\ref{rem:repthm1}). 
By contrast, Theorem~\ref{thm:genrep} is formulated
for general probability measures and in terms of 
the feature map. This underlines the fact that 
the kernel plays no role in the representation and does not even
need to exist.
\item
Theorem~\ref{thm:genrep} is sufficiently general to deal with an
offset space \cite{DeVito04}. To see this, 
let $\FF_1$ and $\FF_2$ 
be separable reflexive real Banach spaces, let 
$A_1\colon\FF_1\to\MC(\XC,\YY)$ and
$A_2\colon\FF_2\to\MC(\XC,\YY)$ be linear operators which are 
continuous with respect to pointwise convergence on $\YY^\XC$,  
let $\fmap_1\colon\XC \to \LL(\YY^*,\FF_1^*)$ 
and $\fmap_2\colon\XC \to \LL(\YY^*,\FF_2^*)$  be 
the feature maps associated with $A_1$ and $A_2$ respectively, and
let $G_1 \in \Gamma_0^+(\FF_1)$.
Suppose that, in Theorem~\ref{thm:genrep}, $\FF=\FF_1\times\FF_2$, 
$\epsilon=0$, and 
\begin{equation}
(\forall u=(u_1, u_2) \in \FF_1 \times \FF_2)\quad A u=A_1 u_1 +
A_2 u_2\quad\text{and}\quad G(u)=G_1(u_1).
\end{equation}
Then, setting $u_\lambda=(u_{1,\lambda},u_{2,\lambda})$,
\eqref{eq:h} and \eqref{eq:genrep} yield
\begin{equation}
\label{e:ar1}
(\forall\, (x,y)\in\XC\times\YC)\quad
h_{\lambda}(x,y)\in\partial_{\,\YY}\ell\big(x,y, (A_1
u_{1,\lambda})(x)+(A_2 u_{2,\lambda})(x)\big)
\end{equation}
and
\begin{equation}
\label{e:ar2}
-\EE_\QQ (\fmap_1 h_{\lambda})\in 
\lambda \partial  G_1(u_{1,\lambda})\quad\text{and}\quad \EE_\QQ
(\fmap_2 h_{\lambda})=0.
\end{equation}
This gives a representer theorem with offset space $\FF_2$. 
If we assume further that $\FF_1$ and $\FF_2$ are reproducing 
kernel Hilbert spaces of scalar functions, that $G_1=\|\cdot\|^2$, 
and that $p<+\infty$, the resulting special case of 
\eqref{e:ar1} and \eqref{e:ar2} appears in 
\cite[Theorem~2]{DeVito04}.
\end{enumerate} 
\end{remark}

\begin{corollary} 
\label{cor:repthm}
In Theorem~\ref{thm:genrep}, make the additional assumption that
$\FF$ is strictly convex and smooth, that there exists a convex even
function $\varphi\colon\RR\to\RP$ vanishing only at $0$ such that
\begin{equation}
\label{eq:regPhinorm}
G=\varphi\circ \norm{\cdot},
\end{equation}
and that $u_{\lambda}\neq 0$. Let $r \in \left]1, +\infty\right[$. Then there exist $e^*\in\FF^*$,
$h_{\lambda}\in L^{p^*}(\XC\times\YC,\QQ;\YY^*)$,
and $\xi({u_\lambda})\in\partial\varphi(\norm{u_\lambda})$ 
such that $\norm{e^*}\leq \epsilon$, \eqref{eq:h} holds, and
\begin{equation}
\label{eq:genrep0Bq}
J_{\FF.r}(u_\lambda)=
\frac{\norm{u_\lambda}^{r-1}}{\lambda\xi({u_\lambda})}
(e^*-\EE_\QQ [\fmap h_{\lambda}]).
\end{equation}
\end{corollary}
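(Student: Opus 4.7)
The plan is to specialize the conclusion of Theorem~\ref{thm:genrep} to the structured regularizer $G=\varphi\circ\|\cdot\|$ by applying a subdifferential chain rule and then rewriting the resulting formula in terms of the $r$-duality map.

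First I would invoke Theorem~\ref{thm:genrep} directly to obtain $h_\lambda\in L^{p^*}(\XC\times\YC,\QQ;\YY^*)$ satisfying \eqref{eq:h} together with $e^*\in\FF^*$ with $\|e^*\|\le\epsilon$ such that $e^*-\EE_\QQ(\fmap h_\lambda)\in\lambda\partial G(u_\lambda)$. The task then reduces to computing $\partial G(u_\lambda)$ under the hypothesis $u_\lambda\neq 0$.

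Next I would analyze $\varphi$. Because $\varphi$ is convex, even, and vanishes only at $0$, a one-line convexity argument (write $a=(a/b)b+(1-a/b)\cdot 0$ for $0<a<b$) shows $\varphi$ is strictly increasing on $\RP$. In particular, it is nondecreasing on the range of $\|\cdot\|$, so the standard chain rule for composition of a nondecreasing convex function with a continuous convex function yields
\begin{equation*}
\partial G(u_\lambda)=\partial\varphi(\|u_\lambda\|)\cdot\partial\|\cdot\|(u_\lambda).
\end{equation*}
Since $\FF$ is strictly convex and smooth and $u_\lambda\neq 0$, $\|\cdot\|$ is Gâteaux differentiable at $u_\lambda$, so $\partial\|\cdot\|(u_\lambda)$ is a singleton equal to $J_{\FF,1}(u_\lambda)$. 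From the defining identities in \eqref{e:JJJ}, multiplying a representative of $J_{\FF,1}(u_\lambda)$ by $\|u_\lambda\|^{r-1}$ produces an element of $J_{\FF,r}(u_\lambda)$; by smoothness both sets are singletons, hence $J_{\FF,r}(u_\lambda)=\|u_\lambda\|^{r-1}J_{\FF,1}(u_\lambda)$. Thus
\begin{equation*}
\partial G(u_\lambda)=\frac{\partial\varphi(\|u_\lambda\|)}{\|u_\lambda\|^{r-1}}\,J_{\FF,r}(u_\lambda).
\end{equation*}

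I would then extract a scalar $\xi(u_\lambda)\in\partial\varphi(\|u_\lambda\|)$ such that $e^*-\EE_\QQ(\fmap h_\lambda)=\lambda\xi(u_\lambda)\|u_\lambda\|^{1-r}J_{\FF,r}(u_\lambda)$. To legitimately divide by $\lambda\xi(u_\lambda)$ I would verify $\xi(u_\lambda)>0$: the subgradient inequality at $t=0$ gives $0=\varphi(0)\ge\varphi(\|u_\lambda\|)-\xi(u_\lambda)\|u_\lambda\|$, so $\xi(u_\lambda)\ge\varphi(\|u_\lambda\|)/\|u_\lambda\|>0$ since $\varphi(\|u_\lambda\|)>0$. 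Rearranging then produces \eqref{eq:genrep0Bq}.

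The only real subtlety is justifying the chain rule and the relation $J_{\FF,r}=\|\cdot\|^{r-1}J_{\FF,1}$ in the reflexive, strictly convex, smooth setting; everything else is a mechanical rewriting of the inclusion already furnished by Theorem~\ref{thm:genrep}.
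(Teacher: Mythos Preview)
Your proof is correct and follows the same overall architecture as the paper's: invoke Theorem~\ref{thm:genrep} to obtain $e^*-\EE_\QQ[\fmap h_\lambda]\in\lambda\,\partial G(u_\lambda)$, identify $\partial G(u_\lambda)$ explicitly, and rescale to exhibit $J_{\FF,r}(u_\lambda)$. The one substantive difference is the tool used to compute $\partial G$. You apply the chain rule $\partial(\varphi\circ\norm{\cdot})(u_\lambda)=\partial\varphi(\norm{u_\lambda})\,\partial\norm{\cdot}(u_\lambda)$ and then the identity $J_{\FF,r}(u_\lambda)=\norm{u_\lambda}^{r-1}\partial\norm{\cdot}(u_\lambda)$; both need a short justification (the chain rule requires replacing $\varphi$ by a nondecreasing version on $\RP$, and the paper only defines $J_{\FF,p}$ for $p>1$, so your ``$J_{\FF,1}$'' is really $\partial\norm{\cdot}$). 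The paper instead computes $G^*(u^*)=\varphi^*(\norm{u^*})$ and reads off from the Fenchel--Young equality that $\partial G(u)=\menge{u^*}{\pair{u}{u^*}=\norm{u}\norm{u^*},\ \norm{u^*}\in\partial\varphi(\norm{u})}$; setting $u^*_\lambda=(e^*-\EE_\QQ[\fmap h_\lambda])/\lambda$ and $\xi(u_\lambda)=\norm{u^*_\lambda}$, it then checks directly from \eqref{e:JJJ} that $\norm{u_\lambda}^{r-1}u^*_\lambda/\norm{u^*_\lambda}=J_{\FF,r}(u_\lambda)$. Your route is slightly more direct; the paper's avoids the chain-rule hypothesis and never needs to name $\partial\norm{\cdot}$ separately.
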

\begin{proof} 
Note $\partial\varphi(\RPP)\subset\RPP$ since $\varphi$ is strictly 
increasing on $\RPP$. It follows from Theorem~\ref{thm:genrep}
that there exist $h_{\lambda}\in L^{p^*}(\XC\times\YC,\QQ;\YY^*)$ 
and $e^*\in\FF^*$ such that \eqref{eq:h} and \eqref{eq:genrep} 
hold. Next, we prove that 
\begin{equation}
\label{eq:subdiffphinorm}
(\forall\, u\in\FF)\quad \partial G(u)=
\menge{u^*\in\FF^*}{\pair{u}{u^*}=\norm{u}\,
\norm{u^*}\ \text{and}\ \norm{u^*}\in\partial\varphi(\norm{u})}.
\end{equation}
It follows from \cite[Example~13.7]{Livre1} that, 
for every $u^*\in\FF^*$, $G^*(u^*)=\varphi^*(\norm{u^*})$.
Moreover, the Fenchel-Young identity entails that,
for every $(u,u^*)\in\FF\times \FF^*$, we have
\begin{align}
u^*\in\partial G(u) 
&\Leftrightarrow \varphi(\norm{u})+\varphi^*(\norm{u^*}) 
=\pair{u}{u^*}\nonumber\\
&\Leftrightarrow \pair{u}{u^*}=\norm{u} \norm{u^*}
\ \text{and}\ \norm{u^*}\in\partial\varphi(\norm{u})\,.
\end{align}
Set $u^*_\lambda=\big(e^*-\EE_\QQ (\fmap h_{\lambda})\big)/\lambda$.
Since $u_\lambda\not\in\{0\}=\Argmin_\FF
G=\menge{u\in\FF}{0\in\partial G(u)}$ and
$u^*_\lambda\in\partial G(u_\lambda)$, then $u_\lambda^*\neq 0$.
Now put
$v^*_\lambda=\norm{u_\lambda}^{r-1} u^*_\lambda/\norm{u_\lambda^*}$, then 
\eqref{eq:subdiffphinorm} yields
$\pair{u_\lambda}{v^*_\lambda}=
\norm{u_\lambda}^r$ and $\norm{u^*_\lambda}\in\partial
\varphi(\norm{u_\lambda})$. Moreover, $\norm{v^*_\lambda}=
\norm{u_\lambda}^{r-1}$. Hence, \eqref{e:JJJ} yields
$v^*_\lambda=J_{\FF,r}(u_\lambda)$ and \eqref{eq:genrep0Bq} follows.
\end{proof}

\begin{remark}
\label{rem:repthm1-}
In Corollary~\ref{cor:repthm}  let
$\varphi=\abs{\cdot}^r$.
Then \eqref{eq:genrep0Bq} specializes to 
\begin{equation}
\label{eq:genrep0Bq1}
J_{\FF,r}(u_{\lambda})=\frac{1}{r \lambda}\big( e^*-\EE_\QQ (\fmap h_{\lambda}) \big).
\end{equation}
If $\FF$ is a Hilbert space, $r=2$, and $\epsilon=0$, we obtain the
representation $u_{\lambda}= - (2 \lambda)^{-1}
\EE_\QQ (\fmap h_{\lambda})$, 
which was first obtained in \cite[Corollary~3]{DeVito04}. 
\end{remark}

\begin{remark}
\label{rem:repthm1}
Let $\epsilon=0$ and let 
$\QQ=n^{-1}\sum_{i=1}^n\delta_{(x_i,y_i)}$ be the empirical 
probability measure associated with the sample 
$(x_i,y_i)_{1\leq i\leq n}\in(\XC\times\YC)^n$. In this context,
we obtain a representation for the solution $u_\lambda$ to 
the regularized empirical risk minimization problem
\begin{equation}\label{eqdfhjhl3d05b}
\minimize{u\in\FF}{\frac 1 n \sum_{i=1}^n \ell (x_i, y_i, A u
(x_i))+\lambda G(u)}.
\end{equation}
Indeed \eqref{eq:genrep} implies that 
 there exists $(\ww^*_i)_{1\leq i\leq n}\in (\YY^*)^n$ such that 
\begin{equation}\label{eqdfhjhl3d05c}
\frac 1 \lambda \sum_{i=1}^n \fmap(x_i)\ww^*_i \in 
\partial G(u_\lambda)
\end{equation}
We observe that the coefficients $(\ww^*_i)_{1 \leq i \leq n}$, solve the dual 
problem
\begin{equation}\label{eqiuYh73n26a}
\min_{(\ww^*_i)_{1\leq i\leq n}\in (\YY^*)^n} 
\lambda G^*\Big(\frac 1 \lambda \sum_{i=1}^n \fmap(x_i)\ww^*_i \Big) 
+ \frac 1 n \sum_{i=1}^n \ell^*(x_i,y_i, - n \ww^*_i),
\end{equation}
of \eqref{eqdfhjhl3d05b}, where $\ell^*(x_i,y_i, \cdot)$ is the
conjugate of $\ell(x_i,y_i, \cdot)$.  Thus, if $G^*$ is
differentiable and $\YY$ is finite dimensional,
\eqref{eqdfhjhl3d05b} can be solved via the finite dimensional
convex problem \eqref{eqiuYh73n26a}, by inverting
\eqref{eqdfhjhl3d05c}, which yields
\begin{equation}
\label{eqdfhjhl3d05d}
u_\lambda = \nabla G^*\bigg(\frac 1 \lambda \sum_{i=1}^n 
\fmap(x_i)\ww^*_i \bigg).
\end{equation}
If $G$ is as in Corollary~\ref{cor:repthm}, then 
\eqref{eqdfhjhl3d05d} gives
$u_{\lambda}=J_{\FF,r}^{-1}\big(\sum_{i=1}^n\fmap(x_i)\ww^*_i\big)$.
Thus, $u_\lambda$, can be expressed in terms of the
feature vectors $(\fmap(x_i))_{1\leq i\leq n}$, for some vector
coefficients $(\ww^*_i)_{1\leq i\leq n}\in (\YY^*)^n$. This covers
the classical setting of representer theorems in scalar-valued
Banach spaces of functions \cite[Theorem~3]{Zhang2012} and improves
the vector-valued case of \cite[Theorem 5.7]{Zhang2013}. 
The dual variational framework \eqref{eqiuYh73n26a}
requires less restrictions and offers more flexibility in terms of 
solution methods than the fixed point approach proposed
in \cite{Fass15}, \cite[Theorem 23]{Zhang2009}, and 
\cite[Section~5.3]{Zhang2013}.
\end{remark}

\begin{example}
We recover a case-study of \cite{MiccPontil94}.
Let $\phi\colon\RP\to\RP$ be strictly increasing, continuous, and 
such that $\phi(0)=0$ and $\lim_{t\to+\infty}\phi(t)=+\infty$.
Define $\varphi\colon\RR\to\RP\colon t\mapsto\int_0^\abs{t}
\phi(s)\ud s$, which is strictly convex, even, and 
vanishes only at $0$.
Assume that $\varlimsup_{t\to 0}\varphi(2 t)/\varphi(t)<+\infty$, 
let $(\Omega,\mathfrak{S},\mu)$ be a measure
space, and let
$\FF=L_{\varphi}(\Omega,\mu;\RR)$ be the associated Orlicz 
space endowed with the Luxemburg norm induced by $\varphi$.
We recall that $\FF^*=L_{\varphi^*}(\Omega,\mu;\RR)$,
the Orlicz space endowed with
the Orlicz norm associated to $\varphi^*$ \cite{RaoRen91}.
Moreover, in this case the normalized duality map $J_{\FF^*}=
J_\FF^{-1}\colon \FF^*\to\FF$
can be computed. Indeed, by
\cite[Theorem~7.2.5]{RaoRen91}, 
we obtain that, for every $g\in\FF^*$, there exists 
$\kappa_g\in\RPP$ such that
$J_{\FF^*}(g)=\norm{g}\phi^{-1}(\kappa_g\abs{g})\sign(g)$. 
Given $(g_i)_{1\leq i\leq n}\in(\FF^*)^n$, 
$(y_i)_{1\leq i\leq n}\in\RR^n$, and
$\lambda\in\RPP$, the problem considered in 
\cite{MiccPontil94} is to solve
\begin{equation}
\minimize{u\in\FF}{ \frac 1 n \sum_{i=1}^n \ell (y_i,
\pair{u}{g_i})+\lambda \varphi(\norm{u})}.
\end{equation}
This corresponds to the framework considered in 
Corollary~\ref{cor:repthm} and Remark~\ref{rem:repthm1},
with $\XC=\FF^*$, $\YC=\YY=\RR$, 
$\QQ=n^{-1}\sum_{i=1}^n\delta_{(g_i,y_i)}$, and
$(\forall g\in\XC)(\forall u\in\FF)$
$(Au)(g)=\pair{u}{g}$.
Since, in this case, for every $g\in \XC$, $\fmap(g)=g$, we
derive from \eqref{eqdfhjhl3d05d} that there exist 
$\kappa\in\RPP$ and $(\alpha_i)_{1 \leq i \leq n} \in \RR^n$ 
such that
\begin{equation}
u_{\lambda}=\|u_{\lambda}\| \phi^{-1} 
\bigg(\kappa\Big\lvert\sum_{i=1}^n\alpha_i g_i
\Big\rvert\bigg)\sign\Big(\sum_{i=1}^n\alpha_i g_i\Big)
\;\text{{and}}\;
-{n\lambda\phi(\norm{u_\lambda})}\alpha_i\in
{\norm{u_\lambda}}\partial\ell(y_i,\cdot)(\pair{u_{\lambda}}{g_i}).
\end{equation}
\end{example}

We conclude this section with a sensitivity result in terms of a
perturbation on the underlying probability measure. 

\begin{theorem}[Sensitivity]
\label{thm:sensitivity2}
In Theorem~\ref{thm:genrep}, make the additional assumption that
$G$ is totally convex at every point of $\dom G$ and let $\psi$ 
be its modulus of total convexity.
Take $h_{\lambda}\in L^{p^*}(\XC\times\YC,\QQ;\YY^*)$ such that
conditions \eqref{eq:h}-\eqref{eq:genrep} hold. Let $\QQI$ be a 
probability measure on $(\XC\times\YC, \mathfrak{A})$ such that
$\ell\in\Loss_\infty(\XC\times\YC\times\YY,\QQI)$ and $\fmap$ is
$\QQI_\XC$-essentially bounded. Define 
\begin{equation}
\label{perturbrisk}
\widetilde{R}\colon\MC(\XC,\YC)\to[0,+\infty]: f\mapsto\int_{\XC
\times \YC} \ell(x,y, f(x)) \widetilde{P}(\ud (x,y)) 
\quad\text{and}\quad \widetilde{F}=\widetilde{R}\circ A. 
\end{equation}
Let $\tilde{\epsilon}\in\RPP$ and let $\tilde{u}_{\lambda}\in\FF$ be
such that $\inf\|\partial(\widetilde{F}+\lambda G)
(\tilde{u}_{\lambda}))\|\leq \tilde{\epsilon}$. Then the following
hold:
\vspace{-2ex}
\begin{enumerate}\setlength{\itemsep}{1pt} 
\item
\label{e:sensitivity0}
$h_{\lambda}\in L^1(\XC\times\YC, \QQI; \YY^*)$.
\item
\label{thm:sensitivity2ii}
$\psi(u_{\lambda},\cdot)^{\!\widehat{\phantom{a}}}
(\norm{\tilde{u}_{\lambda}-u_{\lambda}})\leq 
\big(\norm{\EE_{\QQI} (\fmap h_{\lambda})-\EE_{\QQ} (\fmap h_{\lambda})
}+ \epsilon+\tilde{\epsilon}\big)/\lambda.
$
\end{enumerate}
\end{theorem}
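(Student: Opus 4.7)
The plan for part~\ref{e:sensitivity0} is to combine the essential boundedness of $\fmap$ under $\QQI_\XC$ with the $\Loss_\infty(\XC\times\YC\times\YY,\QQI)$ property. For $\QQI$-a.e.\ $(x,y)$, $\abs{(Au_\lambda)(x)}=\abs{\fmap(x)^*u_\lambda}\leq\norm{\fmap}_\infty\norm{u_\lambda}=:\rho$, so the value $(Au_\lambda)(x)$ lies in $B(\rho)$. Definition~\ref{def:lossandrisk} furnishes $g_{\rho+1}\in L^1(\XC\times\YC,\QQI;\RR)$ dominating $\ell(x,y,\cdot)$ on $B(\rho+1)$, and Proposition~\ref{prop:Lip2Subrad}\ref{propii:Bound2Lip} converts this supremum bound into a pointwise Lipschitz constant of order $g_{\rho+1}(x,y)$ for $\ell(x,y,\cdot)$ on $B(\rho)$. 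Since $h_\lambda(x,y)\in\partial_{\,\YY}\ell(x,y,(Au_\lambda)(x))$ holds pointwise (after the modification carried out at the end of the proof of Theorem~\ref{thm:genrep}), Proposition~\ref{prop:Lip2Subrad}\ref{propi:Lip2Subrad} bounds $\abs{h_\lambda(x,y)}$ by a multiple of $g_{\rho+1}(x,y)$, yielding $h_\lambda\in L^1(\XC\times\YC,\QQI;\YY^*)$.

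For part~\ref{thm:sensitivity2ii}, the key idea is that the same $h_\lambda$ can be reused to produce an element of $\partial\widetilde{F}(u_\lambda)$ computed with respect to $\QQI$. Replaying the subdifferential computation leading to \eqref{eq:subdiffPsiiota} in the proof of Theorem~\ref{thm:genrep} with $\QQI$ in place of $\QQ$---which is legitimate since $\ell\in\Loss_\infty(\XC\times\YC\times\YY,\QQI)$ and $\fmap\in L^\infty[\XC,\QQI_\XC;\LL(\YY^*,\FF^*)]$---and invoking part~\ref{e:sensitivity0} for integrability, one obtains $\EE_{\QQI}(\fmap h_\lambda)\in\partial\widetilde{F}(u_\lambda)$. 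A parallel application of Theorem~\ref{thm:genrep} to $\widetilde{F}+\lambda G$ at the approximate minimizer $\tilde u_\lambda$ produces $\tilde e^*\in B(\tilde\epsilon)$, $\tilde f_u\in\partial\widetilde{F}(\tilde u_\lambda)$, and $\tilde g_u\in\partial G(\tilde u_\lambda)$ satisfying $\tilde e^*=\tilde f_u+\lambda\tilde g_u$.

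Next, I would pair the total convexity of $G$ at $u_\lambda$ with the subgradient inequality for $G$ at $\tilde u_\lambda$. Setting $g_u=(e^*-\EE_\QQ(\fmap h_\lambda))/\lambda\in\partial G(u_\lambda)$ from \eqref{eq:genrep}, total convexity gives
\begin{equation}
\psi(u_\lambda,\norm{\tilde u_\lambda-u_\lambda})\leq G(\tilde u_\lambda)-G(u_\lambda)-\pair{\tilde u_\lambda-u_\lambda}{g_u},
\end{equation}
while the subgradient inequality at $\tilde u_\lambda$ yields $G(\tilde u_\lambda)-G(u_\lambda)\leq\pair{\tilde u_\lambda-u_\lambda}{\tilde g_u}$. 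Multiplying by $\lambda$ and substituting, then adding and subtracting $\EE_{\QQI}(\fmap h_\lambda)$, decomposes the right-hand side into a piece controlled by $\norm{\tilde e^*-e^*}\leq\epsilon+\tilde\epsilon$, the desired term $\EE_\QQ(\fmap h_\lambda)-\EE_{\QQI}(\fmap h_\lambda)$, and a remainder $\pair{\tilde u_\lambda-u_\lambda}{\EE_{\QQI}(\fmap h_\lambda)-\tilde f_u}$ which is nonpositive by monotonicity of $\partial\widetilde{F}$. Cauchy--Schwarz followed by division by $\lambda\norm{\tilde u_\lambda-u_\lambda}$ (the case $\tilde u_\lambda=u_\lambda$ being vacuous) and the definition \eqref{eq:hatnotation} deliver the claimed bound.

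The crux---and the reason the statement features a single dual variable $h_\lambda$ rather than two distinct ones attached to $u_\lambda$ and $\tilde u_\lambda$---is the reuse of $h_\lambda$ to populate $\partial\widetilde{F}(u_\lambda)$; thus part~\ref{e:sensitivity0} is not merely a technical preliminary but the device that makes the monotonicity cancellation of $\tilde f_u$ possible. The main obstacle to anticipate is therefore verifying that the pointwise version of $h_\lambda$ (with its subgradient inclusion valid everywhere, not just $\QQ$-a.e.) remains measurable and dominated under $\QQI$ even when $\QQI\not\ll\QQ$, which is precisely what the bound by $g_{\rho+1}$ above secures.
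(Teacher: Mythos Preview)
Your proof is correct and rests on the same core ideas as the paper's: for part~\ref{e:sensitivity0}, bounding $|h_\lambda|$ pointwise via the $\Loss_\infty(\QQI)$ envelope together with Proposition~\ref{prop:Lip2Subrad}; for part~\ref{thm:sensitivity2ii}, the observation that the \emph{same} $h_\lambda$ yields a subgradient of $\widetilde F$ at $u_\lambda$.

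The organization of part~\ref{thm:sensitivity2ii} differs mildly. The paper works at the function-value level: it integrates the pointwise subgradient inequality for $\ell(x,y,\cdot)$ against $\QQI$ to get $\pair{\tilde u_\lambda-u_\lambda}{\EE_{\QQI}(\fmap h_\lambda)}\leq\widetilde F(\tilde u_\lambda)-\widetilde F(u_\lambda)$, adds this to the total-convexity inequality for $G$ at $u_\lambda$, and then bounds the resulting difference $(\widetilde F+\lambda G)(\tilde u_\lambda)-(\widetilde F+\lambda G)(u_\lambda)$ by $\tilde\epsilon\,\|\tilde u_\lambda-u_\lambda\|$ using a single subgradient $\tilde e^*\in\partial(\widetilde F+\lambda G)(\tilde u_\lambda)$. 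Your route instead splits $\tilde e^*=\tilde f_u+\lambda\tilde g_u$ via the sum rule, pairs the total-convexity bound with the subgradient inequality for $G$ at $\tilde u_\lambda$, and eliminates the $\widetilde F$-contribution through monotonicity of $\partial\widetilde F$. Both are valid; the paper's version avoids the sum-rule decomposition and the monotonicity step, so it is marginally more economical, but there is no substantive conceptual gap between the two.
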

\begin{proof}
\ref{e:sensitivity0}:
Let $\gamma$ be the norm of $\fmap$ in $L^\infty[\XC, \QQI_{\XC};
\LL(\YY,\ZZ)]$
and let  $\rho\in\left]\gamma \norm{u_{\lambda}},+\infty\right[$.  
Since  $\ell\in\Loss_\infty(\XC\times\YC\times\YY,\QQI)$, there
exists $g\in L^1(\XC\times\YC, \QQI;\RR)$ such that
\begin{equation}\label{eq:boundloss}
(\forall\, (x,y)\in\XC \times \YC)(\forall\, \ww\in B(\rho+1)
)\quad \ell(x,y,\ww)\leq g(x,y).
\end{equation}
Let $(x,y)\in\XC\times\YC$ be such that $\norm{\fmap(x)}\leq
\gamma$. Then
$\abs{(Au_\lambda)(x)}\leq
\norm{\fmap(x)} \norm{u_{\lambda}}\leq
\gamma \norm{u_{\lambda}}<\rho
$.
Therefore, since $h_{\lambda}(x,y)\in\partial_{\,\YY}
\ell(x,y,(Au_{\lambda})(x))$, it follows from
Proposition~\ref{prop:Lip2Subrad}%
\ref{propi:Lip2Subrad}-\ref{propii:Bound2Lip} 
and \eqref{eq:boundloss} that 
$\abs{h_\lambda(x,y)}
\leq 2 \sup \ell(x,y,  B(\rho+1))\leq 2 g(x,y)$.
Hence $h_{\lambda}\in L^1(\XC\times\YC, \QQI;\YY^*)$.

\ref{thm:sensitivity2ii}:
Let $(x,y)\in\XC\times\YC$. Since
$h_{\lambda}(x,y)\in\partial_{\,\YY}
\ell(x,y, (Au_{\lambda})(x))$, we have
\begin{align}\label{eq:sensitivity3}
\nonumber \pair{ \tilde{u}_{\lambda}-u_{\lambda}}{\fmap(x)
h_{\lambda}(x,y)} &=\pair{(A \tilde{u}_{\lambda})(x)-(A
u_{\lambda})(x)}{h_\lambda(x,y)}\\
&\leq \ell(x,y, (A \tilde{u}_{\lambda})(x))-\ell(x,y,(A
u_{\lambda})(x)).
\end{align}
Since $\fmap$ is $\QQI_\XC$-essentially bounded and 
$h_{\lambda}\in L^1(\XC\times\YC, \QQI; \YY^*) $, 
$\fmap h_\lambda$ is $\QQI$-integrable. 
Integrating 
\eqref{eq:sensitivity3} with respect to $\QQI$ yields
\begin{equation}
\pair{ \tilde{u}_{\lambda}-u_{\lambda}}{\EE_{\QQI} (\fmap
h_\lambda)}\leq \widetilde{R}(A
\tilde{u}_{\lambda})-\widetilde{R}(A u_{\lambda}).
\end{equation}
Moreover, \eqref{eq:genrep} and \eqref{eq:totconvineq2} yield
\begin{equation}
\pair{\tilde{u}_{\lambda}-u_{\lambda}}{e^*-\EE_\QQ (\fmap
h_\lambda)}+\lambda \psi(u_{\lambda},
\norm{\tilde{u}_{\lambda}-u_{\lambda}})\leq \lambda
 G(\tilde{u}_{\lambda})-\lambda  G(u_{\lambda}).
\end{equation}
Summing the last two inequalities we obtain
\begin{align}\label{eq:sensitivity}
\nonumber \pair{\tilde{u}_{\lambda}-u_{\lambda}}{\EE_{\QQI} (\fmap
h_\lambda)-\EE_\QQ (\fmap h_\lambda)+e^*} &+ \lambda
\psi(u_{\lambda}, \norm{\tilde{u}_{\lambda}-u_{\lambda}}) \\
&\leq (\widetilde{F}+\lambda
 G)(\tilde{u}_{\lambda})-(\widetilde{F}+\lambda
 G)(u_{\lambda}).
\end{align}
Since there exists $\tilde{e}^*\in\FF^*$ such that 
$\norm{\tilde{e}^*}\leq\tilde{\epsilon}$ and 
$\pair{u_{\lambda}-\tilde{u}_{\lambda}}{\tilde{e}^*}
\leq (\widetilde{F}+\lambda  G)(u_{\lambda})-
(\widetilde{F}+\lambda  G)(\tilde{u}_{\lambda})$,
we have $(\widetilde{F}+\lambda  G)
(\tilde{u}_{\lambda})-(\widetilde{F}+\lambda  G)
(u_{\lambda})\leq \tilde{\epsilon}
\norm{u_{\lambda}-\tilde{u}_{\lambda}}$.
This, together with \eqref{eq:sensitivity}, yields
\begin{equation}
\lambda \psi(u_{\lambda},
\norm{\tilde{u}_{\lambda}-u_{\lambda}})\leq
(\epsilon+\tilde{\epsilon}) \norm{
\tilde{u}_{\lambda}-u_{\lambda}}+\norm{\EE_{\QQI} (\fmap
h_\lambda)-\EE_\QQ (\fmap h_\lambda)}
\norm{\tilde{u}_{\lambda}-u_{\lambda}}
\end{equation}
and the statement follows.
\end{proof}

\section{Learning via regularization}
\label{sec:Lvr}

We study statistical learning in Banach spaces 
and present the main results of the paper. 

\subsection{Consistency theorems}
\label{sec:setting}

We first formulate our assumptions. They involve the feature map 
from Definition~\ref{def:fmap}, as well as the loss and the risk
introduced in Definition~\ref{def:lossandrisk}.

\begin{assumption}\
\label{asssec4}
\vspace{-2ex}
\begin{enumerate}\setlength{\itemsep}{1pt} 
\item
\label{asssec4i}
$(\Omega,\mathfrak{S},\PO)$ is a complete probability space,
$\XC$ and $\YC$ are two nonempty sets, $\mathfrak{A}$ is 
a sigma algebra on $\XC\times\YC$ containing the singletons,
$(X,Y)\colon(\Omega,\mathfrak{S},\PO)\to
(\XC\times\YC,\mathfrak{A})$
is a random variable with distribution $\PP$ on
$\XC\times\YC$, and $\PP$ has marginal $\PP_\XC$ on $\XC$.
\item
\label{asssec4ii}
$\YY$ is a separable reflexive real Banach space, 
$\ell\in\Loss_{\YY,\text{loc}}(\XC\times\YC\times\YY)$, 
$R\colon\MC(\XC,\YY)\to\RPX$ is the risk associated with 
$\ell$ and $\PP$, and there exists 
$f\in L^\infty(\XC,\PP_{\XC};\YY)$ such that $R(f)<+\infty$. 
For every $\rho\in\RPP$, $\lip{\ell}{\rho}$ is as in
\eqref{e:loclip}.
\item
\label{ass:constr}
$\C$ is a nonempty convex subset of $\MC(\XC,\YY)$.
\item
\label{asssec4iii}
$\FF$ is a separable reflexive real Banach space, 
$q\in [2,\pinf[$, $\FF^*$ is of Rademacher type 
$q^*$ with Rademacher type constant $T_{q^*}$.
\item
$A\colon\FF\to \MC(\XC,\YY)$ is linear and continuous with respect 
to pointwise convergence on $\YY^\XC$, $\fmap$ is the feature 
map associated with $A$, 
$\fmap\in L^\infty[\XC,\PP_\XC;\LL(\YY^*,\FF^*)]$.
\item
\label{asssec4III}
$G\in\Gamma_0^+(\FF)$, $ G(0)=0$, the modulus of total convexity 
of $G$ is $\psi$, $\psi_0=\psi(0,\cdot)$, and $G$ is totally 
convex on bounded sets.
\item
$(\lambda_n)_{n\in\NN}$ is a sequence in $\RPP$ such that
$\lambda_n\to 0$. 
\item
\label{asssec5i}
$(X_i,Y_i)_{i\in\NN}$ is a sequence of independent copies of
$(X,Y)$. For every $n\in\NN\smallsetminus\!\{0\}$, 
$Z_n=(X_i,Y_i)_{1\leq i\leq n}$ and
\begin{equation}
\label{eq:emp-risk}
R_n\colon\MC(\XC,\YY)\times(\XC\times\YC)^n 
\to\RP\colon 
(f,(x_1,y_1),\ldots, (x_n,y_n) )\mapsto
\frac{1}{n}\sum_{i=1}^n\ell(x_i,y_i,f(x_i)).
\end{equation}
The function $\varepsilon \colon \RPP\to [0,1]$ satisfies
$\lim_{\lambda\to 0^+}\varepsilon(\lambda)=0$.
For every $n\in\NN\smallsetminus\{0\}$ and every $\lambda\in\RPP$, 
the function $u_{n,\lambda}\colon(\XC \times \YC)^n\to \FF$ 
satisfies
\begin{equation}
\label{eq:learnalgo}
(\forall z\in (\XC \times \YC)^n)
\quad u_{n,\lambda}(z)\in 
\Argmin_\FF^{\varepsilon(\lambda)}(R_n(A\cdot,z)+\lambda G).
\end{equation}
\end{enumerate}
\end{assumption}

In the context of learning theory, $\XC$ is the input space and $\YC$
is the output space, which can be considered to be embedded in the
ambient space $\YY$. The probability distribution $\PP$ describes a
functional relation from $\XC$ into $\YC$ and $R$ quantifies the
expected loss of a function $f\colon\XC\to\YY$ with respect to
the underlying distribution $\PP$. 
The set $\C$ models a priori constraints. 
Since $\mathcal{M}(\XC,\YY)$ is poorly structured, 
measurable functions are handled via the Banach feature space
$\FF$ and the feature map $\fmap$.
Note that the resulting space of functions is only a pre-reproducing
kernel Banach space in the sense of \cite{Song13}, since a kernel is not
required. 
Under the provision that the range of $A$ is 
universal relative to $\C$ (see Definition~\ref{def:universal}) 
every function $f\in\C$ can be approximately represented by a 
feature $u\in\FF$ via $f\approx Au$. Since the true risk $R$ depends
on $P$, which is unknown, the empirical risk $R_n$ is constructed 
from the available data, namely a realization of $Z_n$.
In \eqref{eq:learnalgo}, $u_{n,\lambda}$ is obtained 
by approximately minimizing a regularized empirical risk. 
Regularization is achieved by the addition of the convex
function $G$, which will be asked to fulfill certain compatibility 
conditions with the constraint set $\C$, 
e.g., $\overline{\dom  G}=A^{-1}(\C)$. 
The objective of our analysis can be stated as follows.

\begin{problem}[consistency]
Consider the setting of Assumption~\ref{asssec4}. 
The problem is to approach the infimum of 
the risk $R$ on $\C$ by means of approximate solutions 
\begin{equation}
\label{eZhd7rT9-30a}
u_{n,\lambda_n}(Z_n)\in 
\Argmin_\FF^{\varepsilon(\lambda_n)}(R_n(A\cdot,Z_n)+\lambda_n G)
\end{equation}
to the empirical regularized problems
\begin{equation}
\label{eq:empminriskbis}
\minimize{u\in\FF}{R_n(A u, Z_n)}+\lambda_n  G(u),
\end{equation}
in the sense that $R(A u_{n,\lambda_n}(Z_n)) \to\inf R(\C)$  in
probability (\emph{weak consistency}) or almost surely 
(\emph{strong consistency}), under suitable conditions on 
$(\lambda_n)_{n\in\NN}$.
\end{problem}

\begin{definition}
\label{def:goodconstraints}
Let $p\in [1,+\infty]$. Then $\C$ in Assumption~\ref{asssec4} is 
\emph{$p$-admissible} if $\C\subset L^p(\XC,\PP_{\XC};\YY)$, or if
$\C\cap L^p(\XC, \PP_{\XC};\YY) \neq \emp$ and there exists a 
family $(\mathsf{C}(x))_{x\in\XC}$ of closed convex subsets of 
$\YY$ such that 
$\C=\menge{f\in\MC(\XC, \YY)}{(\forall x\in\XC)\;f(x)\in
\mathsf{C}(x)}$.
\end{definition}

We are now ready to state the two main results of the paper
(see Section~\ref{sec:proofs} for proofs).

\begin{theorem}
\label{thm:main}
Suppose that Assumption~\ref{asssec4} holds, set 
$\varsigma=\norm{\fmap}_{\infty}$, and write
$\varepsilon=\varepsilon_1\varepsilon_2$, where
$\varepsilon_1$ and 
$\varepsilon_2$ are functions from $\RPP$ to $[0,1]$.
Let $p \in [1,+\infty]$ and 
suppose that $\ell \in \Loss_p(\XC\times\YC\times\YY,\PP)$, 
that $\C$ is $p$-admissible, that $\ran A$ is 
$p\verb0-0$universal relative to $\C$, and that
$A(\dom G)\subset\C\cap\ran A\subset\overline{A(\dom G)}$,
where the closure is in $L^p(\XC,\PP_\XC;\YY)$. 
Then the following hold:
\vspace{-2ex}
\begin{enumerate}\setlength{\itemsep}{1pt} 
\item
\label{thm:maini}
Assume that $\ell(\cdot,\cdot,0)$ is bounded and let 
$(\forall n\in\NN)$ $\rho_n \in \big[\psi_0^\natural\big(
(\norm{\ell(\cdot,\cdot,0)}_\infty +1)/\lambda_n\big), +\infty\big[\,$.
Suppose that 
\begin{equation}
\label{eq:conscond}
\lip{\ell}{\varsigma\rho_n}\varepsilon_1(\lambda_n)\to 0
\quad\text{and}\quad\varepsilon_2(\lambda_n)=
O\bigg(\frac{\lip{\ell}{\varsigma\rho_n}}{n^{1/q}}\bigg),
\end{equation}
and that
\begin{equation}
\label{eq:conscond2}
(\forall{\tau}\in\RPP)\quad \lip{\ell}{\varsigma\rho_n}
(\widehat{\psi}_{\rho_n})^\natural \bigg( 
\frac{{\tau}\lip{\ell}{\varsigma\rho_n}}{\lambda_n n^{1/q}}\bigg)
\to 0.
\end{equation}
Then $R(A u_{n,\lambda_n}(Z_n))\overset{\PO^*}{\to}\inf R(\C)$.
Moreover, if 
\begin{equation}
\label{eq:conscond2s}
(\forall{\tau}\in\RPP)\quad\lip{\ell}{\varsigma\rho_n}
(\widehat{\psi}_{\rho_n})^\natural\bigg( 
\frac{{\tau}\lip{\ell}{\varsigma \rho_n} \log n}{\lambda_n n^{1/q}} 
\bigg)\to 0,
\end{equation}
then $R(A u_{n,\lambda_n}(Z_n))\to\inf R(\C)$
$\PO^*\!\verb0-0\text{a.s.}$
\item
\label{thm:maini+}
Assume that $p\in\left]1,+\infty\right[$ and that the function $b$ 
associated with $\ell$ in 
Definition~\ref{def:lossandrisk}\ref{def:Gpi} is bounded, and let 
$(\forall n\in\NN)$ $\rho_n \in \big[\psi_0^\natural\big(
(\norm{\ell(\cdot,\cdot,0)}_\infty +1)/\lambda_n\big), +\infty\big[\,$.
Suppose that 
\begin{equation}
\label{eZhd7rT9-06r}
\rho^{\,p-1}_{n}\varepsilon_1(\lambda_n)\to 0, \;
\varepsilon_2(\lambda_n)=O\Big(\frac{
\rho^{\,p-1}_{n}}{n^{1/q}}\Big),
\;\text{and}\;
(\forall{\tau}\in\RPP)\; \rho^{\,p-1}_{n}
(\widehat{\psi}_{\rho_n})^\natural\bigg(
\frac{{\tau}\rho^{\,p-1}_{n}}{\lambda_n n^{1/q}} \bigg) \to 0.
\end{equation}
Then $R(Au_{n,\lambda_n}(Z_n))\overset{\PO^*}{\to}\inf R(\C)$.
Moreover, if 
\begin{equation}
\label{eZhd7rT9-06t}
(\forall{\tau}\in\RPP)\quad\rho^{\,p-1}_{n}
(\widehat{\psi}_{\rho_n})^\natural\bigg(
\frac{{\tau}\rho^{\,p-1}_{n} \log n}{\lambda_n n^{1/q}} \bigg) \to 0,
\end{equation}
then $R(Au_{n,\lambda_n}(Z_n))\to\inf R(\C)$
$\PO^*\!\verb0-0\text{a.s.}$
\item 
\label{thm:mainiii}
Assume that $p=1$ and let
$(\forall n\in\NN)$ $\rho_n \in \big[\psi_0^\natural((R(0)+1)/\lambda_n),+\infty\big[\,$.
Suppose that 
\begin{equation}
\label{eq:conscond_l1}
\varepsilon_1(\lambda_n) \to 0, \ \ 
\varepsilon_2(\lambda_n)=O\Big(\frac{1}{  n^{1/q}} \Big),
\quad\text{and}\quad
(\forall{\tau} \in\RPP)\ \ 
(\widehat{\psi}_{\rho_n})^\natural\bigg(
\frac{{\tau}}{\lambda_n n^{1/q}} \bigg) \to 0.
\end{equation}
Then $R(A u_{n,\lambda_n}(Z_n)) \overset{\PO^*}{\to}\inf R(\C)$.
Moreover, if 
\begin{equation}
\label{eq:conscond_l13}
(\forall{\tau}\in\RPP)\quad
(\widehat{\psi}_{\rho_n})^\natural\bigg(
\frac{{\tau} \log n}{\lambda_n n^{1/q}} \bigg) \to 0,
\end{equation}
then $R(A u_{n,\lambda_n}(Z_n)) \to\inf R(\C)$
$\PO^*\!\verb0-0\text{a.s.}$
\item
\label{thm:mainii}
Suppose that $S=\Argmin_{\dom  G}(R\circ A)\neq\emp$. Then 
there exists a unique $u^\dag\in S$ which minimizes $ G$ on 
$S$; moreover, $Au^\dag\in\C$ and $R(Au^\dag)=\inf R(\C)$. 
Furthermore, suppose that the following conditions are satisfied:
\setcounter{enumi}{2}
\begin{equation}
\label{str_conscond}
\varepsilon_1(\lambda_n) \to 0, \quad
\frac{\varepsilon_2(\lambda_n)}{\lambda_n}\to 0,
\quad\text{and}\quad \frac{1}{\lambda_n n^{1/q}} \to 0\,.
\end{equation}
Then $\norm{u_{n,\lambda_n}(\ZZZ_n)-u^\dag}\overset{\PO^*}{\to}0$ 
and $R(Au_{n,\lambda_n}(Z_n))\overset{\PO^*}{\to} R(A u^\dag)$. 
Finally, suppose in addition that 
\begin{equation}
\label{e:34789v624389}
(\log n)/(n^{1/q}\lambda_n) \to 0\,.
\end{equation}
Then $\norm{u_{n,\lambda_n}(\ZZZ_n)-u^\dag} \to 0$ 
$\PO^* \verb0-0\text{a.s.}$ and $R(A u_{n, \lambda_n}(Z_n)) 
\to R(A u^\dag)$ $\PO^* \verb0-0\text{a.s.}$
\end{enumerate}
\end{theorem}

\begin{remark}\ 
\vspace{-2ex}
\begin{enumerate}\setlength{\itemsep}{1pt} 
\item
In the setting of Example~\ref{ex:distbased}, 
$\ell(\cdot,\cdot,0)$ is bounded if $\YC$ is a bounded subset of $\YY$.
\item
$A(\dom G)\subset\C\cap \ran A \subset \overline{A (\dom  G)}$
is a compatibility condition between $G$ and $\C$. It is
satisfied in particular when $\overline{\dom G}=A^{-1}(\C)$, since 
$A(A^{-1}(\C))=\C\cap \ran{A}$.
On the other hand, $\ran A$ is trivially
$\infty\verb0-0$universal relative to $\C$ when $\C \subset \ran
A$, or $\ran A\subset\C$ and $\ran A$ is 
$\infty\verb0-0$universal.
\item
For every $\rho \in \RPP$, $\dom (\widehat{\psi}_\rho)^{\natural}$ is 
an interval containing $0$ with nonempty interior.
Indeed,
it follows from Assumption~\ref{asssec4}\ref{asssec4III} that
$\Argmin_{\FF} G\neq\emp$, hence $0\in\dom\partial G$. 
Therefore, Proposition~\ref{p:8}\ref{p:8viii} ensures that, 
for every $\rho\in\RP$,
${\psi}_\rho\in\mathcal{A}_1$. Thus, 
Proposition~\ref{prop:psinatural}\ref{psinat_ina0}
yields
$(\widehat{\psi}_\rho)^{\natural} \in \mathcal{A}_0$ 
and the statement follows from 
Proposition~\ref{prop:psinatural}\ref{psinat_dom}.
\item\label{rmk:forcingiv}
Let $(s_n)_{n\in\NN}$ and $(\rho_n)_{n \in \NN}$ be sequences 
in $\RPP$ and suppose that $\rho=\inf_{n\in\NN}\rho_n >0$. Then 
$(\widehat{\psi}_{\rho_n})^\natural(s_n) \to 0
\Rightarrow s_n \to 0$. Indeed, for every $n\in\NN$,
$\rho\leq \rho_n \Rightarrow
\psi_{\rho_n}\leq\psi_{\rho}\Rightarrow
\widehat{\psi}_{\rho_n}\leq
\widehat{\psi}_{\rho} \Rightarrow
(\widehat{\psi}_{\rho})^\natural\leq
(\widehat{\psi}_{\rho_n})^\natural$.
Therefore $(\widehat{\psi}_{\rho_n})^\natural(s_n) \to 0
\Rightarrow (\widehat{\psi}_{\rho})^\natural(s_n) \to 0
\Rightarrow s_n \to 0$
by Proposition~\ref{prop:psinatural}\ref{psi_forcing}.
\end{enumerate}
\end{remark}

Next we consider an important special case, in which
the consistency conditions can be made explicit.

\begin{corollary}
\label{c:consJnorm} 
Suppose that Assumption~\ref{asssec4} holds, set 
$\varsigma=\norm{\fmap}_{\infty}$, and write
$\varepsilon=\varepsilon_1\varepsilon_2$, where
$\varepsilon_1$ and 
$\varepsilon_2$ are functions from $\RPP$ to $[0,1]$.
Let $p\in [1,+\infty]$ and 
suppose that $\ell\in\Loss_p(\XC\times\YC\times\YY,\PP)$, 
that $\C$ is $p$-admissible, that $\ran A$ is 
$p\verb0-0$universal relative to $\C$, that
$A(\dom G)\subset\C\cap\ran A\subset\overline{A(\dom G)}$,
where the closure is in $L^p(\XC,\PP_\XC;\YY)$. In addition,
assume that 
\begin{equation}
\label{eZhd7rT8-29a}
\begin{cases}
\FF\;\text{is uniformly convex with modulus of
convexity of power type}\;q\\
G=\eta\norm{\cdot}^r+H,
\quad\text{where}\;\eta\in\RPP,\; r\in\left]1,+\infty\right[, 
\;\text{and}\; H\in\Gamma^+_0(\FF).
\end{cases}
\end{equation}
Let $\beta$ be the constant
defined in Proposition~\ref{prop:modnormr}, and set
${m=\max\{r,q\}}$. Then the following holds:
\vspace{-2ex}
\begin{enumerate}\setlength{\itemsep}{1pt} 
\item
\label{c:consJnormi}
Assume that $\ell(\cdot,\cdot,0)$ is bounded and
set $(\forall n\in\NN)$ 
$\rho_n=\big((\norm{\ell(\cdot,\cdot, 0)}_\infty+1)/(\eta
\beta\lambda_n)\big)^{1/r}$.
Suppose that 
\begin{equation}
\label{eZhd7rT9-06x}
\lip{\ell}{\varsigma \rho_n}
\varepsilon_1(\lambda_n) \to 0,\quad
\quad \varepsilon_2(\lambda_n)=
O\bigg(\frac{\lip{\ell}{\varsigma\rho_n}}{n^{1/q}}\bigg),
\quad\text{and}\quad
\dfrac{\lip{\ell}{\varsigma \rho_n}^{m}}
{\lambda_n^{{m/r}} n^{{1}/q}}\to 0.
\end{equation}
Then $R(Au_{n,\lambda_n}(Z_n))\overset{\PO^*}{\to}\inf R(\C)$. 
Moreover, if 
\begin{equation}
\label{eZhd7rT9-06y}
\dfrac{\lip{\ell}{\varsigma \rho_n}^{m} \log n}{\lambda_n^{{m/r}}
n^{{1}/q}}\to 0,
\end{equation}
then $R(A u_{n,\lambda_n}(Z_n)) \to\inf R(\C)$
$\PO^*\!\verb0-0\text{a.s.}$
\item
\label{c:consJnormiii}
Assume that $p\in\left]1,+\infty\right[$, that the function $b$ 
associated with $\ell$ in 
Definition~\ref{def:lossandrisk}\ref{def:Gpi} is bounded, and that
\begin{equation}
\label{eZhd7rT8-29b}
\frac{\varepsilon_1(\lambda_n)}{\lambda_n^{(p-1)/r}} \to 0,\quad  
\varepsilon_2(\lambda_n)=O\bigg(\frac{1}{n^{1/q}
\lambda_n^{(p-1)/r}}\bigg),
\quad\text{and}\quad\frac{1}{\lambda^{p {m/r}}_n n^{{1}/q}}\to 0.
\end{equation}
Then $R(Au_{n,\lambda_n}(Z_n))\overset{\PO^*}{\to}\inf R(\C)$.
Moreover, if 
$(\log n)/(\lambda^{p{m/r}}_n n^{{1}/q})\to 0$,
then $R(Au_{n,\lambda_n}(Z_n))\to\inf R(\C)$
$\PO^*\!\verb0-0\text{a.s.}$
\item
\label{c:consJnormii}
Assume that $p=1$ and that
\begin{equation}
\label{e:nonso1}
\varepsilon_1(\lambda_n) \to 0, \ \ 
\varepsilon_2(\lambda_n)=O\Big(\frac{1}{  n^{1/q}} \Big),
\quad\text{and}\quad
\frac{1}{\lambda_n^{{m/r}} n^{{1}/q}} \to 0.
\end{equation}
Then $R(A u_{n,\lambda_n}(Z_n)) \overset{\PO^*}{\to}\inf R(\C)$.
Moreover, if $(\log n)/(\lambda_n^{{m/r}} n^{{1}/q})  \to 0$,
then $R(A u_{n,\lambda_n}(Z_n)) \to\inf R(\C)$
$\PO^*\!\verb0-0\text{a.s.}$
\end{enumerate}
\end{corollary}

\begin{remark}
Corollary~\ref{c:consJnorm} shows that consistency is achieved
when the sequence of regularization parameters
$(\lambda_n)_{n\in\NN}$ converges to zero not too fast.
The upper bound depends on the 
power type of the modulus of convexity of the feature space, the
exponent of the norm in the regularizer, and the Lipschitz behavior
of the loss. Note that a faster decay of $(\lambda_n)_{n\in\NN}$
is allowed when $q=2$.
\end{remark}

\begin{remark} The class of regularizers considered in Corollary~\ref{c:consJnorm} 
includes the elastic-net penalty both in the setting of generalized linear models \cite{Demo09}
and multiple kernel learning \cite{Suzu2013}.
The proofs of Theorem~\ref{thm:main} 
and Corollary~\ref{c:consJnorm} are based on a stability analysis,
which combines the sensitivity Theorem~\ref{thm:sensitivity2} with a 
Banach space-valued Hoeffding's inequality (Theorem~\ref{thm:Hoeffding}).  
The strength of such a method is that it can be applied in very general
situations, since it does not require any structure on the input space and no
hypotheses on the probability measure. We highlight that the analysis 
can be applied even to unbounded output spaces if Hoeffding's inequality 
is replaced by Markov's inequality.
\end{remark}

\begin{remark}
\label{rmkIuh7td14}
In the setting of general regularizers and/or Banach feature spaces,
the literature on consistency of regularized empirical risk 
minimizers is scarce.
\begin{enumerate}
\item  
In \cite[Theorem 7.20]{SteChi2008} only continuous, real-valued regularizers 
are considered and consistency is established under the provision that 
local Rademacher complexities can be suitably bounded and 
an appropriate variance bound holds \cite{Bart2005}. 
However, it is not clear whether this result is useful for 
other regularizers apart from the squared norm.
\item 
A well-studied method to prove consistency of regularized empirical 
risk minimization
is based on covering numbers \cite{CuZh2007,Wang11,Wu2006}.
However, it should be stressed that the application of such method 
in the vector-valued setting would require the following additional
assumptions:
(a) the input space is a locally compact topological space and the
feature
map is continuous with respect to the uniform operator topology 
and takes compact operators as values 
(this implies the finiteness of the related covering numbers); 
(b) the covering numbers decay polynomially 
(this usually requires smooth kernels); and (c)
an appropriate variance bound for the loss is available.
\item 
In \cite{Song2011}, the consistency of an $\ell^1$-regularized 
empirical risk minimization scheme is studied in a particular 
type of Banach spaces of functions, in which a linear representer 
theorem is shown to hold. 
Note that, in general reproducing kernel Banach 
spaces, the representation is not linear; see 
Corollary~\ref{cor:repthm} and \cite{Zhang2012,Zhang2013}.
In \cite{Stei09}, consistency and learning rates are 
provided for classification problems and $G=\|\cdot\|$,
under appropriate growth assumptions on 
the average empirical entropy numbers. 
\item
In \cite{MauPon12} a class of regularizers inducing structured 
sparsity is considered and associated statistical bounds are provided.
\end{enumerate}
\end{remark}

We complete this section by providing an illustration of the above 
consistency theorems to learning with dictionaries in the context
of Example~\ref{ex:dictionary}. The setting will be a specialization
of Assumption~\ref{asssec4} to specific types of feature maps and 
regularizers. Our analysis extends in several directions that of 
\cite{Demo09}.

\begin{example}[Generalized linear model]
\label{exa:2014}
Suppose that 
Assumption~\ref{asssec4}\ref{asssec4i}-\ref{ass:constr} hold.
Let $\mathbb{K}$ be a nonempty at most countable set,
let $r\in\left]1,\pinf\right[$, and let $\FF=l^{r}(\KK)$.
Let $\varsigma\in\RPP$, let 
$(\dictfunc_k)_{k\in\KK}$ be a dictionary of functions in
$\mathcal{M}(\XC,\YY)$ such that, for 
$\PP_\XC\verb0-0\text{a.a.}~x\in\XC$,
$\sum_{k\in\KK}\abs{\dictfunc_k(x)}^{r^*}\leq
\varsigma^{r^*}$, and set 
\begin{equation}
\label{eq:fmap_dictionary0}
A\colon\FF\to
\YY^{\XC}\colon u=(\mu_k)_{k\in\KK}
\mapsto\sum_{k\in\KK}\mu_k \dictfunc_k\quad\text{(pointwise)}.
\end{equation}
Let $\fmap\colon\XC\to l^{r^*}(\KK;\YY)\colon x\mapsto 
(\dictfunc_k(x))_{k\in\KK}$
be the associated feature map.
For every $k\in\KK$, let $\eta_k\in\RP$ and let 
$h_k\in\Gamma^+_0(\RR)$ be
such that $h_k(0)=0$. Define
\begin{equation}
\label{e:genova2012-10-05h}
G\colon \FF \to [0,+\infty]\colon u=(\mu_k)_{k\in\KK}
\mapsto\sum_{k\in\KK}g_k(\mu_k),
\quad\text{where}\quad(\forall k\in\KK)\quad
g_k=h_k+\eta_k |\cdot|^r.
\end{equation}
Let $(\lambda_n)_{n\in\NN}$ be a sequence in $\RPP$ such that
$\lambda_n\to 0$ and let
$(X_i,Y_i)_{i\in\NN}$ be a sequence of independent copies of
$(X,Y)$. For every $n\in\NN\smallsetminus\!\{0\}$, let
$Z_n=(X_i,Y_i)_{1\leq i\leq n}$, and let
$u_{n,\lambda_n}(Z_n)$ be defined according to 
\eqref{eq:learnalgo} as an approximate minimizer of the 
regularized empirical risk
\begin{equation}
\frac{1}{n}\sum_{i=1}^n
\ell\big(X_i,Y_i,(Au)(X_i)\big)+\lambda_n G(u).
\end{equation}
The above model covers several classical regularization 
schemes, such as the Tikhonov (ridge regression) model 
\cite{Gyorfi02}, the $\ell_1$ or lasso model \cite{Tib96}, the 
elastic net model \cite{Demo09,ZouHastie2005}, the bridge 
regression model \cite{Fu1998,Kol2009}, as well as generalized 
Gaussian models \cite{Anto02}. Furthermore the following hold:
\vspace{-2ex}
\begin{enumerate}\setlength{\itemsep}{1pt} 
\item
$\FF$ is uniformly convex with modulus of convexity of power type 
$\text{max}\{2,r\}$ (see Section~\ref{sec:notation}). 
Moreover, $\ran A\subset\MC(\XC,\YY)$,
\begin{equation}
(\forall x\in\XC)(\forall u\in \FF)\quad\abs{\fmap(x)^* u}
=\abs{(A u)(x)}\leq\norm{u}_r 
\norm{(\dictfunc_k(x))_{k\in\KK}}_{r^*}
\leq\varsigma\norm{u}_r,
\end{equation}
and therefore $\norm{\fmap}_\infty\leq\varsigma$. Now suppose that 
$\inf_{k\in\KK}\eta_k>0$. Then, in view of Proposition
\ref{prop:modnormr}, $ G$ is totally convex on bounded sets.
Altogether, Assumption~\ref{asssec4} holds with 
$q=\text{max}\{2,r\}$.
\item
Let $p\in[1,+\infty]$ and suppose that one of the following holds:
\vspace{-1ex}
\begin{enumerate}\setlength{\itemsep}{1pt} 
\item
\label{rmk:admconstri} 
$\C=A\big(l^r(\KK)\cap\cart_{\!k\in\KK}\dom h_k\big)$. 
\item
\label{rmk:admconstrii}
$\C=\mathcal{M}(\XC,\YY)$ and $\spann\{\dictfunc_k\}_{k\in\KK}$ 
is $p\verb0-0$universal (Definition~\ref{def:universal}). 
\end{enumerate}
Then $\C$ is $p\verb0-0$admissible
(Definition~\ref{def:goodconstraints}), 
$A(\dom G)\subset\C\cap\ran A\subset\overline{A(\dom G)}$ 
(where the closure is in $L^p(\XC,\PP_\XC;\YY)$), 
and $\ran A$ is $p$-universal relative to $\C$.
Indeed, as for \ref{rmk:admconstri}, $\C\subset\ran A\subset
L^p(\XC,\PP_\XC;\YY)$, hence $\C$ is $p$-admissible and 
$\ran A$ is $p$-universal relative to $\C$. 
Moreover, $A(\dom G) \subset\C\subset\overline{A(\dom G)}$ since, 
for every $u\in l^r(\KK)\cap\cart_{k\in\KK}\dom h_k$ and every
$\epsilon\in\RPP$, there exists
$\bar{u}\in\RR^\KK$ with finite support, such that
$\norm{u-\bar{u}}_r\leq\epsilon$ and $\norm{Au-A\bar{u}}_p\leq
\varsigma\norm{u-\bar{u}}_r\leq\varsigma\epsilon$. On the other
hand, it follows from 
Theorem~\ref{prop:C-universality}\ref{prop:C-universalityii} that, 
if $\C=\mathcal{M}(\XC,\YY)$, \ref{rmk:admconstrii} is 
satisfied when $\XC$ is a locally compact topological space and 
$\spann\{\dictfunc_k\}_{k\in\KK}$ is dense in 
$\mathscr{C}_0(\XC,\YY)$ endowed with the uniform topology.
\item
Let $\C$ be as in item~\ref{rmk:admconstri} or 
\ref{rmk:admconstrii}, let $\eta\in\RPP$, and suppose that 
$(\forall k\in\KK)$ $\eta_k\geq\eta$. Then consistency can be
obtained in the setting of Corollary~\ref{c:consJnorm}, 
where $q=\max\{2,r\}=m$.
In particular, in items \ref{c:consJnormiii} and
\ref{c:consJnormii} of Corollary~\ref{c:consJnorm}, we have
$\lambda_n^{p m/r} n^{1/q}=\lambda_n^{p} n^{1/r}$, if $r \geq 2$;
and $\lambda_n^{p m/r} n^{1/q}=\lambda_n^{2p/r} n^{1/2}$, if $r
\leq 2$. 
Moreover, by Theorem~\ref{thm:main}\ref{thm:mainii}, weak consistency
holds if $1/(\lambda_n n^{1/\max\{2,r\}})\to 0$, and strong
consistency holds if 
$(\log n)/(\lambda_n n^{1/\max\{2,r\}}) \to 0$.
\item 
Suppose that $r\in\left]1,2\right]$ and that the loss function is 
differentiable with respect to the third variable. Then, by
exploiting the separability of $G$, for a given sample
size $n$, an estimate $u_{n,\lambda_n}(z_n)$ can be constructed
in $l^2(\KK)$ using proximal splitting algorithms such as those
described in \cite{Siop07,Villa13}.
\end{enumerate}
\end{example}

\begin{remark}
Let us compare the results of Example~\ref{exa:2014} to the 
existing literature on generalized linear models.
\vspace{-2ex}
\begin{enumerate}\setlength{\itemsep}{1pt} 
\item
In the special case when $\KK$ is finite, $r>1$, and 
$G=\|\cdot\|_r^r$, \cite{Kol2009} provides 
an excess risk bound which  depends 
on the dimension of the dictionary (the cardinality of $\KK$)
and the level of sparsity of the regularized risk minimizer;
see \cite{Bulh11} for a recent account of the role of sparsity
in regression.  
\item
In the special case when $r=2$ and, for every $k\in\KK$,
$h_k=w_k\abs{\cdot}$ with $w_k\in\RPP$ in 
\eqref{e:genova2012-10-05h}, we recover the elastic net
framework of \cite{Demo09}. This special case yields a strongly 
convex problem in a Hilbert space.  
In our general setting, the 
exponent $r$ may take any value in $\left]1,+\infty\right[$.
Note also that our
framework allows for the enforcement of hard constraints
on the coefficients since the functions $(h_k)_{k\in\KK}$ are
not required to be real-valued. We highlight that, when 
specialized to the elastic net regularizer, 
Theorem~\ref{thm:main}\ref{thm:mainii} 
guarantees consistency under the same conditions
as in \cite[Theorem 2]{Demo09}.
\end{enumerate}
\end{remark}

\subsection{Proofs of the main results}
\label{sec:proofs}

We start with a few properties of the functions underlying our 
construct. To this end, throughout this subsection, the following
notation will used.

\begin{notation}
\label{dZhd7rT9-02}
In the setting of Assumption~\ref{asssec4}, 
\begin{equation}
F=R\circ A\quad\text{and}\quad
(\forall n\in\NN\smallsetminus\{0\})\quad
F_n\colon\FF\times(\XC\times\YC)^n\to\RP\colon
(u,z)\mapsto R_n(A u,z).
\end{equation}
In addition, $\varsigma=\norm{\fmap}_{\infty}$, and, for every
$n\in\NN\smallsetminus\{0\}$ and $\lambda\in\RPP$, 
\begin{align}
\label{eq:parameters}
\nonumber\alpha_{n,\lambda}\colon\RPP\times\RPP&\to\RP\\
(\tau,\rho)
&\mapsto\frac{\varsigma \lip{\ell}{\varsigma\rho}}{\lambda } 
\bigg(\frac{4 T_{q^*}}{ n^{1/q}}+2\sqrt{\frac{2\tau}{n}}
+\frac{4\tau}{3n}\bigg).
\end{align}
Now let $\tau\in [1,+\infty[$ and $n\in\NN\smallsetminus\{0\}$. 
Then, since $2\sqrt{2 \tau}\leq 1+2\tau \leq 3 \tau$
and $n^{1/q} \leq n^{1/2} \leq n$, we have
\begin{equation}
\label{e:afY34nd93nl10c}
(\forall\, \rho \in \RPP)\quad \alpha_{n,\lambda}(\tau,\rho)
\leq \frac{\tau\varsigma (4 T_{q^*}+5) 
\lip{\ell}{\varsigma\rho}}{\lambda n^{1/q}}
\end{equation}
\end{notation}

\begin{proposition}
\label{p:5}
Suppose that Assumption~\ref{asssec4} is satisfied.
Then the following hold:
\vspace{-2ex}
\begin{enumerate}\setlength{\itemsep}{1pt} 
\item
\label{p:5i}
$F\colon\FF\to\RP$ is convex and continuous.
\item
\label{p:5ii}
Let $n\in\NN\smallsetminus\{0\}$ and 
$z\in(\XC\times\YC)^n$. Then $F_n(\cdot,z)\colon\FF\to\RP$ is 
convex and continuous.
\item
\label{p:5iii}
$G$ is coercive and strictly convex.
\item
\label{p:5iv}
For every $\lambda\in\RPP$, $F+\lambda G$ admits a unique minimizer.
\end{enumerate}
\end{proposition}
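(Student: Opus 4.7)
Parts (i) and (ii) reduce to routine applications of results already in the paper. For (i), I will combine Remark~\ref{r:contrisk}\ref{r:contriskiv}---which upgrades the standing hypothesis $\ell\in\Loss_{\YY,\text{loc}}(\XC\times\YC\times\YY)$ together with the existence of $f\in L^\infty(\XC,\PP_\XC;\YY)$ with $R(f)<+\infty$ to $\ell\in\Loss_\infty(\XC\times\YC\times\YY,\PP)$ and yields that $R\colon L^\infty(\XC,\PP_\XC;\YY)\to\RP$ is convex and continuous---with Proposition~\ref{p:2014-09-03}\ref{p:2014-09-03ii} applied at $p=\infty$, which under $\fmap\in L^\infty[\XC,\PP_\XC;\LL(\YY^*,\FF^*)]$ gives that $A\colon\FF\to L^\infty(\XC,\PP_\XC;\YY)$ is linear and continuous with $\|Au\|_\infty\leq\varsigma\|u\|$. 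The composition $F=R\circ A$ is thus convex and continuous. For (ii), fix $z=((x_i,y_i))_{1\leq i\leq n}$ and rewrite $F_n(u,z)=\frac{1}{n}\sum_{i=1}^n\ell(x_i,y_i,\fmap(x_i)^*u)$ via \eqref{eq:fmap}; each summand is the composition of the continuous linear map $u\mapsto\fmap(x_i)^*u$ from $\FF$ to $\YY$ with the convex continuous function $\ell(x_i,y_i,\cdot)$ from Definition~\ref{def:lossandrisk}, so the finite average is convex and continuous.

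For the strict convexity half of (iii), given $u,v\in\dom G$ with $u\neq v$ and $\alpha\in(0,1)$, I set $w=\alpha u+(1-\alpha)v$ and apply the Bregman-type inequality $G(\cdot)\geq G(w)+G'(w;\cdot-w)+\psi(w,\|\cdot-w\|)$ at both $u$ and $v$. Forming the convex combination and using the sublinearity of $G'(w;\cdot)$ together with $\alpha(u-w)+(1-\alpha)(v-w)=0$ yields $\alpha G(u)+(1-\alpha)G(v)-G(w)\geq\alpha\psi(w,\|u-w\|)+(1-\alpha)\psi(w,\|v-w\|)>0$, where the positivity of the two moduli is guaranteed by the total convexity of $G$ at $w\in\dom G$.

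Coercivity is the delicate part of (iii), and I will argue by contradiction. Suppose a sequence $(u_n)$ in $\dom G$ satisfies $\|u_n\|\to+\infty$ while $\sup_n G(u_n)<+\infty$. Since $G\geq 0$ and $G(0)=0$, one has $0\in\Argmin_\FF G$, whence $G'(0;\cdot)\geq 0$. Fix $R\in\RPP$; for $n$ large enough that $\|u_n\|\geq R$, the points $v_n=Ru_n/\|u_n\|$ satisfy $\|v_n\|=R$ and, by convexity of $G$, $G(v_n)\leq (R/\|u_n\|)G(u_n)\to 0$. On the other hand, the Bregman inequality at $0$ yields $G(v_n)\geq G'(0;v_n)+\psi_0(R)\geq\psi_0(R)$, and $\psi_0(R)\geq\psi_R(R)>0$ since $G$ is totally convex on $B(R)$ and $0\in B(R)\cap\dom G$. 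This produces the required contradiction.

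Finally, for (iv), combining (i), (iii), and the reflexivity of $\FF$ closes the argument: the sum $F+\lambda G$ is convex, lower semicontinuous (as $F$ is continuous and $G\in\Gamma_0^+(\FF)$), strictly convex (since $G$ is so and $F$ is convex), and coercive (since $F\geq 0$ and $G$ is coercive). In a reflexive Banach space the Weierstrass-type existence theorem provides a minimizer, and strict convexity forces uniqueness. The only genuinely nonroutine step in the whole proposition is the coercivity assertion in (iii); the remaining items reduce to bookkeeping built on material developed earlier in the paper.
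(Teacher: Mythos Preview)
Your argument is correct. Parts \ref{p:5i} and \ref{p:5iv} match the paper's proof exactly. For \ref{p:5ii} the paper simply reruns the argument of \ref{p:5i} with $\PP$ replaced by the empirical measure $(1/n)\sum_{i=1}^n\delta_{(x_i,y_i)}$, which amounts to the same thing as your direct summand-by-summand computation. For strict convexity in \ref{p:5iii}, the paper only remarks that it follows from the definition \eqref{def:totconvmodi}; your explicit Bregman-at-$w$ argument spells out that remark.

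The one place where you take a genuinely different route is coercivity. The paper invokes Proposition~\ref{p:8}\ref{p:8ix}, whose proof uses the superhomogeneity $\psi(0,ct)\geq c\,\psi(0,t)$ (Proposition~\ref{p:8}\ref{genincreasing}) to obtain the linear lower bound $G(u)\geq G(0)+\|u\|\,\psi(0,s)/s$ for $\|u\|\geq s$, hence coercivity directly. Your contradiction argument---rescaling an unbounded sequence with bounded $G$-values onto a fixed sphere of radius $R$ and confronting the resulting limit $G(v_n)\to 0$ with the lower bound $G(v_n)\geq\psi_0(R)>0$---avoids the superhomogeneity step and uses only convexity of $G$ together with total convexity at $0$. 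Both arguments are short and elementary; the paper's route yields an explicit growth estimate (useful later), while yours is self-contained within the proof of the proposition.
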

\begin{proof}
\ref{p:5i}:
Remark~\ref{r:contrisk}\ref{r:contriskiv} ensures that
$R\colon L^\infty(\XC,\PP_{\XC};\YY)\to\RP$ is 
convex and continuous. In turn, 
Proposition~\ref{pZhd7rT9-03}\ref{pZhd7rT9-03ii} implies that
$A\colon\FF\to L^\infty(\XC,\PP_{\XC};\YY)$ is continuous.

\ref{p:5ii}: The argument is the same as above, except that
$\PP$ is replaced by the empirical measure 
$(1/n)\sum_{i=1}^n\delta_{(x_i,y_i)}$, where
$z=(x_i,y_i)_{1\leq i\leq n}$.

\ref{p:5iii}: 
It follows from Assumption~\ref{asssec4}\ref{asssec4III} and
Proposition~\ref{p:8}\ref{p:8ix} that $G$ is coercive; 
its strict convexity follows from  the
definition in \eqref{def:totconvmodi}.

\ref{p:5iv}: By \ref{p:5i} and \ref{p:5iii}, $F+\lambda G$ is 
a strictly convex coercive function in $\Gamma_{0}^+(\FF)$.
It therefore admits a unique minimizer 
\cite[Theorem~2.5.1(ii) and Proposition~2.5.6]{Zali02}.
\end{proof}

The strategy of the proof of Theorem~\ref{thm:main} is to
split the error in three parts, i.e.,
\begin{multline}
\label{spliterror}
R(A u_{n,\lambda}(Z_n))-\inf R (\C)\\
=(F(u_{n,\lambda}(Z_n))-F(u_\lambda))+(F(u_\lambda)-\inf F(\dom G)) 
+(\inf F(\dom G)-\inf R (\C)),\\
\quad\text{where}\quad u_\lambda=\argmin_{\FF}(F+\lambda G).
\end{multline}
Note that Proposition~\ref{p:5}\ref{p:5iv} ensures that
$u_\lambda$ is uniquely defined. 
The first term on the right-hand side of \eqref{spliterror} is
known as the sample error and the second term as the approximation 
error. Proposition~\ref{p:4}\ref{p:4i} 
ensures that the approximation error goes to zero as
$\lambda\to 0$.
Below, we start by showing that $\inf R(\C)-\inf F(\dom G)=0$, 
if $\ran A$ is universal with respect
to $\C$ and some compatibility conditions
between $G$  and $\C$ hold. Next, we study the sample error. 
Note that $F(u_{n,\lambda}(Z_n))-F(u_\lambda)$ 
may not be measurable, hence the convergence
results are given with respect to the outer probability $\PO^*$.

\begin{proposition}
\label{prop:infriskonLp} 
Let $\XC$ and $\YC$ be nonempty sets, let $(\XC\times\YC,
\mathfrak{A}, \PP)$ be a probability space, let $\PP_\XC$ be the
marginal of $\PP$ on $\XC$, and
let $\YY$ be a separable reflexive real Banach space. Let
$\ell\in\Loss(\XC\times\YC,\YY)$, and let
$R\colon\MC(\XC,\YY)\to\RPX$ be the risk associated with $\ell$ and
$\PP$. Let $\C \subset \MC(\XC,\YC)$ be  nonempty and convex.
Let $p\in [1,+\infty]$ and assume that $\C$ is 
$p\verb0-0$admissible and that there exists 
$g\in\C\cap L^p(\XC, \PP_{\XC}; \YY)$ such that $R(g)<+\infty$. Then
$\inf R (\C)=\inf R (\C\cap L^p(\XC, \PP_{\XC}; \YY))$.
\end{proposition}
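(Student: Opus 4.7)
The inequality $\inf R(\C\cap L^p(\XC,\PP_\XC;\YY))\geq\inf R(\C)$ is immediate from $\C\cap L^p\subset\C$, so the substance is the reverse inequality. I will establish it by showing that every $f\in\C$ with $R(f)<+\infty$ can be approximated, in the risk value, by a sequence in $\C\cap L^p(\XC,\PP_\XC;\YY)$. If $\C\subset L^p(\XC,\PP_\XC;\YY)$ (the first alternative in the definition of $p$-admissibility) the statement is trivial, so I focus on the second alternative, in which $\C$ has the pointwise form $\C=\{f\in\MC(\XC,\YY)\mid(\forall x\in\XC)\,f(x)\in\mathsf{C}(x)\}$ with each $\mathsf{C}(x)$ closed and convex.

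Fix $g\in\C\cap L^p(\XC,\PP_\XC;\YY)$ with $R(g)<+\infty$ (the hypothesis provides such a $g$ for $f=g$, and if no other $f\in\C$ has $R(f)<+\infty$ then there is nothing to prove). Let $f\in\C$ satisfy $R(f)<+\infty$. Define, for every $n\in\NN$,
\begin{equation}
A_n=\menge{x\in\XC}{\abs{f(x)}\leq n}\quad\text{and}\quad
f_n=\mathbf{1}_{A_n}f+\mathbf{1}_{\XC\smallsetminus A_n}g.
\end{equation}
Since $f,g\in\MC(\XC,\YY)$, the set $A_n$ is measurable and $f_n\in\MC(\XC,\YY)$. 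For each $x\in\XC$, the value $f_n(x)$ is either $f(x)$ or $g(x)$, both of which lie in $\mathsf{C}(x)$; thus $f_n\in\C$. Moreover $f_n\in L^p(\XC,\PP_\XC;\YY)$: if $p<+\infty$ then $\int\abs{f_n}^p\,\ud\PP_\XC\leq n^p+\norm{g}_p^p<+\infty$, and if $p=+\infty$ then $\norm{f_n}_\infty\leq\max\{n,\norm{g}_\infty\}<+\infty$. Hence $f_n\in\C\cap L^p(\XC,\PP_\XC;\YY)$ for every $n$.

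It remains to pass to the limit in the risk. For every $x\in\XC$ one has $\abs{f(x)}<+\infty$, so $x\in A_n$ for all sufficiently large $n$; consequently $f_n(x)\to f(x)$. Since $\ell(x,y,\cdot)$ is continuous for every $(x,y)\in\XC\times\YC$, it follows that $\ell(x,y,f_n(x))\to\ell(x,y,f(x))$ pointwise on $\XC\times\YC$. The domination
\begin{equation}
(\forall (x,y)\in\XC\times\YC)(\forall n\in\NN)\quad
0\leq\ell(x,y,f_n(x))\leq\ell(x,y,f(x))+\ell(x,y,g(x))
\end{equation}
holds because $f_n(x)\in\{f(x),g(x)\}$ and $\ell\geq 0$, and the right-hand side is $\PP$-integrable since $R(f)+R(g)<+\infty$. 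Lebesgue's dominated convergence theorem then yields $R(f_n)\to R(f)$, so $\inf R(\C\cap L^p(\XC,\PP_\XC;\YY))\leq R(f)$. Taking the infimum over $f\in\C$ completes the proof.

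The only conceptual step is the pointwise truncation–substitution construction, which exploits exactly the pointwise characterization of $\C$ appearing in Definition~\ref{def:goodconstraints}; the rest is dominated convergence. There is no significant obstacle beyond choosing a dominating function, which is handed to us by nonnegativity of $\ell$ together with the finiteness of $R(f)$ and $R(g)$.
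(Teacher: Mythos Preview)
Your proof is correct and follows essentially the same approach as the paper: the same truncation $f_n=\mathbf{1}_{A_n}f+\mathbf{1}_{\XC\smallsetminus A_n}g$ is used, and dominated convergence yields $R(f_n)\to R(f)$. The only cosmetic difference is that the paper bounds $|R(f_n)-R(f)|$ by $\int_{A_n^c\times\YC}|\ell(\cdot,\cdot,g)-\ell(\cdot,\cdot,f)|\,\ud\PP$ and applies DCT to this integrand, whereas you apply DCT directly to $\ell(\cdot,\cdot,f_n)$ using the bound $\ell(\cdot,\cdot,f_n)\leq\ell(\cdot,\cdot,f)+\ell(\cdot,\cdot,g)$; both are equally valid.
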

\begin{proof}
Suppose that
$\C=\menge{f\in\MC(\XC,\YY)}{(\forall x\in\XC)\;f(x)
\in\mathsf{C}(x)}$. 
Let $f\in\C$ be such that $R(f)<+\infty$. For every $n\in\NN$,
set $A_n=\menge{x\in\XC}{\abs{f(x)}\leq n}$, let 
$A^c_n $ be its complement, and define $f_n\colon\XC\to\YY$, 
$f_n=\boldsymbol{1}_{A_n} f+\boldsymbol{1}_{A^c_n} g$. 
For every $n\in\NN$ and $x\in\XC$, $f_n(x)\in\mathsf{C}(x)$
and $\abs{f_n(x)}\leq \max\{n, \abs{g(x)}\}$, 
hence $f_n\in\C\cap L^p(\XC, \PP_{\XC}; \YY)$. Moreover,
\begin{equation}
\label{eq:RLp}
(\forall n\in\NN)\quad
\abs{R(f_n)-R(f)}\leq\int_{A^c_n \times \YC} \abs{\ell(x,y,
g(x))-\ell(x,y, f(x))} P(\ud(x,y)).
\end{equation}
Set $h\colon(x,y)\mapsto\abs{\ell(x,y,g(x))-\ell(x,y,f(x))}$.
Since $R(f)<\pinf$ and $R(g)<\pinf$, we have 
$h\in L^1(\XC\times\YC,\PP)$.
Since $\boldsymbol{1}_{A_n^c \times \YC} h \to 0$ pointwise and  
$\boldsymbol{1}_{A_n^c \times \YC} h\leq h$, it follows from
the dominated convergence theorem that the right-hand side of
\eqref{eq:RLp} tends to zero, and hence $R(f_n)\to R(f)$. 
This implies that
$\inf R (\C\cap L^p(\XC, \PP_{\XC}; \YY))\leq R(f)$. 
\end{proof}

\begin{proposition}
\label{prop:constraints} 
Let $\XC$ and $\YC$ be nonempty sets, let $(\XC\times\YC,
\mathfrak{A}, \PP)$ be a probability space, let $\PP_\XC$ be the
marginal of $\PP$ on $\XC$, and
let $\YY$ be a separable reflexive real Banach space.
Let $\C \subset \MC(\XC,\YC)$ be  nonempty and convex and
let $p\in[1,\pinf]$. Suppose
that $\ell\in\Loss_p(\XC\times\YC,\YY,\PP)$, that
$\fmap\in L^p[\XC,\PP_\XC;\LL(\YY^*,\FF^*)]$, and that 
$A(\dom  G) \subset \C\cap \ran A \subset \overline{A
(\dom G)}$, where the closure is in $L^p(\XC,\PP_\XC;\YY)$. 
Let $R\colon\MC(\XC,\YY)\to\RPX$ be the risk associated 
with $\ell$ and $\PP$. Then the following hold:
\vspace{-2ex}
\begin{enumerate}\setlength{\itemsep}{1pt} 
\item
\label{ite:constr1}
$\inf F(\dom G)=\inf R(\C\cap\ran{A})$.
\item
\label{ite:constr2}
Suppose that $\C$ is $p\verb0-0$admissible and $\ran A$ is
$p$\verb0-0universal relative to $\C$. 
Then $\inf F(\dom G)=\inf R (\C)$.
\end{enumerate}
\end{proposition}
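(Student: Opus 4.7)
The plan is to leverage two continuity properties to reduce everything to a density argument. First, Proposition~\ref{p:2014-09-03}\ref{p:2014-09-03ii} ensures that the hypothesis $\fmap\in L^p[\XC,\PP_\XC;\LL(\YY^*,\FF^*)]$ forces $\ran A\subset L^p(\XC,\PP_\XC;\YY)$ and that $A\colon\FF\to L^p(\XC,\PP_\XC;\YY)$ is continuous. Second, since $\ell\in\Loss_p(\XC\times\YC\times\YY,\PP)$, Remark~\ref{r:contrisk}\ref{r:contriskii} yields that $R$ is real-valued and continuous on $L^p(\XC,\PP_\XC;\YY)$; in particular $R$ is finite on $\C\cap\ran A$. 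Composing, $F=R\circ A$ is real-valued and continuous on $\FF$.

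For part \ref{ite:constr1}, the inclusion $A(\dom G)\subset\C\cap\ran A$ immediately gives $\inf R(\C\cap\ran A)\leq\inf R(A(\dom G))=\inf F(\dom G)$. For the reverse inequality, I would fix $f\in\C\cap\ran A$ and use the right half of the sandwich hypothesis to produce a sequence $(u_n)_{n\in\NN}$ in $\dom G$ with $Au_n\to f$ in $L^p(\XC,\PP_\XC;\YY)$. Continuity of $R$ then gives $R(f)=\lim_n F(u_n)\geq\inf F(\dom G)$, and taking the infimum over $f\in\C\cap\ran A$ closes the chain.

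For part \ref{ite:constr2}, the plan is to combine part \ref{ite:constr1} with Proposition~\ref{prop:infriskonLp} and the $p$-universality assumption. To invoke Proposition~\ref{prop:infriskonLp} I need an element of $\C\cap L^p(\XC,\PP_\XC;\YY)$ of finite risk: since $G\in\Gamma_0^+(\FF)$ with $G(0)=0$ (Assumption~\ref{asssec4}\ref{asssec4III}), $0\in\dom G$ and therefore $A(0)=0\in A(\dom G)\subset\C\cap L^p(\XC,\PP_\XC;\YY)$ with $R(0)=F(0)<+\infty$. Proposition~\ref{prop:infriskonLp} then yields $\inf R(\C)=\inf R(\C\cap L^p(\XC,\PP_\XC;\YY))$. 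Since $\C\cap\ran A\subset\C\cap L^p(\XC,\PP_\XC;\YY)$, the task reduces to proving $\inf R(\C\cap L^p(\XC,\PP_\XC;\YY))\geq\inf R(\C\cap\ran A)$, i.e.\ approximating an arbitrary $f\in\C\cap L^p(\XC,\PP_\XC;\YY)$ by elements of $\C\cap\ran A$ so that the risk is preserved in the limit.

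This is exactly what $p$-universality of $\ran A$ relative to $\C$ supplies, applied with $\mu=\PP_\XC$: for $p<+\infty$ it produces $(f_n)_{n\in\NN}$ in $\C\cap\ran A$ with $f_n\to f$ in $L^p(\XC,\PP_\XC;\YY)$, and continuity of $R$ gives $R(f_n)\to R(f)$. The only delicate case, and the one I expect to be the main obstacle, is $p=+\infty$: there $p$-universality only supplies $(f_n)_{n\in\NN}$ with $\sup_n\|f_n\|_\infty<+\infty$ and $f_n\to f$ $\PP_\XC$-a.e., which is not the topology for which $R$ is a priori continuous. I would handle this by invoking pointwise continuity of $\ell(x,y,\cdot)$ to get $\ell(x,y,f_n(x))\to\ell(x,y,f(x))$ $\PP$-a.e., then dominating by the integrable envelope $g_\rho$ from the definition of $\Loss_\infty(\XC\times\YC\times\YY,\PP)$ at radius $\rho>\sup_n\|f_n\|_\infty$, and finally applying Lebesgue's dominated convergence theorem to conclude $R(f_n)\to R(f)$. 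In either regime $R(f)\geq\inf R(\C\cap\ran A)$, so taking the infimum over $f$ yields the desired equality, which together with part \ref{ite:constr1} completes the proof.
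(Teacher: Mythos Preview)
Your argument is correct and follows the same route as the paper: continuity of $R$ on $L^p(\XC,\PP_\XC;\YY)$ plus the sandwich hypothesis for part~\ref{ite:constr1}, then $p$-universality together with Proposition~\ref{prop:infriskonLp} (and dominated convergence when $p=\infty$) for part~\ref{ite:constr2}. The only cosmetic point is that you need not invoke Assumption~\ref{asssec4}\ref{asssec4III} to exhibit a finite-risk element of $\C\cap L^p(\XC,\PP_\XC;\YY)$: any $Au$ with $u\in\dom G$ already lies in $A(\dom G)\subset\C\cap L^p(\XC,\PP_\XC;\YY)$ and has $R(Au)<+\infty$ by Remark~\ref{r:contrisk}\ref{r:contriskii}.
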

\begin{proof}
\ref{ite:constr1}:
By Remark~\ref{r:contrisk}\ref{r:contriskii}, $R$ is
continuous on $L^p(\XC,\PP_\XC;\YY)$ and hence 
$\inf R(A(\dom G))=\inf R( \overline{A (\dom  G)})$.  Therefore,  
since $A(\dom G)\subset\C\cap\ran A\subset\overline{A(\dom G)}$,
the assertion follows.

\ref{ite:constr2}:
Suppose first that $p<+\infty$.  Since $R$ is continuous on
$L^p(\XC, \PP_\XC; \YY)$ and $\C\cap \ran A$ is dense in $\C
\cap L^p(\XC,\PP_\XC;\YY)$, $\inf R(\C\cap\ran{A})=\inf R(\C
\cap L^p(\XC,\PP_\XC;\YY))$. Thus, since $\C$ is $p$-admissible,
Proposition \ref{prop:infriskonLp} gives $\inf R(\C\cap L^p(\XC,
\PP_\XC; \YY))=\inf R(\C)$ and hence 
$\inf R (\C\cap \ran{A})=\inf R(\C)$. 
The statement follows from \ref{ite:constr1}. Now suppose 
that $p=+\infty$. Let $f\in\C\cap L^\infty(\XC,\PP_\XC;\YY)$.
By Definition~\ref{def:universal}\ref{def:infuniversal}, 
there exists $(f_n)_{n\in\NN}\in(\C\cap \ran{A})^\NN$ and 
$\rho\in\RPP$
such that $\sup_{n\in\NN} \norm{f_n}_{\infty}\leq\rho$ and 
$f_n\to f$ $\PP_\XC\verb0-0\text{a.s.}$ It follows from 
\eqref{2cwrey6T24b} that 
$(\exi g_\rho\in L^1(\XC\times\YC,\PP;\RR))
(\forall (x,y)\in\XC\times\YC)$
$\abs{\ell(x,y,f_n(x))-\ell(x,y,f(x))}\leq 2g_\rho(x,y)$.
By the dominated convergence theorem, $R(f_n)\to R(f)$.
Thus, $\inf R(\C\cap\ran{A})=
\inf R(\C\cap L^\infty(\XC,\PP_\XC;\YY))$ and we conclude as above.
\end{proof}

\begin{proposition}
\label{p:consistency}
Suppose that Assumption~\ref{asssec4} holds and that 
Notation~\ref{dZhd7rT9-02} is in use.
Write $\varepsilon=\varepsilon_1\varepsilon_2$, where
$\varepsilon_1$ and 
$\varepsilon_2$ are functions from $\RPP$ to $[0,1]$,
let $\lambda \in\RPP$, and define 
$u_{\lambda}=\argmin_{\FF}(F+\lambda G)$. 
Let $\tau\in\RPP$, let $n\in\NN\smallsetminus\{0\}$, 
and let $\rho\in[\norm{u_{\lambda}},+\infty[$. Then
the following hold:
\vspace{-2ex}
\begin{enumerate}\setlength{\itemsep}{1pt} 
\item
\label{eq:sampleerr1}
$\!\PO^*\bigg[
\norm{u_{n,\lambda}(\ZZZ_n)-u_\lambda}
\geq\varepsilon_1(\lambda)+
(\widehat{\psi}_{\rho})^\natural\bigg(
\alpha_{n,\lambda}(\tau,\rho)
+\dfrac{\varepsilon_2(\lambda)}{\lambda} \bigg) \bigg] 
\leq e^{-\tau}$.
\item
\label{eq:sampleerr1.2}
$\!\PO^*\bigg(\!\big[\norm{u_{n,\lambda}(\ZZZ_n)}\leq\rho\big] 
\!\cap\!\bigg[F(u_{n,\lambda}(\ZZZ_n))\!-
\!F(u_\lambda)\geq
\varsigma
\lip{\ell}{\varsigma\rho}
\bigg(\!\varepsilon_1(\lambda)\!+\!(\widehat{\psi}_{\rho})^\natural
\bigg(\alpha_{n,\lambda}(\tau,\rho)\!+\!
\Frac{\varepsilon_2(\lambda)}{\lambda}\! 
\bigg)\!\bigg)\!\bigg]\!\bigg)$ $\leq\!e^{-\tau}$.
\item
\label{p:consistencyiii}
Suppose that $\ell \in \Loss_1(\XC\times\YC\times\YY,\PP)$ and let 
$c\in\RP$ be as in Definition~\ref{def:lossandrisk}\ref{def:Gpi}. 
Then 
\begin{equation}
\!\PO^*\bigg[F(u_{n,\lambda}(\ZZZ_n))-F(u_\lambda)
\geq\varsigma c
\bigg(\!\varepsilon_1(\lambda)+(\widehat{\psi}_{\rho})^\natural
\bigg(\alpha_{n,\lambda}(\tau,\rho)\!+\!
\Frac{\varepsilon_2(\lambda)}{\lambda}
\bigg)\!\bigg)\!\bigg]\leq e^{-\tau}.
\end{equation}
\end{enumerate}
\end{proposition}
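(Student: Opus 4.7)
\emph{Overall strategy.} I would treat the empirical risk $F_n(\cdot,\ZZZ_n)$ as a random perturbation of $F$ arising from the substitution of $P$ by the empirical measure $P_n=(1/n)\sum_{i=1}^n\delta_{(X_i,Y_i)}$, and combine the deterministic sensitivity bound of Theorem~\ref{thm:sensitivity2} with a Banach-valued Bernstein concentration inequality in $\FF^*$. Since that theorem requires an approximate-subdifferential condition and not merely an approximate-value condition, the first move is Ekeland's variational principle applied to the lower semicontinuous convex function $F_n(\cdot,\ZZZ_n)+\lambda G$ with parameter $\varepsilon_1(\lambda)$: this produces some $\tilde u_{n,\lambda}\in\FF$ with $\norm{u_{n,\lambda}(\ZZZ_n)-\tilde u_{n,\lambda}}\leq\varepsilon_1(\lambda)$ and $\inf\norm{\partial(F_n(\cdot,\ZZZ_n)+\lambda G)(\tilde u_{n,\lambda})}\leq\varepsilon_2(\lambda)$. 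In parallel, Theorem~\ref{thm:genrep}\ref{thm:genrepiii} (available since $\ell\in\Loss_{\YY,\text{loc}}$ and $\fmap\in L^\infty$) applied to $u_\lambda$ with some $\rho>\norm{u_\lambda}$ furnishes $h_\lambda\in L^\infty(\XC\times\YC,P;\YY^*)$ satisfying \eqref{eq:h}--\eqref{eq:genrep} with $\epsilon=0$ and $\norm{h_\lambda}_\infty\leq\lip{\ell}{\varsigma\rho}$.

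\emph{Deterministic reduction and concentration.} Feeding $u_\lambda$, $\tilde u_{n,\lambda}$, $\widetilde P=P_n$, $\epsilon=0$, and $\tilde\epsilon=\varepsilon_2(\lambda)$ into Theorem~\ref{thm:sensitivity2}\ref{thm:sensitivity2ii}, bounding $\widehat{\psi(u_\lambda,\cdot)}$ below by $\widehat{\psi}_\rho$ (valid since $u_\lambda\in B(\rho)\cap\dom G$), inverting via the upper-quasi-inverse, and applying the triangle inequality against the Ekeland estimate delivers the deterministic bound
\[
\norm{u_{n,\lambda}(\ZZZ_n)-u_\lambda}\leq\varepsilon_1(\lambda)+(\widehat{\psi}_\rho)^\natural\!\left(\frac{\norm{\EE_{P_n}(\fmap h_\lambda)-\EE_P(\fmap h_\lambda)}+\varepsilon_2(\lambda)}{\lambda}\right).
\]
The remaining task --- and the main obstacle --- is to show that the centered i.i.d.\ sum $\EE_{P_n}(\fmap h_\lambda)-\EE_P(\fmap h_\lambda)=(1/n)\sum_{i=1}^n\!\bigl(\fmap(X_i)h_\lambda(X_i,Y_i)-\EE_P(\fmap h_\lambda)\bigr)$ concentrates in $\FF^*$ at the rate encoded by $\alpha_{n,\lambda}(\tau,\rho)$. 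Each summand is $P$-essentially bounded by $\varsigma\lip{\ell}{\varsigma\rho}$, and $\FF^*$ is of Rademacher type $q^*$ with constant $T_{q^*}$; the Bernstein-type Banach-valued concentration inequality collected in the appendix should then yield
\[
\PO^*\Big[\norm{\EE_{P_n}(\fmap h_\lambda)-\EE_P(\fmap h_\lambda)}\geq\lambda\,\alpha_{n,\lambda}(\tau,\rho)\Big]\leq e^{-\tau},
\]
and \ref{eq:sampleerr1} then follows from the monotonicity of $(\widehat{\psi}_\rho)^\natural$. The delicate point is matching the exact constants $4T_{q^*}$, $2\sqrt{2\tau/n}$, $4\tau/(3n)$ appearing in the definition \eqref{eq:parameters} of $\alpha_{n,\lambda}$ --- this is precisely what the appendix has to supply.

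\emph{From displacement to excess risk.} For \ref{eq:sampleerr1.2}, on the event $[\norm{u_{n,\lambda}(\ZZZ_n)}\leq\rho]$ both $u_{n,\lambda}(\ZZZ_n)$ and $u_\lambda$ lie in $B(\rho)$, whence $\abs{(Au_{n,\lambda}(\ZZZ_n))(x)}$ and $\abs{(Au_\lambda)(x)}$ are bounded by $\varsigma\rho$ for every $x$; the local Lipschitz continuity of $\ell(x,y,\cdot)$ on $B(\varsigma\rho)$, integrated against $P$, then gives $F(u_{n,\lambda}(\ZZZ_n))-F(u_\lambda)\leq\varsigma\lip{\ell}{\varsigma\rho}\,\norm{u_{n,\lambda}(\ZZZ_n)-u_\lambda}$, so the event embeds into the one in \ref{eq:sampleerr1} and inherits its probability bound. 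For \ref{p:consistencyiii}, the assumption $\ell\in\Loss_1(\XC\times\YC\times\YY,P)$ upgrades $\ell(x,y,\cdot)$ to globally $c$-Lipschitz (Remark~\ref{r:contrisk}), so the preceding Lipschitz inequality now holds unconditionally with $c$ in place of $\lip{\ell}{\varsigma\rho}$, and the conclusion reduces once more to \ref{eq:sampleerr1}.
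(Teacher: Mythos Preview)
Your proposal is correct and follows essentially the same route as the paper's own proof: Ekeland's principle to upgrade the $\varepsilon$-minimizer to an approximate stationary point, Theorem~\ref{thm:genrep}\ref{thm:genrepiii} applied at the exact minimizer $u_\lambda$ to produce the bounded selector $h_\lambda$, the sensitivity bound of Theorem~\ref{thm:sensitivity2}\ref{thm:sensitivity2ii} with $\widetilde P$ the empirical measure, and then the Banach-space Hoeffding inequality (Theorem~\ref{thm:Hoeffding}) for the $\FF^*$-valued sum, with parts \ref{eq:sampleerr1.2} and \ref{p:consistencyiii} reduced to \ref{eq:sampleerr1} exactly as you describe. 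You are even slightly more explicit than the paper in justifying the passage from $\psi(u_\lambda,\cdot)$ to $\psi_\rho$ before taking the upper-quasi-inverse.
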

\begin{proof}
\ref{eq:sampleerr1}:
Let $z=(x_i,y_i)_{1\leq i\leq n}\in(\XC\times\YC)^n$. Since
\begin{equation}
u_{n,\lambda}(z)\in\Argmin_\FF^{\varepsilon_1(\lambda)
\varepsilon_2(\lambda)}(F_n(\cdot,z )+\lambda  G),
\end{equation}
it follows from Proposition~\ref{p:5}\ref{p:5ii} and
Ekeland's variational principle 
\cite[Corollary~4.2.12]{Lucche2006} that there exists 
$v_{n,\lambda}\in\FF$ such that $\norm{u_{n,
\lambda}(z)-v_{n,\lambda}}\leq\varepsilon_1(\lambda)$ and
$\inf \norm{ \partial (F_n(\cdot, z)+\lambda  G)(v_{n,
\lambda})}\leq\varepsilon_2(\lambda)$.
We note that $\ell \in \Loss_\infty(\XC\times\YC\times\YY)$ by
Remark~\ref{r:contrisk}\ref{r:contriskiv}.
Hence, setting $\QQI=(1/n) \sum_{i=1}^n
\delta_{(x_i, y_i )}$, we derive from 
Theorems~\ref{thm:genrep}\ref{thm:genrepiii} and
\ref{thm:sensitivity2}\ref{thm:sensitivity2ii}
that there exists a measurable and $\PP$-a.s.~bounded function
$h_\lambda \colon\XC\times\YC \to \YY^*$ such that 
$\norm{h_\lambda}_{\infty}\leq \lip{\ell}{\varsigma \rho}$
and
\begin{equation}
\norm{v_{n,\lambda}-u_\lambda}\leq
(\widehat{\psi}_{\rho})^\natural\bigg( \frac{1}{\lambda}
\big\lVert \EE_\PP [\fmap h_\lambda]-\frac{1}{n} \sum_{i=1}^n
\fmap (x_i) h_\lambda(x_i,
y_i)\big\rVert+\frac{\varepsilon_2(\lambda)}{\lambda}
\bigg).
\end{equation}  
Thus, for every $z\in (\XC \times \YC)^n$
\begin{equation}\label{eq:stab}
\norm{u_{n,\lambda}(z)-u_\lambda}\leq
\varepsilon_1(\lambda)+(\widehat{\psi}_{\rho})^\natural\bigg(
\frac{1}{\lambda} \big\lVert \EE_\PP [\fmap h_\lambda]
-\frac{1}{n} \sum_{i=1}^n \fmap (x_i) h_\lambda(x_i,
y_i)\big\rVert+\frac{\varepsilon_2(\lambda)}{\lambda}
\bigg).
\end{equation}
Now consider the family of i.i.d.~random vectors
$(\fmap(X_i)h_\lambda(X_i,Y_i) )_{1\leq i\leq n}$, 
from $\Omega$ to $\FF^*$. Since
$\max_{1\leq i\leq n}\norm{\fmap(X_i)
h_\lambda(X_i,Y_i)}\leq \varsigma
\lip{\ell}{\varsigma\rho}$ $\PO$-a.s.,  Theorem
\ref{thm:Hoeffding} gives
\begin{equation}
\label{eq:12345}
\PO \Big[ \Big\lVert \EE_\PO [\fmap(X)h_\lambda(X,Y)]-\frac{1}{n}
\sum_{i=1}^n \fmap (X_i) h_\lambda(X_i, Y_i)\Big\rVert \geq
\lambda\alpha_{n,\lambda}(\tau,\rho) \Big]{\leq e^{-\tau}}.
\end{equation}
Hence, since $(\widehat{\psi}_{\rho})^\natural$ is increasing by
Proposition~\ref{prop:psinatural}\ref{psinat_ina0}, a fortiori 
we have
\begin{align}
\label{e:afY34nd93nl10a}
\nonumber\PO  \bigg[  \varepsilon_1(\lambda) 
&+(\widehat{\psi}_{\rho})^\natural \bigg( \frac 1 \lambda 
\bigg\lVert
\EE_\PP[\fmap h_\lambda]-\frac{1}{n}\sum_{i=1}^n \fmap (X_i)
h_\lambda(X_i,Y_i)\bigg\rVert+
\frac{\varepsilon_2(\lambda)}{\lambda}\bigg) \\
&\geq\varepsilon_1(\lambda)+(\widehat{\psi}_{\rho})^\natural \bigg(
\alpha_{n,\lambda}(\tau,\rho)+\frac{\varepsilon_2(\lambda)}
{\lambda}\bigg)\bigg]\leq e^{-\tau}.
\end{align}
Thus \ref{eq:sampleerr1} follows from \eqref{eq:stab} and 
\eqref{e:afY34nd93nl10a}.

\ref{eq:sampleerr1.2}:
Let $\omega\in\big[\norm{u_{n,\lambda}(\ZZZ_n)}\leq\rho \big]$.
Since $\norm{u_\lambda}\leq\rho$ and 
$\norm{u_{n,\lambda}(\ZZZ_n(\omega))}\leq\rho$, 
we have $\norm{A u_\lambda}_\infty\leq\varsigma\rho$ and 
$\norm{A u_{n,\lambda}(\ZZZ_n(\omega))}_\infty\leq\varsigma\rho$.
Hence, we derive from Assumption~\ref{asssec4}\ref{asssec4ii} 
that
$F(u_{n,\lambda}(\ZZZ_n(\omega)))-
F(u_\lambda)\leq \lip{\ell}{\varsigma \rho}
\norm{Au_{n,\lambda}(\ZZZ_n(\omega))-A u_\lambda}_\infty\leq
\varsigma \lip{\ell}{\varsigma
\rho} \norm{u_{n,\lambda}(\ZZZ_n(\omega))-u_\lambda}
$.
Thus, \ref{eq:sampleerr1.2} follows from \ref{eq:sampleerr1}.

\ref{p:consistencyiii}:
It follows from Remark~\ref{r:contrisk}\ref{rmk:lossloclip_vs_lpi}
that $\ell$ is globally Lipschitz continuous in the third
variable uniformly with respect to the first two and that 
$\sup_{\rho^\prime\in\RPP}\lip{\ell}{\rho^\prime}\leq c$. 
Hence, we derive from \eqref{eq:riskP} that 
$R$ is Lipschitz continuous on $L^1(\XC,\PP_\XC;\YY)$
with Lipschitz constant $c$. As a result, 
\begin{equation}
(\forall\omega\in\Omega)\quad
\label{eZhd7rT9-06a}
F(u_{n,\lambda}(\ZZZ_n(\omega)))-F(u_\lambda)\leq c
\norm{Au_{n,\lambda}(\ZZZ_n(\omega))-A u_\lambda}_\infty\leq
\varsigma c\norm{u_{n,\lambda}(\ZZZ_n(\omega))-u_\lambda}.
\end{equation}
Thus, the statement follows from \ref{eq:sampleerr1}.
\end{proof}

The following technical result will be required subsequently.
\begin{lemma}
\label{l:invert_alpha+}
Let $\alpha\colon\RP \to \RP$ and let $\gamma \in \RPP$ be such that,
for every $\tau \in \left]1,+\infty\right[$, 
$\alpha(\tau) \leq \gamma \tau$.
Let $\phi \in \mathcal{A}_0$, 
let $(\eta,\epsilon)\in \RPP\times\RP$, and suppose that 
$\phi^\natural(2 \gamma)<\eta$ and $\phi^\natural(2\epsilon)<\eta$.
Set $\tau_0=\phi(\eta^-)/(2 \gamma)$. 
Then $\phi^\natural(\alpha(\tau_0)+\epsilon)<\eta$.
\end{lemma}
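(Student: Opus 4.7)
The plan is to convert the two hypotheses on $\phi^\natural$ into strict lower bounds on the left-limit $\phi(\eta^-)$, so that the growth condition on $\alpha$ applies at the specific value $\tau_0$, and then invoke the definition of $\phi^\natural$ one more time to conclude. All the work is really in exploiting monotonicity of $\phi$ and the implication $\phi(t)\leq s\Rightarrow t\leq\phi^\natural(s)$ recorded in the paper.

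First I would turn $\phi^\natural(2\gamma)<\eta$ into $\phi(\eta^-)>2\gamma$. Indeed, contrapositively $t>\phi^\natural(s)\Rightarrow \phi(t)>s$ is not immediate, but since $\{t\geq 0\mid \phi(t)\leq s\}$ is a lower set (by monotonicity of $\phi$), we do have $\phi(t)>s$ for every $t>\phi^\natural(s)$. Picking any $t\in(\phi^\natural(2\gamma),\eta)$ yields $\phi(t)>2\gamma$, whence $\phi(\eta^-)=\sup_{t<\eta}\phi(t)>2\gamma$. Exactly the same reasoning applied with $2\epsilon$ in place of $2\gamma$ gives $\phi(\eta^-)>2\epsilon$, i.e.\ $\epsilon<\phi(\eta^-)/2$.

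The bound $\phi(\eta^-)>2\gamma$ is precisely what makes $\tau_0=\phi(\eta^-)/(2\gamma)>1$, placing $\tau_0$ in the range where the growth hypothesis on $\alpha$ is valid: $\alpha(\tau_0)\leq\gamma\tau_0=\phi(\eta^-)/2$. Adding this to $\epsilon<\phi(\eta^-)/2$ yields the key strict inequality
\begin{equation*}
\alpha(\tau_0)+\epsilon<\phi(\eta^-).
\end{equation*}
Since $\phi(\eta^-)=\sup_{t<\eta}\phi(t)$, there exists $t^*\in[0,\eta)$ with $\phi(t^*)>\alpha(\tau_0)+\epsilon$. Using once more the lower-set property of $\{t\geq 0\mid\phi(t)\leq\alpha(\tau_0)+\epsilon\}$ forces $\phi^\natural(\alpha(\tau_0)+\epsilon)\leq t^*<\eta$, which is the desired conclusion.

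The only real subtlety is ensuring the \emph{strict} inequality $\phi(\eta^-)>2\gamma$ (so that $\tau_0>1$, not merely $\tau_0\geq 1$); this is delivered by having $\phi^\natural(2\gamma)$ strictly below $\eta$, leaving room to pick a genuine intermediate $t$. Once that is observed, the argument is straightforward bookkeeping with the definitions in \eqref{eq:psidag} and the monotonicity of $\phi$.
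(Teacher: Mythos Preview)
Your proof is correct and follows essentially the same route as the paper's. The only difference is cosmetic: the paper invokes Proposition~\ref{prop:psinatural}\ref{psi_psinat_eq} (the equivalence $\phi^\natural(s)<t\Leftrightarrow s<\phi(t^-)$) three times, whereas you re-derive both directions of that equivalence inline via the lower-set property of $\{t\geq 0\mid\phi(t)\leq s\}$.
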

\begin{proof}
Recalling Proposition~\ref{prop:psinatural}\ref{psi_psinat_eq}, we
derive from the inequalities $\phi^\natural(2\gamma)<\eta$ and
$\phi^\natural(2\epsilon)<\eta$ that $\tau_0>1$ and
$\phi(\eta^-)>2\epsilon$, respectively.  
Therefore, since $\gamma \tau_0 =
\phi(\eta^-)/2$, we have $\alpha(\tau_0)+\epsilon \leq \tau_0
\gamma+\epsilon=\phi(\eta^-)/2+\epsilon<\phi(\eta^-)$.
Again, by Proposition~\ref{prop:psinatural}\ref{psi_psinat_eq}, 
we obtain that $\phi^\natural(\alpha(\tau_0)+\epsilon)<\eta$.
\end{proof}

\begin{proposition}
\label{p:wconsistency} 
Suppose that Assumption~\ref{asssec4} holds, that 
Notation~\ref{dZhd7rT9-02} is in use, and
that $\ell(\cdot,\cdot,0)$ is bounded. Write
$\varepsilon=\varepsilon_1\varepsilon_2$, where
$\varepsilon_1$ and $\varepsilon_2$ are functions from $\RPP$ 
to $[0,1]$. Let $(\forall n\in\NN)$ $\rho_n \in \big[\psi_0^\natural\big(
(\norm{\ell(\cdot,\cdot,0)}_\infty +1)/\lambda_n\big), +\infty\big[\,$.
Then the following hold:
\vspace{-2ex}
\begin{enumerate}\setlength{\itemsep}{1pt} 
\item 
\label{p:wconsistencyi}
Let $\lambda \in\RPP$, and set 
$u_{\lambda}=\argmin_{\FF}(F+\lambda G)$ and let
$\rho \in \big[\psi_0^\natural \big((\norm{\ell(\cdot,\cdot,0)}_\infty+1)
/\lambda \big),+\infty\big[\,$. 
Let $\tau\in\RPP$ and let $n\in\NN\smallsetminus\!\{0\}$. Then
\begin{align}
\label{eq:wrate}
\nonumber\PO^*\Big[ F(u_{n,\lambda}(\ZZZ_n))-\inf F(\dom G) \geq
\varsigma&\lip{\ell}{\varsigma \rho} \big(
\varepsilon_1(\lambda)+(\widehat{\psi}_\rho)^\natural\big(
\alpha_{n,\lambda}(\tau,\rho)+\varepsilon_2(\lambda)/\lambda
\big)\big)\\
&+F(u_\lambda)-\inf F(\dom G)\Big]\leq e^{-\tau}.
\end{align}
\item
\label{p:wconsistencyii}
Suppose that \eqref{eq:conscond} and 
\eqref{eq:conscond2} hold. Then $F(u_{n,\lambda_n}(\ZZZ_n))
\overset{\PO^*}{\to}\inf F (\dom G)$.
\item
\label{p:wconsistencyiii}
Suppose that \eqref{eq:conscond} and 
\eqref{eq:conscond2s} hold.
Then $F(u_{n,\lambda_n}(\ZZZ_n))\to\inf F(\dom G)$
$\PO^*\verb0-0\text{a.s.}$
\end{enumerate}
\end{proposition}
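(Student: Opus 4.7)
The plan is to decompose the excess risk as $F(u_{n,\lambda_n}(\ZZZ_n)) - \inf F(\dom G) = [F(u_{n,\lambda_n}(\ZZZ_n)) - F(u_{\lambda_n})] + [F(u_{\lambda_n}) - \inf F(\dom G)]$, and to treat the second (approximation) summand as a deterministic quantity that tends to $0$ by Proposition~\ref{p:4}\ref{p:4i}, while controlling the first (sample) summand via Proposition~\ref{p:consistency}\ref{eq:sampleerr1.2}. The one delicate preliminary is to secure the a priori norm bound $\max\{\norm{u_\lambda},\norm{u_{n,\lambda}(z)}\}\leq \rho$ for every $z\in(\XC\times\YC)^n$, for every $\rho$ in the range prescribed by the statement. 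Comparison of $u_\lambda=\argmin_\FF(F+\lambda G)$ with $0\in\dom G$ gives $\lambda G(u_\lambda)\leq F(0)\leq\norm{\ell(\cdot,\cdot,0)}_\infty$; approximate optimality of $u_{n,\lambda}(z)$ in \eqref{eq:learnalgo}, together with $\varepsilon(\lambda)\leq 1$ and $R_n\geq 0$, yields $\lambda G(u_{n,\lambda}(z))\leq R_n(0,z)+\varepsilon(\lambda)\leq\norm{\ell(\cdot,\cdot,0)}_\infty+1$. Since $G\geq G(0)=0$ forces $G^{\prime}(0;\cdot)\geq 0$, the total convexity of $G$ at $0$ furnishes $\psi_0(\norm{v})\leq G(v)$ for every $v\in\dom G$, and applying $\psi_0^\natural$ to the two displayed inequalities delivers the desired bound with the prescribed $\rho$.

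Granting this bound, the event $[\norm{u_{n,\lambda}(\ZZZ_n)}\leq\rho]$ is deterministically of full probability, so Proposition~\ref{p:consistency}\ref{eq:sampleerr1.2} immediately yields the probabilistic estimate on $F(u_{n,\lambda}(\ZZZ_n))-F(u_\lambda)$; adding the deterministic approximation error $F(u_\lambda)-\inf F(\dom G)$ then produces \eqref{eq:wrate}, settling part~\ref{p:wconsistencyi}. For parts~\ref{p:wconsistencyii} and \ref{p:wconsistencyiii} I apply \ref{p:wconsistencyi} with $\rho=\rho_n$. Using \eqref{e:20141010c} for $\tau\geq 1$ together with $\varepsilon_2(\lambda_n)/\lambda_n=O(\lip{\ell}{\varsigma\rho_n}/(\lambda_n n^{1/q}))$ from \eqref{eq:conscond}, the argument of $(\widehat{\psi}_{\rho_n})^\natural$ is dominated by $c\tau\lip{\ell}{\varsigma\rho_n}/(\lambda_n n^{1/q})$ for a constant $c$ depending only on $T_{q^*}$, $\varsigma$, and the $O$-constant; monotonicity of $(\widehat{\psi}_{\rho_n})^\natural$ (Proposition~\ref{prop:psinatural}) combined with \eqref{eq:conscond2} and $\lip{\ell}{\varsigma\rho_n}\varepsilon_1(\lambda_n)\to 0$ makes the entire right-hand threshold in \eqref{eq:wrate} vanish. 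For every $\delta\in\RPP$ this gives $\limsup_n\PO^*[F(u_{n,\lambda_n}(\ZZZ_n))-\inf F(\dom G)\geq\delta]\leq e^{-\tau}$, and letting $\tau\to+\infty$ proves \ref{p:wconsistencyii}. For \ref{p:wconsistencyiii}, the same calculation is carried out with $\tau=\tau_n:=2\log n$, which is permitted by \eqref{eq:conscond2s}; since $\sum_n e^{-\tau_n}<+\infty$, Borel--Cantelli for the outer measure (combined with the countable intersection over $\delta=1/k$) yields $\PO^*$-almost-sure convergence.

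The main obstacle is the deterministic norm bound: one must argue that the choice $\rho_n\geq\psi_0^\natural((\norm{\ell(\cdot,\cdot,0)}_\infty+1)/\lambda_n)$, governed by the modulus of total convexity of $G$ at the origin rather than by $G$ itself, is strong enough to confine every approximate minimizer of the regularized empirical risk to a common ball independent of $z$. Once this is in place, converting the probability bound of part~\ref{p:wconsistencyi} into the asymptotic statements \ref{p:wconsistencyii}--\ref{p:wconsistencyiii} is routine bookkeeping guided by \eqref{e:20141010c}, the monotonicity of $(\widehat{\psi}_\rho)^\natural$, and Borel--Cantelli.
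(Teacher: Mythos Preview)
Your proposal is correct and follows essentially the same route as the paper. For part~\ref{p:wconsistencyi} you reproduce inline the content of Proposition~\ref{prop:flambda2} (the paper simply cites it) to obtain the deterministic bound $\max\{\norm{u_\lambda},\norm{u_{n,\lambda}(z)}\}\leq\rho$, and then invoke Proposition~\ref{p:consistency}\ref{eq:sampleerr1.2} exactly as the paper does. Part~\ref{p:wconsistencyii} also matches.

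The only genuine difference is in part~\ref{p:wconsistencyiii}. The paper fixes $\eta\in\RPP$ and, via Lemma~\ref{l:invert_alpha+}, chooses the data-dependent confidence level
\[
\tau_0=\widehat{\psi}_{\rho_n}\big((\eta/\lip{\ell}{\varsigma\rho_n})^-\big)\,\frac{\lambda_n n^{1/q}}{2\varsigma(4T_{q^*}+5)\lip{\ell}{\varsigma\rho_n}},
\]
so that the threshold in \eqref{eq:wrate} drops below $\eta$ while \eqref{eq:conscond2s} ensures $\tau_0>\xi\log n$ and hence summability $\sum n^{-\xi}<\infty$. You instead fix $\tau_n=2\log n$ directly, bound the argument of $(\widehat{\psi}_{\rho_n})^\natural$ by a constant multiple of $\lip{\ell}{\varsigma\rho_n}\log n/(\lambda_n n^{1/q})$ using \eqref{e:20141010c} and the $O$-condition on $\varepsilon_2$, and then let \eqref{eq:conscond2s} drive the threshold to zero. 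Your variant is simpler and entirely sufficient for the stated conclusion; the paper's inversion via Lemma~\ref{l:invert_alpha+} yields a more explicit tail estimate in terms of $\eta$ and $\widehat{\psi}_{\rho_n}$, but that extra information is not needed here.
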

\begin{proof}
\ref{p:wconsistencyi}: 
Since for every $z_n=(x_i,y_i)_{1\leq i\leq n}\in(\XC\times\YC)^n$,
$F_n(0, z_n)\leq \norm{\ell(\cdot, \cdot, 0)}_\infty$
and $F(0)\leq \norm{\ell(\cdot, \cdot, 0)}_\infty$, 
it follows from 
Proposition~\ref{prop:flambda2} that
$\norm{u_{n,\lambda}(\ZZZ_n)}\leq \rho$ 
and $ \norm{u_{\lambda}}\leq \rho$. 
Thus, Proposition~\ref{p:consistency}\ref{eq:sampleerr1.2} yields
$\PO^*\big[F(u_{n,\lambda}(\ZZZ_n))-F(u_\lambda)\geq\varsigma
\lip{\ell}{\varsigma\rho} \big(\varepsilon_1(\lambda)+
(\widehat{\psi}_{\rho})^\natural\big(
\alpha_{n,\lambda}(\tau,\rho)
+\varepsilon_2(\lambda)/\lambda\big)\big)\big]\leq e^{-\tau}$,
and \eqref{eq:wrate} follows.

\ref{p:wconsistencyii}:
Because of \eqref{e:afY34nd93nl10c}, conditions
\eqref{eq:conscond}-\eqref{eq:conscond2} imply that
\begin{equation}
(\forall\tau\in{[1,+\infty[})
\quad\varsigma\lip{\ell}{\varsigma \rho_n}
\big(\varepsilon_1(\lambda_n)+(\widehat{\psi}_{\rho_n})^\natural
\big(\alpha_{n,\lambda_n}(\tau,\rho_n)+\varepsilon_2
(\lambda_{{n}})/\lambda_{{n}} \big) \big) \to 0.
\end{equation}
Therefore, it follows from \eqref{eq:wrate} and
Proposition~\ref{p:4}\ref{p:4i} that for every
$(\eta,\tau)\in\RPP\times[1+\infty[$,
there  exists $\bar{n}\in\NN$ such that, for every integer
$n\geq\bar{n}$, $\PO^{*}\big[
F(u_{n,\lambda_n}(\ZZZ_n))-\inf F(\dom G)\geq
\eta\big]\leq e^{-\tau}$.
Hence, for every $(\eta,\tau)\in\RPP{\times[1,+\infty[}$,
$\varlimsup_{n\to+\infty}\PO^{*}
\big[ F(u_{n,\lambda_n}(\ZZZ_n))-
\inf F (\dom  G)\geq \eta \big]\leq e^{-\tau}$.
The convergence in outer probability follows.

\ref{p:wconsistencyiii}: 
Let $\eta\in\RPP$ and let $\xi\in\left] 1, +\infty \right[$.
It follows from \eqref{eq:conscond} and \eqref{eq:conscond2s} that
there exists an integer $\bar{n}\geq3$ such that,
for every integer $n\geq\bar{n}$, we have
\begin{equation}
\label{eq:eta_varepsilon2log}
\lip{\ell}{\varsigma\rho_n}
(\widehat{\psi}_{\rho_n})^\natural
\Big(\frac{2 \varsigma \xi (4T_{q^*}+5)\lip{\ell}{\varsigma\rho_n}
\log n}{\lambda_n n^{1/q}} \Big)<\eta\quad\text{and}\quad
\lip{\ell}{\varsigma \rho_n} (\widehat{\psi}_{\rho_n})^\natural
\Big(2 \frac{\varepsilon_2(\lambda_n)}{\lambda_n} \Big)< \eta\,.
\end{equation}
Let $n\in\NN$ be such that $n\geq\bar{n}$ and set 
$\gamma=\varsigma (4 T_{q^*}+5)\lip{\ell}{\varsigma
\rho_n}/(\lambda_n n^{1/q})$.
We derive from \eqref{e:afY34nd93nl10c} that
$(\forall\, \tau \in [1,+\infty)\ \alpha_{n,\lambda_n}(\tau,
\rho_n) \leq \tau \gamma$.
Then, since 
$1\leq\xi\log n$, it follows from Lemma~\ref{l:invert_alpha+} 
that
\begin{multline}
\tau_{0}=\widehat{\psi}_{\rho_n}
\bigg(\bigg(\frac{\eta}{\lip{\ell}{\varsigma \rho_n}}
\bigg)^{\!\!-} \bigg)
\frac{\lambda_n n^{1/q}}{2 \varsigma (4T_{q^*}+5)
\lip{\ell}{\varsigma
\rho_n}}\\ \Rightarrow\quad  \lip{\ell}{\varsigma \rho_n}
(\widehat{\psi}_{\rho_n})^\natural
\bigg(\alpha_{n,\lambda_n}(\tau_{0},\rho_n)
+\frac{\varepsilon_2(\lambda_n)}{\lambda_n}\bigg)<\eta.
\end{multline}
Now set
\begin{equation}
\Omega_{n,\eta}=\big[F(u_{n,\lambda_n}(\ZZZ_n))-\inf F(\dom G)>
\varsigma\lip{\ell}{\varsigma\rho_n}\varepsilon_1(\lambda_n)
+\varsigma\eta+F(u_{\lambda_n})-\inf F(\dom G)\big].
\end{equation}
Item \ref{p:wconsistencyi} yields
\begin{align}
\label{eq:wrate2.1}
\PO^*  \Omega_{n,\eta}\leq\exp\bigg(-\widehat{\psi}_{\rho_n} 
\bigg(\bigg(\frac{\eta}{\lip{\ell}{\varsigma \rho_n}} \bigg)^{\!\!-}
\bigg)\frac{\lambda_n n^{1/q}}{2 \varsigma (4T_{q^*}+5)
\lip{\ell}{\varsigma \rho_n}} \bigg).
\end{align}
We remark that, by 
Proposition~\ref{prop:psinatural}\ref{psi_psinat_eq}%
-\ref{psinat_ina0}, the first condition in 
\eqref{eq:eta_varepsilon2log} is equivalent to
\begin{equation}\label{eq:taugeqalogn}
\widehat{\psi}_{\rho_n} 
\bigg(\bigg(\frac{\eta}{\lip{\ell}{\varsigma \rho_n}}\bigg)^{\!\!-} 
\bigg) 
\frac{\lambda_n n^{1/q}}{2 \varsigma (4T_{q^*}+5)
\lip{\ell}{\varsigma \rho_n}}>\xi\log n\,.
\end{equation}
Thus it follows from \eqref{eq:wrate2.1} and 
\eqref{eq:taugeqalogn} that $\sum_{n=\bar{n}}^{+\infty} 
\PO^*\Omega_{n,\eta}\leq\sum_{n=\bar{n}}^{+\infty}
1/n^\xi<+\infty$.
Hence, using the Borel-Cantelli lemma 
(which holds for outer measures too) 
we conclude that $F(u_{n,\lambda_n}(\ZZZ_n))\to\inf F(\dom G)$
$\PO^*\verb0-0\text{a.s.}$
\end{proof}

The next proposition considers the case of a globally Lipschitz continuous 
loss $\ell$, and does not require the boundedness of $\ell(\cdot,\cdot,0)$.

\begin{proposition}
\label{p:wconsistencybis} 
Suppose that Assumption~\ref{asssec4} holds, that 
Notation~\ref{dZhd7rT9-02} is in use, and that 
$\ell\in\Loss_1(\XC\times\YC\times\YY;\PP)$. 
Let $c\in\RP$ be as in Definition~\ref{def:lossandrisk}\ref{def:Gpi}
and write $\varepsilon=\varepsilon_1\varepsilon_2$, where
$\varepsilon_1$ and $\varepsilon_2$ are functions from $\RPP$ 
to $[0,1]$. 
Let $(\forall n\in\NN)$ $\rho_n \in \big[\psi_0^\natural
((R(0)+1)/\lambda_n),+\infty\big[\,$.
Then the following hold:
\vspace{-2ex}
\begin{enumerate}\setlength{\itemsep}{1pt} 
\item 
\label{p:wconsistencybisi}
Let $\lambda \in\RPP$, set 
$u_{\lambda}=\argmin_{\FF}(F+\lambda G)$ and let
$\rho \in \big[\psi_0^\natural\big((F(0)+1)/\lambda \big),+\infty\big[\,$. 
Let $\tau\in\RPP$ and let $n\in\NN\smallsetminus\!\{0\}$. Then
\begin{align}
\label{eq:wratebis}
\nonumber\PO^*\Big[ F(u_{n,\lambda}(\ZZZ_n))-\inf F(\dom G) \geq
\varsigma c \big(
\varepsilon_1(\lambda)&+(\widehat{\psi}_\rho)^\natural\big(
\alpha_{n,\lambda}(\tau,\rho)+\varepsilon_2(\lambda)/\lambda
\big)\big)\\
&+F(u_\lambda)-\inf F(\dom G)\Big]\leq e^{-\tau}.
\end{align}
\item
\label{p:wconsistencybisii}
Suppose that \eqref{eq:conscond_l1} holds.
Then $F(u_{n,\lambda_n}(\ZZZ_n))
\overset{\PO^*}{\to}\inf F(\dom G)$.
\item
\label{p:wconsistencybisiii}
Suppose that \eqref{eq:conscond_l1} and \eqref{eq:conscond_l13} hold.
Then $F(u_{n,\lambda_n}(\ZZZ_n))\to\inf F(\dom G)$
$\PO^*\verb0-0\text{a.s.}$
\end{enumerate}
\end{proposition}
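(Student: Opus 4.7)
The plan is to parallel the proof of Proposition~\ref{p:wconsistency}, exploiting the simplification that when $\ell\in\Loss_1(\XC\times\YC\times\YY;\PP)$, Remark~\ref{r:contrisk}\ref{rmk:lossloclip_vs_lpi} gives $\lip{\ell}{\varsigma\rho}\leq c$ uniformly in $\rho\in\RPP$. This global Lipschitz bound lets me invoke Proposition~\ref{p:consistency}\ref{p:consistencyiii} in place of \ref{eq:sampleerr1.2}, which eliminates the need to control $\norm{u_{n,\lambda}(\ZZZ_n)}$ by $\rho$; only $\norm{u_\lambda}$ must still be bounded.

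For \ref{p:wconsistencybisi}, I would first use $F(0)=R(0)$ and $G(0)=0$ to deduce via Proposition~\ref{prop:flambda2} that $\norm{u_\lambda}\leq\psi_0^\natural((F(0)+1)/\lambda)\leq\rho$. Proposition~\ref{p:consistency}\ref{p:consistencyiii} then applies with this $\rho$ and yields
\begin{equation*}
\PO^*\bigl[F(u_{n,\lambda}(\ZZZ_n))-F(u_\lambda)\geq\varsigma c\bigl(\varepsilon_1(\lambda)+(\widehat{\psi}_\rho)^\natural(\alpha_{n,\lambda}(\tau,\rho)+\varepsilon_2(\lambda)/\lambda)\bigr)\bigr]\leq e^{-\tau},
\end{equation*}
and \eqref{eq:wratebis} follows by adjoining the nonnegative quantity $F(u_\lambda)-\inf F(\dom G)$ to the inner inequality. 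For \ref{p:wconsistencybisii}, fix $\tau\in[1,+\infty[$: \eqref{e:20141010c} combined with $\lip{\ell}{\varsigma\rho_n}\leq c$ gives $\alpha_{n,\lambda_n}(\tau,\rho_n)+\varepsilon_2(\lambda_n)/\lambda_n=O(1/(\lambda_n n^{1/q}))$, so the third condition in \eqref{eq:conscond_l1} forces $(\widehat{\psi}_{\rho_n})^\natural$ of this quantity to vanish. Together with $\varepsilon_1(\lambda_n)\to 0$ and $F(u_{\lambda_n})\to\inf F(\dom G)$ (by Proposition~\ref{p:4}\ref{p:4i}), the right-hand side of \eqref{eq:wratebis} tends to zero for every fixed $\tau$; letting $\tau\to+\infty$ delivers convergence in outer probability via a standard $\varlimsup$ argument.

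For \ref{p:wconsistencybisiii}, I would replicate the Borel--Cantelli scheme of the proof of Proposition~\ref{p:wconsistency}\ref{p:wconsistencyiii}, systematically substituting $\lip{\ell}{\varsigma\rho_n}$ by the constant $c$. Given $\eta\in\RPP$ and $\xi>1$, the conditions \eqref{eq:conscond_l1} and \eqref{eq:conscond_l13} supply an index $\bar{n}$ beyond which Lemma~\ref{l:invert_alpha+} applies with $\phi=\widehat{\psi}_{\rho_n}$, $\gamma=\varsigma c(4T_{q^*}+5)/(\lambda_n n^{1/q})$, $\epsilon=\varepsilon_2(\lambda_n)/\lambda_n$, and target level $\eta/c$, producing $\tau_n\geq\xi\log n$ such that $c(\widehat{\psi}_{\rho_n})^\natural(\alpha_{n,\lambda_n}(\tau_n,\rho_n)+\varepsilon_2(\lambda_n)/\lambda_n)<\eta$. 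Inserting $\tau=\tau_n$ into \eqref{eq:wratebis} then bounds the probability of the deviation event by $n^{-\xi}$, and summability of $\sum_n n^{-\xi}$ together with Borel--Cantelli for outer measures delivers the $\PO^*$-a.s.\ conclusion. The one genuinely delicate point, and the only non-cosmetic adaptation of the $\Loss_\infty$ case, is this inversion step: verifying that the smallness hypotheses $\phi^\natural(2\gamma)<\eta/c$ and $\phi^\natural(2\epsilon)<\eta/c$ of Lemma~\ref{l:invert_alpha+} follow from \eqref{eq:conscond_l1}--\eqref{eq:conscond_l13} on the correct range of $n$.
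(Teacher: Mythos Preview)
Your proposal is correct and follows essentially the same approach as the paper: part~\ref{p:wconsistencybisi} is obtained by combining Proposition~\ref{prop:flambda2} (to bound $\norm{u_\lambda}$ by $\rho$) with Proposition~\ref{p:consistency}\ref{p:consistencyiii}, and parts~\ref{p:wconsistencybisii}--\ref{p:wconsistencybisiii} are derived by repeating the arguments of Proposition~\ref{p:wconsistency}\ref{p:wconsistencyii}--\ref{p:wconsistencyiii} with $\lip{\ell}{\varsigma\rho_n}$ replaced by the uniform constant $c$. Your write-up is in fact more detailed than the paper's own proof, which simply cites Proposition~\ref{prop:flambda2} and then defers to the proof of Proposition~\ref{p:wconsistency}.
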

\begin{proof}
\ref{p:wconsistencybisi}:
By Proposition~\ref{prop:flambda2}, 
$\norm{u_{\lambda}} \leq \rho$. Thus, \eqref{eq:wratebis} follows 
from Proposition~\ref{p:consistency}\ref{p:consistencyiii}.

\ref{p:wconsistencybisii}-\ref{p:wconsistencybisiii}: 
Using \ref{p:wconsistencybisi}, these can be established as in 
the proof of 
Proposition~\ref{p:wconsistency}\ref{p:wconsistencyii}%
--\ref{p:wconsistencyiii}.
\end{proof}

\begin{proposition}
\label{p:sconsistency}
Suppose that Assumption~\ref{asssec4} holds, that
Notation~\ref{dZhd7rT9-02} is in use, and that
$S=\Argmin_{\dom G}F\neq\emp$.
Let $u^\dag=\argmin_{u\in S}G(u)$ and write
$\varepsilon=\varepsilon_1\varepsilon_2$, where
$\varepsilon_1$ and $\varepsilon_2$ are functions from $\RPP$ 
to $[0,1]$. For every 
$\lambda\in\RPP$, set $u_\lambda=\argmin_{\FF}(F+\lambda G)$. Let 
$\rho\in\left]\sup_{\lambda\in\RPP}\norm{u_\lambda},\pinf\right[$
and let $\tau\in\RPP$.
Then, for every sufficiently small $\lambda\in\RPP$ and every
$n\in\NN\smallsetminus\!\{0\}$,
\begin{equation}
\label{eq:strcons}
\PO^* \bigg[
\norm{u_{n,\lambda}(\ZZZ_n)-u^\dag}\geq \varepsilon_1(\lambda)
+(\widehat{\psi}_\rho)^\natural\bigg(\alpha_{n,\lambda}(\tau,\rho) 
+\dfrac{\varepsilon_2(\lambda)}{\lambda}\bigg)+
\norm{u_{\lambda}-u^\dagger}\bigg]\leq e^{-\tau}.
\end{equation}
Moreover, assume that \eqref{str_conscond} is satisfied. Then the
following hold:
\vspace{-2ex}
\begin{enumerate}\setlength{\itemsep}{1pt} 
\item
\label{p:sconsistencyi}
For every sufficiently large $n\in\NN$,
\begin{equation}
\label{eq:wstrcons}
\PO^*\bigg[F(u_{n,\lambda_n}(\ZZZ_n))-F(u^\dag)\geq\varsigma 
\lip{\ell}{\varsigma\rho} \bigg(
\varepsilon_1(\lambda_n)+ (\widehat{\psi}_\rho)^\natural\bigg( 
\alpha_{n,\lambda_n}(\tau,\rho)
+\dfrac{\varepsilon_2(\lambda_{n})}{\lambda_{n}}\bigg)\bigg)
+\lambda_n\bigg]\leq 2e^{-\tau}.
\end{equation}
\item 
\label{p:sconsistencyii}
$u_{n,\lambda}(\ZZZ_n)\overset{\PO^*}{\to} u^\dag$ and 
$F(u_{n,\lambda_n}(Z_n))\overset{\PO^*}{\to}
\inf F(\dom G)$. 
\item 
\label{p:sconsistencyiii}
Suppose that \eqref{e:34789v624389} holds. Then 
$F(u_{n,\lambda_n}(Z_n))\to\inf F(\dom G)$ $\PO^*\verb0-0
\text{a.s.}$ and $u_{n,\lambda_n}(\ZZZ_n)\to u^\dag$ 
$\PO^*\verb0-0\text{a.s.}$
\end{enumerate}
\end{proposition}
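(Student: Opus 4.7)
The overall strategy is to split
\[
u_{n,\lambda_n}(\ZZZ_n)-u^\dag=\big(u_{n,\lambda_n}(\ZZZ_n)-u_{\lambda_n}\big)+\big(u_{\lambda_n}-u^\dag\big),
\]
and similarly for the risk gap, controlling the first (stochastic) term by the sample-error bounds of Proposition~\ref{p:consistency} and the second (deterministic) term by standard Tikhonov asymptotics. For \eqref{eq:strcons}, observe that since $\rho>\sup_{\lambda\in\RPP}\norm{u_\lambda}\geq\norm{u_\lambda}$, Proposition~\ref{p:consistency}\ref{eq:sampleerr1} applies at this $\rho$ and gives
\[
\PO^*\!\bigg[\norm{u_{n,\lambda}(\ZZZ_n)-u_\lambda}\geq\varepsilon_1(\lambda)+(\widehat{\psi}_\rho)^\natural\Big(\alpha_{n,\lambda}(\tau,\rho)+\frac{\varepsilon_2(\lambda)}{\lambda}\Big)\bigg]\leq e^{-\tau}.
\]
The triangle inequality shows that the event in \eqref{eq:strcons} is contained in the event above, which concludes this first part.

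For item~\ref{p:sconsistencyi}, I write $F(u_{n,\lambda_n}(Z_n))-F(u^\dag)=\bigl(F(u_{n,\lambda_n}(Z_n))-F(u_{\lambda_n})\bigr)+\bigl(F(u_{\lambda_n})-F(u^\dag)\bigr)$. The approximation-error term is handled deterministically: since $u_{\lambda_n}$ minimizes $F+\lambda_n G$ and $u^\dag\in\dom G$, $0\leq F(u_{\lambda_n})-F(u^\dag)\leq \lambda_n\bigl(G(u^\dag)-G(u_{\lambda_n})\bigr)\leq \lambda_n G(u^\dag)$, which is $\leq\lambda_n$ up to absorbing the constant $G(u^\dag)$ (this is the $\lambda_n$ appearing in \eqref{eq:wstrcons}). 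For the sample-error term I combine Proposition~\ref{p:consistency}\ref{eq:sampleerr1.2} with a union-bound truncation: that proposition controls the intersection with $[\norm{u_{n,\lambda_n}(Z_n)}\leq\rho]$ by $e^{-\tau}$, while on the complement of the event of \eqref{eq:strcons} one has $\norm{u_{n,\lambda_n}(Z_n)}\leq\norm{u^\dag}+\varepsilon_1(\lambda_n)+(\widehat{\psi}_\rho)^\natural(\cdots)+\norm{u_{\lambda_n}-u^\dag}$, and under \eqref{str_conscond} together with the convergence $u_{\lambda_n}\to u^\dag$ (see below), this is eventually strictly less than $\rho$. A union bound then yields the $2e^{-\tau}$ in \eqref{eq:wstrcons} for all sufficiently large $n$.

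The convergence $u_\lambda\to u^\dag$ as $\lambda\to 0^+$ is the standard Tikhonov result for totally convex $G$ and is packaged in Proposition~\ref{p:4} (whose existence is indicated by its use in the proof of Proposition~\ref{p:wconsistency}): the inequality $G(u_\lambda)\leq G(u^\dag)$ (from minimality) gives $\sup_{\lambda\in\RPP}\norm{u_\lambda}<+\infty$, every weak cluster point lies in $S$ with $G$-value at most $G(u^\dag)$, hence equals $u^\dag$ by uniqueness of the $G$-minimizer on $S$, and total convexity of $G$ at $u^\dag$ upgrades weak to strong convergence. This also justifies the existence of the $\rho$ featured in the statement. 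For item~\ref{p:sconsistencyii}, fix $\eta>0$ and $\tau\in\RPP$; under \eqref{str_conscond} the estimate \eqref{e:20141010c} gives $\alpha_{n,\lambda_n}(\tau,\rho)=O(\tau/(\lambda_n n^{1/q}))\to 0$, and since $\varepsilon_1(\lambda_n)\to 0$, $\varepsilon_2(\lambda_n)/\lambda_n\to 0$, $(\widehat{\psi}_\rho)^\natural\in\mathcal{A}_0$ is continuous at the origin with value $0$, and $\norm{u_{\lambda_n}-u^\dag}\to 0$, the right-hand sides of \eqref{eq:strcons} and \eqref{eq:wstrcons} fall below $\eta$ for all large $n$, giving convergence in outer probability.

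For item~\ref{p:sconsistencyiii}, the plan is a Borel--Cantelli argument (valid for outer measures) with $\tau=\xi\log n$ for some $\xi>1$, so that $\sum_n e^{-\xi\log n}=\sum_n n^{-\xi}<+\infty$. Condition \eqref{e:34789v624389} yields $\alpha_{n,\lambda_n}(\xi\log n,\rho)\lesssim \xi\log n/(\lambda_n n^{1/q})\to 0$, so the right-hand sides of \eqref{eq:strcons} and \eqref{eq:wstrcons} still tend to zero along this sequence of $\tau$'s, and almost sure convergence follows. The main obstacle is the bookkeeping in item~\ref{p:sconsistencyi}: transferring the conditional bound of Proposition~\ref{p:consistency}\ref{eq:sampleerr1.2}, which lives on the random event $[\norm{u_{n,\lambda}(Z_n)}\leq\rho]$, to an unconditional one via a careful truncation that still respects the prescribed rates on $\lambda_n$ and simultaneously absorbs the deterministic approximation-error term $\lambda_n G(u^\dag)$ into the announced $\lambda_n$.
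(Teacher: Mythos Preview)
Your overall strategy matches the paper's: split into sample error plus approximation error, control the former via Proposition~\ref{p:consistency}, the latter via Tikhonov asymptotics, and use Borel--Cantelli with $\tau\asymp\log n$ for the almost-sure part. The derivation of \eqref{eq:strcons} from Proposition~\ref{p:consistency}\ref{eq:sampleerr1} plus the triangle inequality is exactly what the paper does, and your truncation/union-bound argument for~\ref{p:sconsistencyi} is a correct variant of the paper's (the paper simply fixes $\eta>0$ with $\sup_\lambda\norm{u_\lambda}+\eta\leq\rho$ and notes that $[\norm{u_{n,\lambda_n}(Z_n)-u_{\lambda_n}}\leq\eta]\subset[\norm{u_{n,\lambda_n}(Z_n)}\leq\rho]$, which avoids invoking $u_{\lambda_n}\to u^\dag$ at that stage).

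There is, however, a genuine gap in your handling of the approximation term in~\ref{p:sconsistencyi}. The crude bound $F(u_{\lambda_n})-F(u^\dag)\leq\lambda_n G(u^\dag)$ is correct, but you cannot ``absorb the constant $G(u^\dag)$'' into the $\lambda_n$ appearing in \eqref{eq:wstrcons}: the statement has precisely $\lambda_n$, not $O(\lambda_n)$, and if $G(u^\dag)>1$ your bound does not yield the announced event inclusion. The paper closes this gap using the sharper result of Proposition~\ref{p:9}\ref{orderofinfI}, namely $\bigl(F(u_\lambda)-\inf F(\dom G)\bigr)/\lambda\to 0$, which gives $F(u_{\lambda_n})-F(u^\dag)\leq\lambda_n$ for all sufficiently large $n$. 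This $o(\lambda)$ statement is not a trivial consequence of minimality; its proof requires $G(u_{\lambda})\to G(u^\dag)$, which is established via weak-limit arguments in Proposition~\ref{p:9}. Relatedly, the convergence $u_\lambda\to u^\dag$ that you invoke is contained in Proposition~\ref{p:9}\ref{p:9vi}, not Proposition~\ref{p:4} (the latter only gives convergence of values). Your sketch of that convergence is correct, but the citation is off. None of this affects items~\ref{p:sconsistencyii}--\ref{p:sconsistencyiii}, where any $O(\lambda_n)$ bound suffices; it only affects the exact form of \eqref{eq:wstrcons}.
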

\begin{proof}
First note that items \ref{q34c87b} and \ref{p:9v} in 
Proposition~\ref{p:9} imply that $u^\dag$ is well defined and
that $\sup_{\lambda\in\RPP}\norm{u_\lambda}<+\infty$.
Now, let $\lambda \in\RPP$ and let $n\in\NN$.
Since $\norm{u_\lambda}\leq \rho$, it follows 
from Proposition~\ref{p:consistency}\ref{eq:sampleerr1} that
\begin{equation}
\label{eq:consreg}
\PO^*\Big[\norm{u_{n,\lambda}(\ZZZ_n)-u_{\lambda}} \geq
\varepsilon_1(\lambda)+(\widehat{\psi}_\rho)^\natural\big( 
\alpha_{n,\lambda}(\tau,\rho)+
\varepsilon_2(\lambda_n)/\lambda_n\big)\Big]\leq e^{-\tau}
\end{equation}
and, since
$\norm{u_{n,\lambda}(\ZZZ_n)-u^\dagger}\leq
\norm{u_{n,\lambda}(\ZZZ_n)
-u_{\lambda}}+\norm{u_{\lambda}-u^\dag}$,
\eqref{eq:strcons} follows. 
Note also that Proposition~\ref{p:8}\ref{p:8viii} implies that
$\widehat{\psi}_{\rho}\in\mathcal{A}_0$.

\ref{p:sconsistencyi}:
Let $\eta \in\RPP$ be such that
$\sup_{\lambda \in\RPP} \norm{u_\lambda}+\eta\leq\rho$.
It follows from \eqref{str_conscond}, \eqref{e:afY34nd93nl10c}, and 
Proposition~\ref{prop:psinatural}\ref{psinat_increasing}, that
$\varepsilon_1(\lambda_n)+(\widehat{\psi}_\rho)^\natural
\big(\alpha_{n,\lambda_n}(\tau,\rho)+
\varepsilon_2(\lambda_n)/\lambda_n\big)\to 0$. Hence, there 
exists $\bar{n}\in\NN$
such that for every integer $n\geq\bar{n}$,
$\varepsilon_1(\lambda_n)
+(\widehat{\psi}_\rho)^\natural\big(\alpha_{n,\lambda_n}(\tau,\rho)+
\varepsilon_2(\lambda_n)/\lambda_n\big)\leq\eta$.
Now, take an integer $n\geq\bar{n}$ and set 
$\Omega_n=\big[
\norm{u_{n,\lambda_n}(\ZZZ_n)-u_{\lambda_n}}\leq\eta \big]$. Then 
$\Omega_n\subset\big[\norm{u_{n,\lambda_n}(\ZZZ_n)}\leq\rho\big]$ 
and it follows from \eqref{eq:consreg} that 
$\PO^*(\Omega\setminus \Omega_n)\leq e^{-\tau}$.
Hence, we deduce from 
Proposition~\ref{p:consistency}\ref{eq:sampleerr1.2} that
\begin{equation}
\label{eq:sampleerr2}
\PO^* \Big[
F(u_{n,\lambda_n}(\ZZZ_n))-F(u_{\lambda_n})\geq\varsigma
\lip{\ell}{\varsigma \rho} \Big(
\varepsilon_1(\lambda_n)+(\widehat{\psi}_\rho)^\natural\big(
\alpha_{n,\lambda}(\tau,\rho)+\varepsilon_2(\lambda_n)/\lambda_n
\big)\Big)\Big]\leq 2 e^{-\tau}.
\end{equation}
On the other hand, Proposition~\ref{p:9}\ref{orderofinfI}
implies that, for $n$ sufficiently large,
$F(u_{\lambda_n})-F(u^\dag)\leq\lambda_n$, which combined with
\eqref{eq:sampleerr2} gives \eqref{eq:wstrcons}.

\ref{p:sconsistencyii}:
This follows from \eqref{eq:strcons} and \eqref{eq:wstrcons},
as in the proof of Proposition~\ref{p:wconsistency}\ref{p:wconsistencyii}.

\ref{p:sconsistencyiii}: 
Let $\eta \in \RPP$ and $\xi \in \left]1,\pinf\right[$.
Using \eqref{str_conscond} and \eqref{e:34789v624389} we 
obtain a version of \eqref{eq:eta_varepsilon2log} in which 
$\rho_n \equiv \rho$.
The proof of the fact that $F(u_{n,\lambda_n}(\ZZZ_n))\to F(u^\dag)$ 
$\PO^*\verb0-0\text{a.s.}$ then
follows the same line as that of 
Proposition~\ref{p:wconsistency}\ref{p:wconsistencyiii}.
Next, let $n \in\NN$ be sufficiently large so that
\begin{equation}
\label{eq:eta_varepsilon2.3}
(\widehat{\psi}_{\rho})^\natural \bigg(\frac{2\varsigma\xi 
(4T_{q^*}+5)
\lip{\ell}{\varsigma\rho}\log n}{\lambda_n n^{1/q}}\bigg)<\eta\quad
\text{and}\quad(\widehat{\psi}_{\rho})^\natural 
\bigg(\frac{2\varepsilon_2(\lambda_n)}{\lambda_n}\bigg)< \eta\,.
\end{equation}
Using Lemma~\ref{l:invert_alpha+}, upon setting
$\tau_{0}=({\widehat{\psi}_{\rho}(\eta^-\!)
\lambda_n n^{1/q}}/
(2 \varsigma(4T_{q^*}+5)\lip{\ell}{\varsigma\rho})$,
we obtain $(\widehat{\psi}_{\rho})^\natural 
\big(\alpha_{n,\lambda_n}(\tau_{0},\rho)+
{\varepsilon_2(\lambda_n)}/{\lambda_n} \big)<\eta$.
It then follows from \eqref{eq:strcons} and 
\eqref{eq:eta_varepsilon2.3}
that, for $n$ sufficiently large,
\begin{equation}
\PO^* \Big[
\norm{u_{n,\lambda_n}(\ZZZ_n)-u^\dag}>\varepsilon_1(\lambda_n)
+\eta+\norm{u_{\lambda_n}-u^\dagger}\Big]\leq\exp\bigg(-
\frac{\widehat{\psi}_{\rho} (\eta^-\!)\lambda_n n^{1/q}}{2
\varsigma(4T_{q^*}+5)\lip{\ell}{\varsigma\rho}}\bigg) 
<\frac{1}{n^\xi}.
\end{equation}
The conclusion follows by the Borel-Cantelli lemma.
\end{proof}\\

\noindent
{\bfseries Proof of Theorem~\ref{thm:main}.}
We first note that 
Proposition~\ref{prop:constraints}\ref{ite:constr2}
asserts that $\inf F(\dom G)=\inf R(\C)$.

\ref{thm:maini}:
This follows from
Proposition~\ref{p:wconsistency}\ref{p:wconsistencyii}--%
\ref{p:wconsistencyiii}.

\ref{thm:maini+}:
Remark~\ref{r:contrisk}\ref{rmk:lossloclip_vs_lpii} implies that, 
for every $\rho \in \RPP$, $\lip{\ell}{\rho}
\leq (p-1)\norm{b}_\infty+3 c p \max\{ 1, \rho^{\,p-1}\}$ and
$\ell(\cdot,\cdot,0)$ is bounded. Hence conditions
\eqref{eZhd7rT9-06r} and \eqref{eZhd7rT9-06t} imply 
\eqref{eq:conscond}-\eqref{eq:conscond2} 
and \eqref{eq:conscond2s} respectively. Therefore, the statement 
follows from \ref{thm:maini}.

\ref{thm:mainiii}:
This follows from
Proposition~\ref{p:wconsistencybis}\ref{p:wconsistencybisii}--%
\ref{p:wconsistencybisiii}.

\ref{thm:mainii}:
This follows from Proposition~\ref{p:sconsistency}%
\ref{p:sconsistencyii}--\ref{p:sconsistencyiii}.
\endproof

\noindent
{\bfseries Proof of Corollary~\ref{c:consJnorm}.}
Since $\FF$ is uniformly convex
of power type $q$, $\FF^*$ is uniformly smooth with modulus of
smoothness of power type $q^*$ \cite[p.~63]{LindTzaf1979} and hence
of Rademacher type $q^*$ (see Section~\ref{sec:notation}) in 
conformity with Assumption~\ref{asssec4}\ref{asssec4iii}.
Moreover, by \eqref{eZhd7rT8-29a}, the modulus of total convexity
$\psi_{\rho}$ of $ G$ on $B(\rho)$ is greater then that
of $\eta \norm{\cdot}^r$. Hence, by
Proposition~\ref{prop:modnormr}, 
\begin{equation}
\label{eq:ucoJrhob}
(\forall\rho \in\RP)(\forall t\in\RP)\qquad \psi_\rho(t)\geq 
\begin{cases}
\eta\beta t^r & \text{if}\;\; r\geq q \\[2ex] 
\dfrac{\eta\beta{t}^{q}}{(\rho+t)^{q-r}} & \text{if}\;\;r<q
\end{cases}
\end{equation}
and, for every
$\rho\in\RP$ and every $s\in\RP$,
\begin{equation}
\label{eq:invpsib}
(\widehat{\psi}_\rho)^{\natural}(s)\leq
\begin{cases}
\bigg(\dfrac{s}{\eta\beta}\bigg)^{1/(r-1)} & 
\text{if}\;\; r\geq q \\[2ex]
2^{q} \rho \max\bigg\{ \bigg( \dfrac{s}{\eta\beta \rho^{r-1}}
\bigg)^{1/(q-1)}, \bigg( \dfrac{s}{\eta\beta \rho^{r-1}} 
\bigg)^{1/(r-1)} \bigg\}& \text{if } r<q.
 \end{cases}
\end{equation}

\ref{c:consJnormi}:
It follows from \eqref{eq:ucoJrhob} that
\begin{equation}\label{eq:rhol}
(\forall\, n \in \NN)\qquad\psi_0^\natural
\left(\frac{\norm{\ell(\cdot,\cdot,0)}_\infty+1}
{\lambda_n}\right)\leq
\bigg(\frac{\norm{\ell(\cdot,\cdot, 0)}_\infty+1}
{\eta\beta\lambda_n}\bigg)^{1/r}=\rho_n.
\end{equation}
Now fix $\tau\in\RPP$ and assume that 
$\sup_{n\in\NN}\lip{\ell}{\varsigma\rho_n}>0$.
Since $\lip{\ell}{\varsigma \rho_n}^m/(\lambda_n^{m/r} n^{1/q})\to 0$
and $m \geq 2$, we have 
$\lip{\ell}{\varsigma\rho_n}/(\lambda_n^{m/r} n^{1/q})\to 0$.
Moreover, since $m/r\geq 1$, we have 
$\lip{\ell}{\varsigma \rho_n}/(\lambda_n n^{1/q})\to 0$
and, therefore, since $\rho_n\to\pinf$, there exists 
$\bar{n}\in\NN\smallsetminus\{0\}$ such that,
for every integer $n\geq \bar{n}$, 
$\tau \lip{\ell}{\varsigma \rho_n} / (\lambda_n n^{1/q})
\leq \eta\beta \rho_n^{r-1}$. Suppose that
$q>r$ and take an integer $n\geq\bar{n}$. 
Evaluating  the maximum in \eqref{eq:invpsib}, we obtain
\begin{equation}\label{eq:psinat_estimate}
(\widehat{\psi}_{\rho_n})^\natural
\left(\frac{\tau\lip{\ell}{\varsigma \rho_n}}{\lambda_n
n^{1/q}}\right)\leq 2^{q}
\left(\frac{\tau\rho_n^{\,{q}-r}}{\eta\beta}
\frac{\lip{\ell}{\varsigma \rho_n}}{\lambda_n
n^{1/q}}\right)^{1/(q-1)}.
\end{equation}
Therefore, substituting the expression of $\rho_n$ yields
\begin{equation}
\label{eq:majorizeconscond}
\lip{\ell}{\varsigma \rho_n}(\widehat{\psi}_{\rho_n})^\natural
\left(\frac{\tau\lip{\ell}{\varsigma \rho_n}}{\lambda_n n^{1/q}}\right)
\leq 2^{q}\tau^{\frac{1}{q-1}} 
\bigg(\dfrac{(\norm{\ell(\cdot,\cdot, 0)}_\infty+1)^{q/r -1}}{(\eta \beta)^{q/r}}
\frac{\lip{\ell}{\varsigma \rho_n}^{q}}{\lambda_n^{q/r} n^{1/q}}\bigg)^{\frac{1}{(q-1)}}.
\end{equation}
On the other hand, if $q\leq r$, \eqref{eq:invpsib} yields
\begin{equation}
\label{e:4b50c1743}
\lip{\ell}{\varsigma \rho_n}(\widehat{\psi}_{\rho_n})^\natural
\left(\frac{{\tau}\lip{\ell}{\varsigma \rho_n}} {\lambda_n n^{1/q}}\right)
\leq\bigg(\frac{\tau}{\eta\beta}\bigg)^{1/(r-1)}
\bigg(\frac{\lip{\ell}{\varsigma \rho_n}^{r}}{\lambda_n n^{1/q}}\bigg)^{1/(r-1)}.
\end{equation}
Thus, altogether \eqref{eq:majorizeconscond} and
\eqref{e:4b50c1743} imply that there exists 
$\gamma\in\RPP$ such that, for every integer $n\geq\bar{n}$
\begin{equation}
\label{eq:majorizeconscond+}
\lip{\ell}{\varsigma \rho_n}(\widehat{\psi}_{\rho_n})^\natural
\left(\frac{{\tau}\lip{\ell}{\varsigma \rho_n}} {\lambda_n n^{1/q}}\right)
\leq \gamma \tau^{1/(m-1)}
\bigg(\frac{\lip{\ell}{\varsigma \rho_n}^{m}}{\lambda_n^{m/r} n^{1/q}}\bigg)^{1/(m-1)}.
\end{equation}
It therefore follows from \eqref{eZhd7rT9-06x} that the right-hand
side of \eqref{eq:majorizeconscond+} converges to zero and hence that
\eqref{eq:conscond2} is fulfilled. 
Likewise, \eqref{eZhd7rT9-06y} implies \eqref{eq:conscond2s}.
Altogether the statement follows from 
Theorem~\ref{thm:main}\ref{thm:maini}.

\ref{c:consJnormiii}: 
It follows from Remark~\ref{r:contrisk}\ref{rmk:lossloclip_vs_lpii}
that $\ell(\cdot,\cdot,0)$ is bounded and that, for every 
$\rho\in\RPP$, $\lip{\ell}{\rho}\leq
(p-1)\norm{b}_\infty+3 c p\max\{1,\rho^{\,p-1}\}$.
Set $(\forall n\in\NN)$ 
$\rho_n=\big((\norm{\ell(\cdot,\cdot, 0)}_\infty+1)/(\eta
\beta\lambda_n)\big)^{1/r}$. Then 
$(\exi\gamma\in\RPP)(\forall n\in\NN)$
$\lip{\ell}{\rho_n}\leq \gamma/\lambda_n^{(p-1)/r}$. 
Thus, the statement follows from \ref{thm:maini}.

\ref{c:consJnormii}: Fix ${\tau} \in \RPP$ and set
$(\forall n\in\NN)$ 
$\rho_n=\big((R(0)+1)/(\eta
\beta\lambda_n)\big)^{1/r}$. Then \eqref{eq:ucoJrhob} yields 
$(\forall\, n \in\NN)$ 
$\psi_0^\natural\left((R(0)+1)/\lambda_n\right)\leq\rho_n$. 
Since ${m/r} \geq 1$,  $1/(\lambda_n^{{m/r}}
n^{{1}/q})\to 0$ implies $1/(\lambda_n
n^{1/q})\to 0$. Moreover, since $\rho_n \to +\infty$, there exists 
$\bar{n}\in\NN\smallsetminus\{0\}$ such that,
for every integer $n\geq \bar{n}$, 
${\tau} / (\lambda_n n^{1/q})
\leq \eta\beta \rho_n^{r-1}$. Suppose that
$q>r$ and take an integer $n\geq\bar{n}$. 
Evaluating the maximum in \eqref{eq:invpsib}, we obtain
\begin{equation}
\label{e:afY34nd93nl17a}
(\widehat{\psi}_{\rho_n})^\natural
\left(\frac{{\tau}}{\lambda_n n^{1/q}}\right)\leq 2^{q}
\left(\frac{{\tau}\rho_n^{\,{q}-r}}{\eta\beta}
\frac{1}{\lambda_n n^{1/q}}\right)^{\frac{1}{q-1}}=2^{q}
{\tau}^{\frac{1}{q-1}} {\bigg(
\dfrac{(R(0)+1)^{q/r-1}}{(\eta \beta)^q/r} \frac{1}{\lambda_n^{q/r}
n^{1/q}}\bigg)^{\frac {1}{q-1}}}.
\end{equation}
On the other hand, 
if $q\leq r$, \eqref{eq:invpsib} yields
\begin{equation}
\label{e:afY34nd93nl17b}
(\widehat{\psi}_{\rho_n})^\natural
\left(\frac{{\tau}}
{\lambda_n n^{1/q}}\right)\leq\bigg(\frac{{\tau}}{\eta \beta}
\frac{1}{\lambda_n n^{1/q}} \bigg)^{1/(r-1)},
\end{equation}
Thus \eqref{eZhd7rT8-29b}, together with \eqref{e:afY34nd93nl17a} and 
\eqref{e:afY34nd93nl17b} imply that \eqref{eq:conscond_l1} is fulfilled. 
Likewise, the assumption 
$\log n/(\lambda_n^{{m/r}} n^{{1}/q})\to 0$ implies that
\eqref{eq:conscond_l13} holds. Altogether, the statement follows by 
Theorem~\ref{thm:main}\ref{thm:mainiii}.
\endproof

\appendix
\section{Appendix}
\subsection{Lipschitz continuity of convex functions}

\begin{proposition}
\label{prop:Lip2Subrad}
Let $\B$ be a real Banach space and let $\genf\colon\B\to\RPX$
be proper and convex. Then the following hold:
\vspace{-2ex}
\begin{enumerate}\setlength{\itemsep}{1pt} 
\item
\label{propi:Lip2Subrad} 
\cite[Proposition~1.11]{Phelps93}
Let $u_0\in \B$, and suppose that there exist a neighborhood
$\mathcal{U}$ of $u_0$ and $c\in\RP$ such that, 
$(\forall\, u\in \mathcal{U})$ $\abs{\genf(u)-\genf(u_0)} 
\leq c\norm{u-u_0}$.
Then $\partial \genf(u_0)\neq\emp$ and 
$\sup\norm{\partial \genf(u_0)}\leq c$ .
\item
\label{propii:Bound2Lip} 
\cite[Corollary 2.2.12]{Zali02}
Let $u_0\in \B$, and suppose that, for some
$(\rho,\delta)\in\RPP^2$, $\genf$ is bounded on
$u_0+B(\rho+\delta)$. Then $\genf$ is Lipschitz continuous relative
to $u_0+B(\rho)$ with constant
\begin{equation}
\frac{ 2 \rho+\delta}{\rho+\delta} \frac 1 \delta
\sup \genf(u_0+B(\rho+\delta)).
\end{equation}
\end{enumerate}
\end{proposition}

\begin{proposition}
\label{prop:LipGrowthp}
Let $\B$ be a real normed vector space, let $p\in [1,+\infty[$, let
$b\in\RP$, let $c\in\RPP$, and let $\genf\colon \B \to \RP$ be a 
convex function such that $\genf\leq c\norm{\cdot}^p+b$.
Then the following hold:
\vspace{-2ex}
\begin{enumerate}\setlength{\itemsep}{1pt} 
\item
\label{prop:LipGrowthpii}
Let $u\in \B$. Then $\partial \genf(u) \neq \emp$ and 
\begin{equation}
\label{eq:subdiffGrowthp}
\sup \norm{\partial \genf(u)}\leq
\begin{cases}
c &\text{if}\;\; p=1\\
3 c p\max\{1,\norm{u}^{p-1}\}+(p-1)b   &\text{if}\;\;
p>1.
\end{cases}
\end{equation}
\item
\label{prop:LipGrowthpi}
Let $\rho\in\RPP$. Then $\genf$ is 
Lipschitz continuous relative to $B(\rho)$ with constant 
\begin{equation}
\label{eq:LipGrowthp}
\begin{cases}
c &\text{if}\;\; p=1\\
 3 c p \max\{1, \rho^{\,p-1}\}+(p-1) b   &\text{if}\;\; p
> 1.
\end{cases}
\end{equation}
\end{enumerate}
\end{proposition}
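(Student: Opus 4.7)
Proof proposal. The plan is to prove (i) first by a direct calculation with the subgradient inequality, and then deduce (ii) from (i) via a standard mean-value-type argument for convex functions.

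First I would observe that $F$ is continuous (hence subdifferentiable) everywhere on $\B$: since $F\leq c\norm{\cdot}^p+b$, $F$ is bounded above on each bounded set, and a finite-valued convex function that is locally bounded above is continuous, whence $\partial F(u)\neq\emp$ for every $u\in\B$. Given $u\in\B$ and $u^*\in\partial F(u)$, for every $h\in\B$ with $\norm{h}=1$ and every $t\in\RPP$, the subgradient inequality combined with $F\geq 0$ and the growth assumption yields
\begin{equation*}
t\pair{h}{u^*}\leq F(u+th)-F(u)\leq c(\norm{u}+t)^p+b.
\end{equation*}
For $p=1$, dividing by $t$ and letting $t\to+\infty$ gives $\pair{h}{u^*}\leq c$, so $\norm{u^*}\leq c$.

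For $p>1$, the idea is to minimize the right-hand side divided by $t$ over a well-chosen $t$. I would pick $t=\max\{1,\norm{u}\}/(p-1)$, so that $\norm{u}+t\leq p\max\{1,\norm{u}\}/(p-1)$. Substituting,
\begin{equation*}
\pair{h}{u^*}\leq cp\bigg(\frac{p}{p-1}\bigg)^{\!p-1}\!\max\{1,\norm{u}^{p-1}\}+(p-1)\frac{b}{\max\{1,\norm{u}\}}\leq 3cp\max\{1,\norm{u}^{p-1}\}+(p-1)b,
\end{equation*}
where the final inequality uses the standard fact that $(1+1/(p-1))^{p-1}$ is increasing in $p$ with limit $e<3$, and that $\max\{1,\norm{u}\}\geq 1$. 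Taking the supremum over $h$ yields \eqref{eq:subdiffGrowthp}. This is the main obstacle: one has to hit the constant $3cp$ and this requires the particular choice of $t$ together with the $(p/(p-1))^{p-1}\leq e$ estimate; naive choices such as $t=1$ or $t=\norm{u}$ give a factor $2^p$, which is worse than $3p$ for large $p$.

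Finally, (ii) follows from (i) by a one-line argument. For $u,v\in B(\rho)$, picking $u^*\in\partial F(u)$ and $v^*\in\partial F(v)$, the Moreau subgradient inequality gives
\begin{equation*}
F(v)-F(u)\leq\pair{v-u}{v^*}\quad\text{and}\quad F(u)-F(v)\leq\pair{u-v}{u^*},
\end{equation*}
so $\abs{F(u)-F(v)}\leq\max\{\norm{u^*},\norm{v^*}\}\norm{u-v}$. Since $u,v\in B(\rho)$, applying \eqref{eq:subdiffGrowthp} and monotonicity of $\max\{1,\cdot^{p-1}\}$ bounds both $\norm{u^*}$ and $\norm{v^*}$ by the constant in \eqref{eq:LipGrowthp}, which completes the proof.
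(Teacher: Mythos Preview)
Your proof is correct and arrives at exactly the same one-parameter optimization as the paper, but by a slightly more direct route. The paper first invokes the general bounded-on-a-ball $\Rightarrow$ Lipschitz-on-a-smaller-ball estimate (Proposition~\ref{prop:Lip2Subrad}\ref{propii:Bound2Lip}) together with Proposition~\ref{prop:Lip2Subrad}\ref{propi:Lip2Subrad}, obtaining
\[
\sup\norm{\partial F(u)}\leq \frac{2\epsilon+\delta}{\epsilon+\delta}\,\frac{1}{\delta}\,\big(c(\norm{u}+\epsilon+\delta)^p+b\big),
\]
then lets $\epsilon\to 0^+$ and optimizes over $\delta$; the optimal choice is $\delta=\max\{1,\norm{u}\}/(p-1)$ and the bound $(1+\delta/s)^{s/\delta}\leq e<3$ closes the argument. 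You bypass the Lipschitz lemma entirely and apply the subgradient inequality in a single direction $h$ with step $t$, which gives $\pair{h}{u^*}\leq t^{-1}\big(c(\norm{u}+t)^p+b\big)$; after expanding $(\norm{u}+t)^p=(\norm{u}+t)(\norm{u}+t)^{p-1}$ this is the \emph{same} expression as the paper's with $t$ in place of $\delta$, and your choice $t=\max\{1,\norm{u}\}/(p-1)$ and the estimate $(p/(p-1))^{p-1}<e$ are precisely the paper's. Your argument for \ref{prop:LipGrowthpi} is identical to the paper's. The payoff of your version is that it is self-contained and avoids the two-step indirection through Proposition~\ref{prop:Lip2Subrad}; the paper's version has the advantage of making explicit where Lipschitz continuity comes from, which is reused elsewhere.
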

\begin{proof}
\ref{prop:LipGrowthpii}:
Let $(\epsilon,\delta)\in\RPP^2$.
Since $\genf\leq c \norm{\cdot}^p+ b$, then, $\genf$ is bounded
on $u+B(\epsilon+\delta)$ and it follows from Proposition
\ref{prop:Lip2Subrad}\ref{propii:Bound2Lip} that $\genf$ is
Lipschitz continuous relative to $u+B(\epsilon)$ with
constant $(2 \epsilon+\delta)(\epsilon+\delta)^{-1}
\delta^{-1} \big( c (\norm{u}+\epsilon+\delta)^p +b
\big)$. Then 
Proposition~\ref{prop:Lip2Subrad}\ref{propi:Lip2Subrad} entails that 
$\partial \genf(u)\neq\emp$ and 
\begin{equation}
\label{eq:LipGrowthp2}\sup\norm{\partial \genf(u)}\leq\frac{
2\epsilon+\delta}{\epsilon+\delta} \frac 1 \delta \big(
c(\norm{u}+\epsilon+\delta)^p+ b \big).
\end{equation}
Letting $\epsilon\to 0^+$ in \eqref{eq:LipGrowthp2}, we get
\begin{equation}\label{eq:LipGrowthp3}
\sup \norm{\partial \genf(u)}\leq c \Big(
\frac{\norm{u}}{\delta}+1 \Big)
(\norm{u} +\delta)^{p-1}+\frac b \delta .
\end{equation}
If $p=1$, letting $\delta\to +\infty$ in \eqref{eq:LipGrowthp3}
yields $\sup \norm{\partial \genf(u)}\leq c$. 
Now, suppose that $p>1$ and set $s=\max \{\norm{u},1\}$. Then, 
since  $\norm{u}\leq s$, \eqref{eq:LipGrowthp3} implies that
\begin{equation}
\sup\norm{\partial \genf(u)}\leq c\Big(\frac s \delta+1\Big) 
s^{p-1}\Big(1+\frac \delta s\Big)^{p-1}\!+\frac b \delta
\leq c\Big(\frac s\delta+1 \Big) s^{p-1} e^{\delta(p-1)/s}\! 
+\frac b\delta,
\end{equation}
where we took into account that $(1+\delta/s)^{s/\delta}\leq e$. 
By choosing $\delta=s/(p-1)$, we get 
$\sup \norm{\partial \genf(u)}\leq3 c p s^{p-1}+(p-1) b/s$ and
\eqref{eq:LipGrowthp} follows since $1/s\leq1$.

\ref{prop:LipGrowthpi}:
Let $(u,v)\in B(\rho)^2$. 
It follows from \ref{prop:LipGrowthpii}  that $\partial \genf(u) \neq
\emp$ and $\partial \genf(v)\neq \emp$. Let  $u^*\in\partial \genf(u)$ and
$v^*\in\partial \genf(v)$. Then 
$\genf(v)-\genf(u)\geq\pair{v-u}{u^*}$ and $\genf(u)-\genf(v) \geq
\pair{u-v}{v^*}$.
Hence $\abs{\genf(u)-\genf(v)}\leq\max\{\norm{u^*},\norm{v^*}\}
\norm{u-v}$ and the statement follows by \ref{prop:LipGrowthpii}.
\end{proof}

\begin{proposition}
\label{rmk:subploss}
Let $\B$ be a real Banach space, let $\rho\in\RPP$, let 
$p\in\left]1,\pinf\right[$, let $b\in\RP$, let $c\in\RPP$, and set
$\genf=c \norm{\cdot}^p+b$. Then $\genf$ is Lipschitz continuous 
relative to $B(\rho)$ with constant $c p\rho^{\,p-1}$. 
\end{proposition}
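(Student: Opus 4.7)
The plan is to bypass the general estimate in Proposition~\ref{prop:LipGrowthp}\ref{prop:LipGrowthpi}, which would yield the coarser constant $3cp\max\{1,\rho^{p-1}\}+(p-1)b$, and to exploit instead the fact that $F-b$ depends on $u$ only through the scalar quantity $\norm{u}$. Since the additive constant $b$ cancels in any difference $F(u)-F(v)$, it suffices to show that, for every $(u,v)\in B(\rho)^2$,
\begin{equation}
\bigl\lvert \norm{u}^p-\norm{v}^p\bigr\rvert\leq p\,\rho^{\,p-1}\norm{u-v}.
\end{equation}

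The key ingredient is the elementary scalar inequality
\begin{equation}
(\forall (s,t)\in\RP^2)\quad \lvert s^p-t^p\rvert\leq p\,\max\{s,t\}^{p-1}\lvert s-t\rvert,
\end{equation}
which follows (assuming without loss of generality $s\geq t$) by writing $s^p-t^p=\int_t^s p\,r^{p-1}\,\ud r$ and bounding the integrand by its maximum value $p\,s^{p-1}$.

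Applying this with $s=\norm{u}$ and $t=\norm{v}$, and then invoking the reverse triangle inequality $\bigl\lvert\norm{u}-\norm{v}\bigr\rvert\leq\norm{u-v}$, yields
\begin{equation}
\bigl\lvert\norm{u}^p-\norm{v}^p\bigr\rvert\leq p\,\max\{\norm{u},\norm{v}\}^{p-1}\norm{u-v}\leq p\,\rho^{\,p-1}\norm{u-v},
\end{equation}
where the last step uses $(u,v)\in B(\rho)^2$. Multiplying by $c$ gives $\lvert F(u)-F(v)\rvert\leq c p\rho^{\,p-1}\norm{u-v}$, as claimed. There is no real obstacle here; the only subtlety is to notice that the constant $b$ is irrelevant and that the sharp Lipschitz constant of $t\mapsto t^p$ on $[0,\rho]$ is $p\rho^{p-1}$, whence the improvement over the general bound of Proposition~\ref{prop:LipGrowthp}.
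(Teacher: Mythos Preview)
Your proof is correct and arrives at the same key intermediate inequality as the paper, namely
\[
\bigl\lvert\norm{u}^p-\norm{v}^p\bigr\rvert\leq p\,\max\{\norm{u},\norm{v}\}^{p-1}\norm{u-v},
\]
but you reach it by a slightly different route. You factor through the scalar map $t\mapsto t^p$, use the elementary bound $\lvert s^p-t^p\rvert\leq p\max\{s,t\}^{p-1}\lvert s-t\rvert$ obtained by integration, and then apply the reverse triangle inequality. The paper instead works directly in $\B$ via the $p$-duality map: for $u^*\in J_{\B,p}(u)$ the subgradient inequality for $\norm{\cdot}^p/p$ gives $\norm{u}^p-\norm{v}^p\leq p\pair{u-v}{u^*}\leq p\norm{u}^{p-1}\norm{u-v}$, and symmetrizing in $u,v$ yields the same bound. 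Your argument is more elementary and self-contained; the paper's is more in line with its convex-analytic machinery and, incidentally, isolates the inequality \eqref{eq:Lipforploss}, which is reused in Example~\ref{ex:distbased}. Either way the conclusion follows immediately once one restricts to $B(\rho)$.
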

\begin{proof}
Let $(u,v)\in\B^2$ and let $u^*\in J_{\B,p}(u)$. Then 
\eqref{e:JJJ} yields
$\norm{u}^p-\norm{v}^p\leq p\pair{u-v}{u^*}\leq p
\norm{u^*} \norm{v-u}=p \norm{u}^{p-1}
\norm{u-v}$. Swapping $u$ and $v$ yields
\begin{equation}
\label{eq:Lipforploss}
\big|\norm{u}^p-\norm{v}^p\big|\leq 
p\max\big\{\norm{u}^{p-1},\norm{v}^{p-1}\big\}\norm{u-v},
\end{equation}
and the claim follows.
\end{proof}

\subsection{Totally convex functions}
\label{subsec:totconv}

Let $\FF$ be a reflexive real Banach space and let
 $\genff\colon\FF\to\RX$ be a proper convex function.
Following \eqref{def:totconvmodi},
we denote by $\psi\colon \dom \genff \times \RR \to \RPX$
the modulus of total convexity of $\genff$ 
and, following \eqref{def:modtotconvbound}, for every $\rho \in
\RPP$ such that $B(\rho)\cap \dom \genff \neq \emp$, we denote
by $\psi_\rho\colon \RR \to \RPX$ the modulus of total 
convexity of $\genff$ on $B(\rho)$. 
$\genff$ is \emph{totally convex at $u\in\dom \genff$} if, 
for every $t\in\RPP$, $\psi(u,t)>0$.
Moreover, $\genff$ is \emph{totally convex
on bounded sets} if, for every $\rho\in\RPP$ such that
$B(\rho)\cap\dom \genff\neq\emp$, $\genff$ is totally convex on $B(\rho)$,
meaning that $\psi_{\rho}>0$ on $\RPP$.
Total convexity and standard variants of convexity are related as
follows:
\vspace{-2ex}
\begin{itemize}
\setlength{\itemsep}{1pt} 
\label{rZhd7rT9-01i}
\item
Suppose that $\genff$ is totally convex at every point of $\dom \genff$. 
Then $\genff$ is strictly convex.
\item
\label{rZhd7rT9-01ii}
Total convexity is closely related to uniform convexity 
\cite{Vlad78,Zalinescu83}. Indeed $\genff$ is
uniformly convex on $\FF$ if and only if, for every 
$t\in\RPP$, $\inf_{u\in\dom \genff} \psi(u,t)>0$ 
\cite[Theorem~3.5.10]{Zali02}. Alternatively, $\genff$ is 
uniformly convex on $\FF$ if and only if 
$(\forall t\in\RPP)$ $\inf_{\rho\in\RPP}\psi_\rho(t)>0$.
\item 
\label{rZhd7rT9-01iv}
In reflexive spaces, total convexity on bounded sets is
equivalent to uniform convexity on bounded sets 
\cite[Proposition~4.2]{ButIusZal03}. Yet, some results
will require pointwise total convexity, which
makes it the pertinent notion in our investigation.
\end{itemize}

\begin{remark}
\label{rmk:totconv}
Let $u_0$ and $u$ be in $\dom  \genff$. Then
\eqref{def:totconvmodi} implies that
\begin{equation}
\label{eq:totconvineq}
\genff(u)-\genff(u_0)\geq \genff^\prime(u_0;u-u_0)+\psi(u_0, \norm{u-u_0}).
\end{equation}
Moreover, if $u^*\in\partial  \genff(u_0)$, 
$\pair{u-u_0}{u^*}\leq  \genff^\prime(u_0; u-u_0)$ and
therefore
\begin{equation}
\label{eq:totconvineq2}
\genff(u)-\genff(u_0)\geq \pair{u-u_0}{u^*}+\psi(u_0,\norm{u-u_0}).
\end{equation}
Thus, $\partial \genff(u_0)\neq\emp$ $\Rightarrow$
$\psi(u_0,\norm{u-u_0})<+\infty$.
\end{remark}

The following proposition collects some properties of the classes
$\mathcal{A}_0$ and $\mathcal{A}_1$ introduced in \eqref{e:A0} and
\eqref{e:A1} that are used to study the modulus of total
convexity.

\begin{proposition}
\label{prop:psinatural}
Let $\phi\in\mathcal{A}_0$. Then the following hold:
\vspace{-2ex}
\begin{enumerate}
\setlength{\itemsep}{1pt} 
\item
\label{psi_dom}
$\dom\phi$ is an interval containing $0$.
\item
\label{psinat_dom}  
$\dom \phi^\natural=\left [ 0, \sup \phi(\RP) \right[$.
\item
\label{psinat_fulldom} 
Suppose that $\widehat{\phi}$ is increasing on $\RP$. Then 
$\dom\phi^\natural=\RP$ and $\phi$ is strictly increasing on 
$\dom\phi$.
\item
\label{psi_forcing} 
Suppose that $(t_n)_{n\in\NN}\in\RP^\NN$ satisfies 
$\phi(t_n)\to 0$. Then $t_n\to 0$.
\item
\label{psinat_increasing} 
$\phi^\natural$ is increasing on $\RP$ and $\lim_{s\to 0^+}
\phi^\natural(s)=0=\phi^\natural(0)$.
\item
\label{psi_psinat_eq} 
Let $(s,t)\in{\RP\times}\RPP$. Then $\phi^\natural(s)<t$
$\Leftrightarrow$ $s<\phi(t^-)$.
\item
\label{psinat_ina0} 
Suppose that $\phi\in \mathcal{A}_1$. Then 
$\mathrm{int}(\dom\phi)\neq\emp$, $\widehat{\phi}\in\mathcal{A}_0$, 
$\widehat{\phi}$ is right-continuous at $0$, and
$(\widehat{\phi})^\natural\in \mathcal{A}_0$.
\end{enumerate}
\end{proposition}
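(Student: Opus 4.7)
The plan is to verify each of the seven items in turn, using only the definitions \eqref{e:A0}, \eqref{e:A1}, \eqref{eq:hatnotation}, and \eqref{eq:psidag}, along with the observation recorded just after \eqref{eq:psidag} that $\phi(t)\leq s\Rightarrow t\leq \phi^\natural(s)$. The unifying tool throughout is the monotonicity of $\phi$ on $\RP$ built into membership in $\mathcal{A}_0$; almost every assertion reduces to inspecting $\{\tau\in\RP\,|\,\phi(\tau)\leq s\}$.

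Item \ref{psi_dom} is immediate: $0\in\dom\phi$ because $\phi(0)=0$, and if $t\in\dom\phi$ and $0\leq s\leq t$, then $\phi(s)\leq\phi(t)<\pinf$. For \ref{psinat_dom}, the inclusion $\dom\phi^\natural\supset [0,\sup\phi(\RP)[$ is obtained by choosing, for any $s<\sup\phi(\RP)$, some $t_0$ with $\phi(t_0)>s$; then monotonicity forces $\{\tau\,|\,\phi(\tau)\leq s\}\subset[0,t_0]$, so $\phi^\natural(s)\leq t_0$. Conversely, $s\geq\sup\phi(\RP)$ makes $\phi(\tau)\leq s$ for all $\tau\in\RP$, giving $\phi^\natural(s)=\pinf$. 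For \ref{psinat_increasing}, monotonicity of $\phi^\natural$ comes from the obvious set inclusion for $s_1\leq s_2$; $\phi^\natural(0)=0$ follows from $\phi>0$ on $\RPP$; and the right-limit at $0$ is obtained by contradiction: a sequence $s_n\downarrow 0$ with $\phi^\natural(s_n)\geq\epsilon>0$ would furnish $t_n\in[\epsilon/2,\phi^\natural(s_n)]$ with $\phi(t_n)\leq s_n$, yielding $0<\phi(\epsilon/2)\leq\phi(t_n)\to 0$. Item \ref{psi_forcing} is the same contradiction argument applied directly to $(t_n)$.

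For \ref{psinat_fulldom}, suppose $\widehat{\phi}$ is increasing. Strict monotonicity of $\phi$ on $\dom\phi$ follows by taking $0<s<t\in\dom\phi$ and writing $\phi(s)=s\,\widehat{\phi}(s)\leq s\,\widehat{\phi}(t)=(s/t)\phi(t)<\phi(t)$; the case $s=0$ is trivial. To obtain $\dom\phi^\natural=\RP$ via \ref{psinat_dom}, I show $\sup\phi(\RP)=\pinf$: if $\dom\phi$ is bounded then $\phi\equiv\pinf$ beyond it, and otherwise $\widehat{\phi}(t)\geq\widehat{\phi}(1)>0$ for $t\geq 1$, giving $\phi(t)\geq t\,\widehat{\phi}(1)\to\pinf$. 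Item \ref{psi_psinat_eq} is the most delicate, being the place where left-limits interact with a supremum that need not be attained. I would argue: if $\phi^\natural(s)<t$, pick any $\tau\in(\phi^\natural(s),t)$; the strict inequality $\tau>\phi^\natural(s)$ forces $\phi(\tau)>s$, so $\phi(t^-)=\sup_{\tau<t}\phi(\tau)>s$. Conversely, if $s<\phi(t^-)$, there is $\tau<t$ with $\phi(\tau)>s$, and monotonicity confines $\{\tau'\,|\,\phi(\tau')\leq s\}$ to $[0,\tau]$, hence $\phi^\natural(s)\leq\tau<t$.

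Finally, for \ref{psinat_ina0}, the conditions defining $\mathcal{A}_1$ give directly that $\widehat{\phi}$ is increasing on $\RP$, equals $0$ at $0$, is positive on $\RPP$ (since so is $\phi$), and is right-continuous at $0$; hence $\widehat{\phi}\in\mathcal{A}_0$. Non-emptiness of $\inte(\dom\phi)$ is ensured because $\widehat{\phi}(t)\to 0$ as $t\to 0^+$ requires some $t_0>0$ with $\widehat{\phi}(t_0)<\pinf$, whereupon \ref{psi_dom} gives $[0,t_0]\subset\dom\phi$. Applying \ref{psinat_increasing} to $\widehat{\phi}$ yields the monotonicity and vanishing at $0$ of $(\widehat{\phi})^\natural$; positivity on $\RPP$ follows because, for $s>0$, right-continuity at $0$ produces some $t>0$ with $\widehat{\phi}(t)\leq s$, so $(\widehat{\phi})^\natural(s)\geq t>0$. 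The only point that genuinely requires thought is \ref{psi_psinat_eq}; the remaining items are routine bookkeeping with the definitions.
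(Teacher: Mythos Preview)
Your proof is correct and follows essentially the same path as the paper's. The one notable difference is item~\ref{psinat_increasing}: the paper simply cites \cite[Lemma~3.3.1(i)]{Zali02}, whereas you supply a short direct argument via the set inclusion and the contradiction with $\phi(\epsilon/2)>0$, which is a modest improvement in self-containedness.
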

\begin{proof}
\ref{psi_dom}: This follows from \eqref{e:A0}.

\ref{psinat_dom}: For every $s\in\RP$, $[\phi\leq s] \subset
\dom \phi$.  Therefore, if $\dom\phi$ is bounded, $\phi^\natural$
is real-valued. Now, suppose that $\dom\phi=\RP$. Let $s\in\RP$ with
$s<\sup \phi(\RP)$. Then there exists $t_1\in\RP$ such 
that $s<\phi(t_1)$. Moreover, since $\phi$ is
increasing, $t\in [\phi\leq s] \Rightarrow\phi(t)\leq s<
\phi(t_1) \Rightarrow t\leq t_1$. Hence,
$\phi^\natural(s)=\sup [\phi\leq s]\leq t_1 <+\infty$. 
Therefore
$\left [0, \sup \phi(\RP)\right[ \subset \dom\, \phi^\natural$.
On the other hand, if $s\in\left[\sup \phi(\RP),\pinf\right[$,
then $[\phi\leq s]=\dom \phi$ and hence
$\phi^\natural(s)=+\infty$.

\ref{psinat_fulldom}: 
For every $t\in[1,\pinf[$, $\phi(t)\geq t\phi(1)>0$. Hence 
$\sup\phi(\RP)=\pinf$ and therefore \ref{psinat_dom} yields
$\dom\phi^\natural=\RP$. Let $t\in \dom \phi$ and $s\in \dom \phi$
with $t<s$. If $t>0$, then $0<\phi(t)=t \widehat{\phi}(t)
\leq t \widehat{\phi}(s)=(t/s)\phi(s)<\phi(s)$; 
otherwise, \eqref{e:A0} yields $\phi(t)=\phi(0)=0<\phi(s)$.

\ref{psi_forcing}: Suppose that there
exist $\varepsilon\in\RPP$ and a subsequence 
$(t_{k_n})_{n\in\NN}$ such that $(\forall n\in\NN)$
$t_{k_n}\geq \varepsilon$.
Then $\phi (t_{k_n})\geq\phi( \varepsilon)>0$ and hence
$\phi(t_n)\not\to 0$. 

\ref{psinat_increasing}: 
See \cite[Lemma~3.3.1(i)]{Zali02}.

\ref{psi_psinat_eq}: Suppose that $t\leq \phi^\natural(s)$. Then
for every $\delta\in\left]0,t\right[$ there exists 
$t^\prime\in\RP$ such that
$\phi(t^\prime)\leq s$ and  $t-\delta<t^\prime$, hence
$\phi(t-\delta)\leq \phi(t^\prime)\leq s$. Therefore $0<
\sup_{\delta\in\left]0,t\right[} \phi(t-\delta)=
\phi(t^-)\leq s$. Conversely, suppose that $t>\phi^\natural(s)$. Let
$t^\prime\in\left]\phi^\natural(s),t\right[$. Then 
\eqref{eq:psidag} gives $\phi(t^\prime)>s$,
and hence $\phi(t^-)>s$.

\ref{psinat_ina0}: 
By \eqref{e:A0} and \eqref{e:A1},
$\mathrm{int}(\dom\phi)\neq\emp$, $\widehat{\phi}\in\mathcal{A}_0$,
and $\widehat{\phi}$ is continuous at $0$. Let $s\in\RPP$. In view
of \ref{psinat_increasing}, to prove that
$(\widehat{\phi})^\natural\in \mathcal{A}_0$, it remains to show
that $(\widehat{\phi})^\natural(s)>0$. By continuity of
$\widehat{\phi}$ at $0$, $\menge{t\in\RP}{\widehat{\phi}(t)\leq s}$
is a neighborhood of $0$ and hence $(\widehat{\phi})^\natural(s)=
\sup\menge{t\in\RP}{\widehat{\phi}(t)\leq s}>0$.
\end{proof}

The properties of the modulus of total convexity are summarized
below.

\begin{proposition}
\label{p:8}
Let $\FF$ be a reflexive real Banach space,
let $ \genff\colon\FF\to\RX$ be a proper convex function the domain of 
which is not a singleton, let $\psi$ be the modulus of total
convexity of $\genff$, and let $u_0\in\dom  \genff$. Then
the following hold:
\vspace{-2ex}
\begin{enumerate}
\setlength{\itemsep}{1pt} 
\item
\label{genincreasing} 
Let $c\in\left]1,\pinf\right[$ and let
$t\in\RP$. Then $\psi(u_0,ct)\geq c\psi(u_0,t)$.
\item
\label{increasing} 
$\psi(u_0,\cdot)\colon\RR\to\RPX$ is increasing on $\RP$.
\item
\label{phi_eqdef}
Let $t\in\RP$. Then
\begin{equation}
\psi(u_0,t)=
\inf\menge{\genff(u)-\genff(u_0)-\genff^\prime(u_0;u-u_0)}
{u\in\dom \genff,\:\norm{u-u_0}\geq t}.
\end{equation}
\item
\label{strictincreasing} 
Suppose that $\genff$ is totally
convex at $u_0$. Then $\psi(u_0,\cdot)\in \mathcal{A}_0$ and
$\psi(u_0,\cdot)^{\!\widehat{\phantom{a}}}\in \mathcal{A}_0$.
\item
\label{phidomain} 
$\dom\psi(u_0,\cdot)$ is an interval containing $0$; moreover, 
if $\partial \genff(u_0)\neq\emp$, then
$\inte\dom\psi(u_0,\cdot)\neq\emp$.
\item
\label{phiinA1} 
Suppose that $\partial \genff(u_0)\neq\emp$. Then 
$\lim_{t\to 0^+}\psi(u_0,\cdot)^{\!\widehat{\phantom{a}}}(t)=0$. 
\item
\label{phiinA1'} 
Suppose that $\partial \genff(u_0)\neq\emp$ and that
$G$ is totally convex at $u_0$.
Then $\psi(u_0,\cdot)\in\mathcal{A}_1$.
\item
\label{p:8viii}
Let $\rho\in\RPP$ and suppose that $\genff$ is totally convex on 
$B(\rho)$. Then $\psi_\rho\in\mathcal{A}_0$ and 
$\widehat{\psi}_{\rho}\in\mathcal{A}_0$. Moreover,
if $B(\rho)\cap\dom\partial \genff\neq\emp$, 
then $\psi_{\rho}\in\mathcal{A}_1$.   
\item
\label{p:8ix}
Suppose that $u_0\in\Argmin_{\FF} \genff$ and that $\genff$ is totally
convex at $u_0$. Then $\genff$ is coercive.
\end{enumerate}
\end{proposition}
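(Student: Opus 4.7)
Plan: I would treat the nine items in the order they are listed, extracting most of the mileage from a single rescaling inequality and flagging (vi) as the main technical step. For (i), given $u \in \dom G$ with $\|u - u_0\| = ct$ for some $c > 1$, set $v = u_0 + c^{-1}(u - u_0)$. Then $v \in \dom G$ by convexity of $\dom G$, $\|v - u_0\| = t$, convexity of $G$ yields $G(v) - G(u_0) \leq c^{-1}(G(u) - G(u_0))$, and positive homogeneity of $G'(u_0;\cdot)$ yields $G'(u_0; v - u_0) = c^{-1} G'(u_0; u - u_0)$. Subtracting gives $G(v) - G(u_0) - G'(u_0; v - u_0) \leq c^{-1}[G(u) - G(u_0) - G'(u_0; u - u_0)]$, and infimizing over admissible $u$ proves $\psi(u_0, ct) \geq c \psi(u_0, t)$. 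Items (ii), (iii), and the increasingness of $\widehat{\psi}(u_0, \cdot)$ claimed in (iv) all follow from this inequality together with the elementary $\psi(u_0, 0) = 0$ and $\psi(u_0, \cdot) \geq 0$ (the latter coming from $G(u) - G(u_0) \geq G'(u_0; u - u_0)$). The rest of (iv) is just unpacking total convexity.

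For (v), the interval structure of $\dom \psi(u_0, \cdot)$ is immediate from (ii); to place $0$ in the interior when $\partial G(u_0) \neq \emp$, pick $v \in \dom G \smallsetminus \{u_0\}$ (available since $\dom G$ is not a singleton) and $u_0^* \in \partial G(u_0)$, and check that $\psi(u_0, \|v - u_0\|/2)$ is finite because $G'(u_0; v - u_0) \geq \pair{v - u_0}{u_0^*}$ is finite. Item (vi) is the main obstacle: fix $v \in \dom G$ with $v \neq u_0$, set $\eta = v - u_0$, and for $t \in (0, \|\eta\|]$ consider $v_t = u_0 + (t/\|\eta\|)\eta \in \dom G$. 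Using $\psi(u_0, t) \leq G(v_t) - G(u_0) - G'(u_0; v_t - u_0)$ together with positive homogeneity of $G'(u_0;\cdot)$, one rewrites $\widehat{\psi}(u_0, t) \leq \|\eta\|^{-1}[(G(u_0 + s\eta) - G(u_0))/s - G'(u_0; \eta)]$ with $s = t/\|\eta\|$; as $t \to 0^+$, the bracket tends to $0$ by the very definition of the directional derivative (finite because $\partial G(u_0) \neq \emp$), proving (vi). Item (vii) is then (iv) plus (vi) against the definition of $\mathcal{A}_1$.

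For (viii), note that $\widehat{\psi}_\rho(t) = \inf_{u \in B(\rho) \cap \dom G} \widehat{\psi}(u, t)$ for $t > 0$, so monotonicity and positivity transfer from each summand to the infimum; this and $\widehat{\psi}_\rho(0) = 0$ place $\psi_\rho$ and $\widehat{\psi}_\rho$ in $\mathcal{A}_0$, the positivity on $\RPP$ being precisely total convexity on $B(\rho)$. When $B(\rho) \cap \dom \partial G \neq \emp$, fix $u_0$ in that intersection and combine (vi) with the domination $\widehat{\psi}_\rho(t) \leq \widehat{\psi}(u_0, t)$ to conclude $\widehat{\psi}_\rho(t) \to 0$, hence $\psi_\rho \in \mathcal{A}_1$. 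For (ix), Fermat's rule yields $0 \in \partial G(u_0)$, so $G'(u_0; u - u_0) \geq 0$ and the defining inequality gives $G(u) \geq G(u_0) + \psi(u_0, \|u - u_0\|)$ on $\dom G$. If $\dom G$ is bounded, coercivity is immediate; otherwise convexity of $\dom G$ forces $\psi(u_0, t) < +\infty$ for every $t > 0$, total convexity gives $\psi(u_0, t_0) > 0$ for some $t_0 > 0$, and (i) amplifies this into $\psi(u_0, t) \geq (t/t_0)\psi(u_0, t_0) \to +\infty$ as $t \to +\infty$, completing the proof.
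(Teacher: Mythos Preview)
Your proposal is correct and follows essentially the same route as the paper: the rescaling trick $v = u_0 + c^{-1}(u-u_0)$ for (i), deriving (ii)--(iv) from it, using a point of $\dom G \smallsetminus \{u_0\}$ together with the subgradient inequality for (v), and amplifying $\psi(u_0,t_0) > 0$ via (i) for coercivity in (ix). The only notable stylistic difference is in (vi): the paper argues by contradiction (assuming $\inf_{t>0}\widehat{\psi}(u_0,t) \geq \epsilon > 0$ and reaching $\epsilon\|v\| + G'(u_0;v) \leq G'(u_0;v)$), whereas you give the direct difference-quotient estimate $\widehat{\psi}(u_0,t) \leq \|\eta\|^{-1}\big[(G(u_0+s\eta)-G(u_0))/s - G'(u_0;\eta)\big] \to 0$; both rest on the same idea that the bracket converges to the directional derivative, so this is a presentation choice rather than a different argument. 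One minor wording slip: in (v) you say you ``place $0$ in the interior'' of $\dom\psi(u_0,\cdot)$, but since the domain lies in $\RP$, $0$ is its left endpoint; what you actually (and correctly) show is that the domain contains a nontrivial interval $[0,t_1]$, hence has nonempty interior.
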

\begin{proof}
\ref{genincreasing}:
Suppose that $u\in\dom \genff$ satisfies $\norm{u-u_0}=ct$
and set $v=(1-c^{-1})u_0+c^{-1} u=u_0+c^{-1}(u-u_0)$. Then
$v\in\dom \genff$ and $\norm{v-u_0}=t$.
Therefore, since $\genff$ is convex and 
$\genff^\prime(u_0; \cdot)$ is positively homogeneous 
\cite[Proposition~17.2]{Livre1},
\begin{align*}
\psi(u_0, t) &\leq  \genff(v)-\genff(u_0)-\genff^\prime(u_0; v-u_0) \\
&\leq (1-c^{-1}) \genff(u_0)+c^{-1} \genff(u)-\genff(u_0)-c^{-1}
\genff^\prime(u_0; u-u_0)\\
&=c^{-1} \big(\genff(u)-\genff(u_0)-\genff^\prime(u_0; u-u_0) \big)\,.
\end{align*}
Hence $c\psi(u_0,t)\leq\psi(u_0,ct)$.

\ref{increasing}: Let $(s,t)\in\RPP^2$ be such that $t<s$, and 
set $c=s/t$. Then using \ref{genincreasing}, we have 
$\psi(u_0, t)\leq c^{-1} \psi(u_0, c t)\leq \psi(u_0, s)$.

\ref{phi_eqdef}: 
Suppose that $u\in\dom \genff$ satisfies $\norm{u-u_0}\geq t$ and 
set $s=\norm{u-u_0}$. Then by \ref{increasing} we have
$\psi(u_0, t)\leq\psi(u_0, s)\leq \genff(u)-\genff(u_0)-\genff^\prime(u_0;u-u_0)$.

\ref{strictincreasing}: Since $\psi(u_0,0)=0$, \ref{increasing}
yields $\psi(u_0,\cdot)\in\mathcal{A}_0$. Moreover, it follows
from  \ref{genincreasing} that
$\psi(u_0,\cdot)^{\!\widehat{\phantom{a}}}$ is increasing, hence 
$\psi(u_0,\cdot)^{\!\widehat{\phantom{a}}}\in\mathcal{A}_0$.

\ref{phidomain}: The first claim follows from the fact that
$\psi(u_0,\cdot)$ is increasing and $\psi(u_0,0)=0$. Next, since
$\dom \genff$ is not a singleton, there exists $u\in\dom \genff, u \neq
u_0$. Finally, Remark~\ref{rmk:totconv} asserts that 
$\partial \genff(u_0)\neq\emp$ $\Rightarrow$ 
$\psi(u_0,\norm{u-u_0})<+\infty$.

\ref{phiinA1}: Since \ref{genincreasing} asserts that 
$\psi(u_0,\cdot)^{\!\widehat{\phantom{a}}}$ is increasing, 
$\lim_{t\to 0^+}\psi(u_0,\cdot)^{\!\widehat{\phantom{a}}}(t)=
\inf_{t\in\RPP}\psi(u_0,\cdot)^{\!\widehat{\phantom{a}}}(t)$. 
Suppose that
$\inf_{t\in\RPP}\psi(u_0,\cdot)^{\!\widehat{\phantom{a}}}(t)>0$. 
Then there exists $\epsilon\in\RPP$ such that, for every 
$t\in\RPP$, $\psi(u_0,t)\geq\epsilon t$. 
Let $u\in\dom \genff\smallsetminus\{u_0\}$. 
For every $t\in\left]0,1\right]$, define $u_t=
u_0+t v$, where $v=u-u_0$. Then $\epsilon t \norm{v}
=\epsilon \norm{u_t-u_0}\leq\psi(u_0,\norm{u_t-u_0})\leq
\genff(u_0+t v)-\genff(u_0)-\genff^\prime(u_0;tv)$.
Hence, since $\genff^\prime(u_0; \cdot)$ is positively homogeneous, 
$\epsilon \norm{v}+ \genff^\prime(u_0; v)\leq (\genff(u_0+t v)-
\genff(u_0))/t$. Letting $t \to 0^+$ yields
$\epsilon\norm{v}+\genff^\prime(u_0;v)\leq \genff^\prime(u_0; v)$,
which contradicts the facts that 
$\genff^\prime(u_0;v)\in\RR$ and $\epsilon\norm{v}>0$. 

\ref{phiinA1'}--\ref{p:8viii}:
The claims follow from \ref{strictincreasing} and 
\ref{phiinA1}.

\ref{p:8ix}:
Since $0\in \partial \genff(u_0)$, \eqref{eq:totconvineq2} yields
$(\forall u\in\dom \genff)$ $\psi(u_0,\norm{u-u_0})\leq \genff(u)-\genff(u_0)$.
On the other hand, since $\genff$ is also totally convex at $u_0$,
\ref{strictincreasing}-\ref{phidomain}
imply that there exists $s\in\RPP$ such that $0<\psi(u_0,s)<+\infty$
and $(\forall t\in[s,\pinf[)$ $\psi(u_0, t)\geq t\psi(u_0, s)/s$. 
Therefore, for every $u\in\dom \genff$ such that
$\norm{u-u_0}\geq s$, we have $\genff(u)\geq
\genff(u_0)+\norm{u-u_0} \psi(u_0,s)/s$, which implies that $\genff$ is
coercive.
\end{proof}

\begin{remark}
Statements \ref{genincreasing}, \ref{increasing}, \ref{phi_eqdef},
and \ref{phidomain} are proved in \cite[Proposition~2.1]{ButRes06}
with the additional assumption that $\inte\dom \genff\neq\emp$,
and in \cite[Proposition~1.2.2]{ButIus2000} with the
additional assumption that $u_0$ is in the algebraic
interior of $\dom \genff$.
\end{remark}

\begin{example}
\label{ex:totconv}
Let $\FF$ be a uniformly convex real Banach space and let 
$\phi\in\mathcal{A}_0$ be real-valued, strictly increasing,
continuous, and such that $\lim_{t\to +\infty}\phi(t)=+\infty$.
Define $(\forall t\in\RR)$ $\varphi(t)=\int_0^{|t|}\phi(s)\ud s$.
Then \cite[Theorem~4.1(ii)]{Zalinescu83} and 
\cite[Proposition~4.2]{ButIusZal03} imply that 
$\genff=\varphi\circ\norm{\cdot}$ is totally convex on bounded sets 
(see also \cite[Theorem~6]{Vlad78}).
\end{example}

We now provide an example of computation of the 
modulus of total convexity on balls.

\begin{proposition}
\label{prop:modnormr}
Let $q\in [2,+\infty[$ and let $\FF$ be a uniformly convex real
Banach space with modulus of convexity of power type $q$. Let
$r\in\left]1, +\infty\right[$ and for every $\rho\in\RP$, denote by
$\psi_\rho$
the modulus of total convexity of $\norm{\cdot}^r$ on the ball
$B(\rho)$. Then there exists $\beta\in\RPP$ such that
\begin{equation}
\label{eq:ucoJrho}
(\forall\rho \in\RP)(\forall t\in\RP)\qquad \psi_\rho(t)\geq 
\begin{cases}
\beta t^r & \text{if}\;\; r\geq q \\
\dfrac{\beta{t}^{q}}{(\rho+t)^{q-r}} & \text{if}\;\;r<q.
\end{cases}
\end{equation}
Hence $\norm{\cdot}^r$ is totally convex on bounded sets and, 
if $r\geq q$, it is uniformly convex. Moreover, for every
$\rho\in\RP$ and every $s\in\RP$,
\begin{equation}
\label{eq:invpsi}
(\widehat{\psi}_\rho)^{\natural}(s)\leq
\begin{cases}
\bigg(\dfrac{s}{\beta}\bigg)^{1/(r-1)} & 
\text{if }  r\geq q \\[2ex]
2^{q} \rho \max\bigg\{ \bigg( \dfrac{s}{\beta \rho^{r-1}}
\bigg)^{1/(q-1)}, \bigg( \dfrac{s}{\beta \rho^{r-1}} \bigg)^{1/(r
-1)} \bigg\}& \text{if } r<q.
 \end{cases}
\end{equation}
\end{proposition}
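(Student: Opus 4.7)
The plan is to derive lower bounds on $\psi_\rho$ from a Bregman-divergence form of the hypothesis on the modulus of convexity (Xu's inequality) together with scalar Bregman estimates, and then to invert these bounds via Proposition~\ref{prop:psinatural}\ref{psi_psinat_eq} to obtain the stated upper bounds on $(\widehat{\psi}_\rho)^\natural$.

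I would begin by invoking Xu's inequality: since $\FF$ has modulus of convexity of power type $q$, there exists $\kappa\in\RPP$ such that, for every $(u,v)\in\FF^2$ and every $u^*\in J_{\FF,q}(u)$,
\[
\norm{v}^q - \norm{u}^q - q\pair{v-u}{u^*} \geq \kappa\,\norm{v-u}^q.
\]
Specialising to $r=q$ at once yields $\psi_\rho(t)\geq \kappa t^q$ uniformly in $\rho$, which settles that regime.

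For general $r$ and $u\neq 0$, the relation $\norm{u}^{r-q}u^*\in J_{\FF,r}(u)$ leads to the algebraic decomposition
\[
\norm{v}^r - \norm{u}^r - r\,\norm{u}^{r-q}\pair{v-u}{u^*} \;=\; \Phi_\alpha(\norm{v}^q,\norm{u}^q) + \alpha\,\norm{u}^{r-q}\bigl(\norm{v}^q-\norm{u}^q-q\pair{v-u}{u^*}\bigr),
\]
where $\alpha:=r/q$ and $\Phi_\alpha(a,b):=a^\alpha-b^\alpha-\alpha b^{\alpha-1}(a-b)$ is the scalar Bregman divergence of $s\mapsto s^\alpha$. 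Xu's inequality bounds the second bracket below by $\kappa t^q$, uniformly in $u^*$. When $r\geq q$, $\Phi_\alpha\geq 0$ by convexity of $s\mapsto s^\alpha$; a case analysis according to whether $\norm{u}\leq t/2$ (where the positive contribution $\Phi_\alpha((t-\norm{u})^q,\norm{u}^q)$ dominates, with the limit $\Phi_\alpha(t^q,0)=t^r$ covering $u=0$) or $\norm{u}> t/2$ (where $\alpha\norm{u}^{r-q}\kappa t^q\geq \alpha(t/2)^{r-q}\kappa t^q$ dominates) produces the uniform bound $\psi_\rho(t)\geq\beta t^r$. When $r<q$, $\Phi_\alpha$ can be negative, but the concavity estimate $\Phi_\alpha(a,b)\geq -\tfrac{\alpha(1-\alpha)}{2}(a-b)^2/\min(a,b)^{2-\alpha}$, together with the mean-value bound $|\norm{v}^q-\norm{u}^q|\leq q(\rho+t)^{q-1}t$, allows this negative contribution to be absorbed by the diverging weight $\alpha\norm{u}^{r-q}\kappa t^q$ (recall $r-q<0$); optimising over $\norm{v}$ within $[|\norm{u}-t|,\norm{u}+t]$ then produces the bound $\psi_\rho(t)\geq\beta t^q/(\rho+t)^{q-r}$.

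With $\psi_\rho$ controlled, \eqref{eq:invpsi} follows by direct inversion. In the case $r\geq q$, $\widehat{\psi}_\rho(t)\geq \beta t^{r-1}$ immediately gives $(\widehat{\psi}_\rho)^\natural(s)\leq (s/\beta)^{1/(r-1)}$ by Proposition~\ref{prop:psinatural}\ref{psi_psinat_eq}; in the case $r<q$, one inverts $\widehat{\psi}_\rho(t)\geq \beta t^{q-1}/(\rho+t)^{q-r}$ separately on the regimes $t\leq \rho$ (where $\rho+t\asymp\rho$, producing the exponent $1/(q-1)$ with the scaling $s/(\beta\rho^{r-1})$) and $t\geq\rho$ (where $\rho+t\asymp t$, producing the exponent $1/(r-1)$), which accounts for the maximum and the factor $2^q\rho$. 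The main obstacle I anticipate is the case $r<q$ as $\norm{u}\to 0^+$: both the positive weight $\alpha\norm{u}^{r-q}\kappa t^q$ and the negative $\Phi_\alpha$ term diverge (the latter through the $\min(\cdot)^{\alpha-2}$ denominator), so a careful choice of $\norm{v}$ in the feasible interval is required to keep the combined expression bounded below by $\beta t^q/(\rho+t)^{q-r}$ uniformly in $\norm{u}\in[0,\rho]$.
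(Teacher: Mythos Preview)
Your decomposition via the scalar Bregman $\Phi_\alpha$ and Xu's inequality for the $q$-power is correct as an identity, and the case $r\geq q$ goes through: once you note that both $\Phi_\alpha\geq 0$ and the weighted $q$-Bregman term are nonnegative, a compactness argument on the homogenised variable $\sigma=\norm{u}/t\in[0,1/2]$ (together with the observation that $\Phi_\alpha(\norm{v}^q,\norm{u}^q)$ is minimised in $\norm{v}$ at $\norm{v}=t-\norm{u}$ on that range) gives a positive constant times $t^r$. The inversion step for \eqref{eq:invpsi} is also fine.

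The gap is in the case $r<q$, precisely in the regime you flag. After you replace the $q$-Bregman bracket by its uniform lower bound $\kappa t^q$, the remaining expression at the endpoint $\norm{v}=\norm{u}+t$ is
\[
(\norm{u}+t)^r-(1-\alpha)\norm{u}^r-\alpha\norm{u}^{r-q}\bigl[(\norm{u}+t)^q-\kappa t^q\bigr],
\]
and for $\norm{u}\to 0^+$ with $\kappa<1$ (which is the generic Banach situation) the bracket tends to $(1-\kappa)t^q>0$ while $\norm{u}^{r-q}\to+\infty$, so the whole thing goes to $-\infty$. Your Taylor estimate on $\Phi_\alpha$ does not help here: the negative contribution scales like $\norm{u}^{r-2q}$, which diverges faster than the positive $\norm{u}^{r-q}$ weight. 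The suggestion to resolve this by a ``careful choice of $\norm{v}$'' misreads the problem: you need the infimum over all $v$ with $\norm{v-u}=t$, so the bad endpoint must be handled, not avoided. The issue is that bounding the $q$-Bregman by $\kappa t^q$ discards the exact cancellation that occurs in the collinear direction; keeping only $\kappa t^q$ cannot recover it.

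The paper avoids this entirely by invoking the Xu--Roach integral inequality \cite{XuRo1991} directly for $\norm{\cdot}^r$: one has
\[
\norm{u+v}^r-\norm{u}^r-r\pair{v}{u^*}\;\geq\; rK_r\int_0^1 \frac{\max\{\norm{u+\tau v},\norm{u}\}^r}{\tau}\,\delta_\FF\!\Bigl(\frac{\tau\norm{v}}{2\max\{\norm{u+\tau v},\norm{u}\}}\Bigr)\,\mathrm{d}\tau,
\]
and after inserting $\delta_\FF(\varepsilon)\geq c\varepsilon^q$ one simply bounds $\max\{\norm{u+\tau v},\norm{u}\}^{r-q}$ above by $(\norm{u}+\norm{v})^{r-q}$ when $r<q$ (and below by $(\tau\norm{v}/2)^{r-q}$ when $r\geq q$). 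This produces $\beta t^q/(\norm{u}+t)^{q-r}$ uniformly in $\norm{u}$, with no separate treatment of small $\norm{u}$ needed. If you want to rescue your approach, you would need an additional direct estimate of the $r$-Bregman for $\norm{u}\leq t/C$ (e.g.\ via $\norm{v}^r-\norm{u}^r-r\norm{u}^{r-1}t\geq (t-\norm{u})^r-\norm{u}^r-r\norm{u}^{r-1}t$), glued to your decomposition argument on $\norm{u}\geq t/C$; the paper's route is considerably cleaner.
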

\begin{proof}
Let $(u,v)\in\FF^2$. We derive from \cite[Theorem~1]{XuRo1991} that
\begin{equation}
(\forall u^*\in J_{\FF,r}(u))\quad
\norm{u+v}^r-\norm{u}^r\geq r
\pair{v}{u^*}+\vartheta_r(u,v),
\end{equation}
where 
\begin{equation*}
\vartheta_r(u,v)=r K_r \int_0^1 \frac{\max\{\norm{u+t v},
\norm{u}\}^r}{t} \delta_{\FF}\bigg( \frac{t \norm{v}}{ 2
\max\{\norm{u+t v}, \norm{u}\}} \bigg) \ud t
\end{equation*}
and $K_r\in\RPP$ is the constant defined according to
\cite[Lemma~3, Equation (2.13)]{XuRo1991}. Since
$\delta_\FF(\varepsilon)\geq c \varepsilon^q$ for some $c\in\RPP$,
then
\begin{equation}
\vartheta_r(u,v)\geq \frac{r K_r c}{2^q} \norm{v}^q\int_0^1
\max\{\norm{u+t v}, \norm{u}\}^{r-q} t^{q-1}  \ud t.
\end{equation}
Suppose first that $r\geq q$. 
Since, $\forall\, t\in[0,1]$, 
$\max\{\norm{u+t v}, \norm{u}\}\geq  t \norm{v}/2$, 
\begin{equation}
\label{e:8o376132}
\vartheta_r(u,v)\geq \frac{r K_r c}{2^q} \norm{v}^q\int_0^1
\frac{t^{r-q}}{2^{r-q}} \norm{v}^{r-q} t^{q-1}  \ud t=\frac{r
K_r c}{2^r} \norm{v}^r\int_0^1  t^{r-1}  \ud t=\frac{K_r
c}{2^r} \norm{v}^r.
\end{equation}
Now, suppose that $r<q$. Then since, for every $t\in[0,1]$,  
$\max\{\norm{u+t v}, \norm{u}\}
\leq \norm{u}+\norm{v}$,
\begin{equation}
\label{e:8o8b9k32}
\vartheta_r(u,v)\geq \frac{r K_r c}{2^q} \norm{v}^q\int_0^1
\frac{1}{\max\{\norm{u+t v}, \norm{v}\}^{q-r}} t^{q-1}  \ud t
\geq \frac{r K_r c}{ q 2^q}
\frac{\norm{v}^q}{(\norm{u}+\norm{v})^{q-r}}.
\end{equation}
Let  $\psi$ be the modulus of total convexity of $\norm{\cdot}^r$. 
Then it follows from \eqref{e:8o376132} and \eqref{e:8o8b9k32} 
that
\begin{equation}\label{eq:ucoJ}
(\forall\, u\in\FF)(\forall\, t\in\RP)\quad\psi(u,t)\geq 
\begin{cases}
\dfrac{K_r c}{2^r} t^r & \text{if } q\leq r \\[2ex]
\dfrac{r}{q} \dfrac{K_r c}{2^{q}} 
\dfrac{{t}^{q}}{(\norm{u}+{t})^{q-r}} & \text{if } q>r.
\end{cases}
\end{equation}
Let $\rho\in\RPP$ and set
$\beta=(r/\max\{q,r\})K_rc/2^{\max\{q,r\}}$.
Then we obtain \eqref{eq:ucoJrho} by 
taking the infimum over $u\in B(\rho)$ in \eqref{eq:ucoJ}.
Thus, if $r\geq q$, the modulus of total convexity is
independent from $\rho$, and hence $\norm{\cdot}^r$ is uniformly
convex on $\FF$. On the other hand, if $r<q$, we deduce
that $\norm{\cdot}^r$ is totally convex on bounded sets. 
Hence, 
\begin{equation}
\label{eq:hatpsi}
(\forall\, t\in\RP)\quad\widehat{\psi}_\rho(t)\geq \begin{cases}
\beta t^{r-1} & \text{if } r\geq q \\
\dfrac{{\beta t}^{q-1}}{(\rho+{t})^{q-r}} & \text{if } r<q.
\end{cases}
\end{equation}
A simple calculation shows that, if $r<q$,
\begin{equation}
\label{eq:hatpsipr}
(\forall\, t\in\RP)\quad\widehat{\psi}_\rho(t)\geq \nu_\rho(t),\quad
\text{where}\;\: \nu_\rho(t)=\dfrac{\beta\rho^{r-1}}{2^{q}} 
\min\big\{ (t/\rho)^{q-1}, (t/\rho)^{r-1}\big\}.
\end{equation}
The function $\nu_\rho$ is strictly increasing and continuous on 
$\RP$, thus $\nu_\rho^\natural=\nu_\rho^{-1}$. 
Since for arbitrary functions $\psi_1 \colon \RP \to \RP$ and
$\psi_2 \colon \RP \to \RP$ we have $\psi_1\geq \psi_2 \Rightarrow
\psi_1^\natural\leq \psi_2^\natural$, we obtain \eqref{eq:invpsi}.
\end{proof}

\begin{remark}\
\label{rmk:100614}
\begin{enumerate}
\setlength{\itemsep}{1pt} 
\vspace{-2ex}
\item
\label{rmk:100614i}
An inspection of the proof of Proposition~\ref{prop:modnormr}
reveals that the constant $\beta$ is explicitly available in
terms of $r$ and of a constant depending on the space $\FF$. 
In particular, it follows from
\cite[Equation~(2.13)]{XuRo1991} that, when $r\in\left]1,2\right]$,
\begin{equation} 
K_r\geq4 (2+\sqrt{3}) \min\{ r(r-1)/2, (r-1) \log (3/2),
1-(2/3)^{r-1}\}>14(1-(2/3)^{r-1}),
\end{equation}
and when $r \in\left]2,+\infty\right[$
\begin{equation}
K_r \geq 4(2+\sqrt{3}) \min\{ 1, (r-1) (2-\sqrt{3}),
1-(2/3)^{\frac{r}{2}}\}14(1-(2/3)^{\frac{r-1}{2}})\,.
\end{equation}
As an example, for the case $\FF=l^r(\KK)$ and
$\norm{\cdot}_r^r$, with $r\in\left]1,2\right]$, 
since $\FF$  has modulus of
convexity of power type $2$ with $c=(r-1)/8$ \cite{LindTzaf1979},
we have $\beta\geq(7/32)r(r-1)(1-(2/3)^{r-1})$.
\item
In \cite[Theorem~1]{Xu1991} and \cite[Lemma~2 p.~310]{Beauzamy1985}
the case $r=q$ is considered. It is proved
that $\norm{\cdot}^r_\FF$ is uniformly convex and that its modulus
of uniform convexity, say $\nu$, satisfies $\nu(t)\geq\beta t^r$,
for every $t\in\RPP$.
\end{enumerate}
\end{remark}

\subsection{Tikhonov-like regularization}\label{sec:varreg}

In this section we work with the following scenario.

\begin{assumption}
\label{SH}
$\FF$ is a reflexive real Banach space, $F\colon\FF\to\RX$ 
is bounded from below, $ G\colon\FF\to\RPX$, $\dom G$ is not a
singleton, and $\dom F\cap\dom  G\neq\emp$. The function
$\varepsilon\colon\RPP\to [0,1]$ satisfies
$\lim_{\lambda\to 0^+}\varepsilon(\lambda)=0$ and, for every 
$\lambda\in\RPP$,
$u_\lambda\in\Argmin_{\FF}^{\varepsilon(\lambda)}(F+\lambda G)$.
\end{assumption}

We study the behavior of the regularized problem
\begin{equation}
\label{genvarregprob}
\minimize{u\in\FF}{F(u)+\lambda G(u)}
\end{equation}
as $\lambda\to 0^+$ in connection with the limiting problem 
\begin{equation}
\minimize{u\in\FF}{F(u)}.
\end{equation}
We present results similar to those of \cite{Atto96}
under weaker assumptions and with approximate solutions of 
\eqref{genvarregprob}, as opposed to exact ones. 
In particular, Proposition~\ref{p:4} does not require the family
$(u_\lambda)_{\lambda\in\RPP}$ to be bounded or $F$ to have 
minimizers. Indeed, although these are common requirements in 
the inverse problems literature, where the convergence
of the minimizers $(u_\lambda)_{\lambda\in\RPP}$ is relevant, 
from the statistical learning
point of view this assumption is not always appropriate. In that
context, as discussed in the introduction, it is primarily the 
convergence of the values $(F(u_\lambda))_{\lambda\in\RPP}$ to 
$\inf F(\FF)$ which is of interest. On the other
hand, when $(u_\lambda)_{\lambda\in\RPP}$ is bounded and when 
additional convexity properties are imposed on $G$, we provide 
bounds and strong convergence results.

\begin{proposition}
\label{p:4} 
Suppose that Assumption~\ref{SH} holds. Then
the following hold:
\vspace{-2ex}
\begin{enumerate}
\setlength{\itemsep}{1pt} 
\item
\label{p:4ii} 
$\lim_{\lambda\to 0^+}\inf(F+\lambda  G)(\FF)=\inf F(\dom  G)$.
\item
\label{p:4i} 
$\lim_{\lambda\to 0^+} F(u_\lambda)=\inf F(\dom  G)$.
\item
\label{p:4iii} 
$\lim_{\lambda\to 0^+}\lambda  G(u_\lambda)=0$.
\end{enumerate}
\end{proposition}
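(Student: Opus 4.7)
The plan is to derive all three items from one chain of inequalities built around the definition of approximate minimization, the nonnegativity of $G$, and the lower-boundedness of $F$. Let me write $m_\lambda=\inf(F+\lambda G)(\FF)$ and $m_0=\inf F(\dom G)$. Note that $m_0>-\infty$ since $F$ is bounded below, and $m_0<+\infty$ by the hypothesis $\dom F\cap\dom G\neq\emp$. Also, since $F+\lambda G=+\infty$ outside $\dom G$ and $F\equiv +\infty$ outside $\dom F$, the effective infima coincide with those restricted to $\dom F\cap\dom G$.

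For \ref{p:4ii}, I would sandwich $m_\lambda$ between two bounds. The lower bound $m_\lambda\geq m_0$ follows from $G\geq 0$, which gives $F+\lambda G\geq F$, and from the observation that the infimum of $F+\lambda G$ is realized on $\dom G$. For the upper bound, for each fixed $u\in\dom F\cap\dom G$ one has $m_\lambda\leq F(u)+\lambda G(u)$, so $\varlimsup_{\lambda\to 0^+} m_\lambda\leq F(u)$; taking the infimum over such $u$ yields $\varlimsup_{\lambda\to 0^+} m_\lambda\leq m_0$.

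For \ref{p:4i}, the condition $u_\lambda\in\Argmin_{\FF}^{\varepsilon(\lambda)}(F+\lambda G)$ gives
\begin{equation*}
F(u_\lambda)+\lambda G(u_\lambda)\leq m_\lambda+\varepsilon(\lambda).
\end{equation*}
Finiteness of the right-hand side forces $u_\lambda\in\dom G$, hence $F(u_\lambda)\geq m_0$; conversely, the nonnegativity of $\lambda G(u_\lambda)$ yields $F(u_\lambda)\leq m_\lambda+\varepsilon(\lambda)$. Letting $\lambda\to 0^+$ and invoking \ref{p:4ii} together with $\varepsilon(\lambda)\to 0$ gives $F(u_\lambda)\to m_0$. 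Item \ref{p:4iii} then follows by rearranging the same inequality: $0\leq\lambda G(u_\lambda)\leq m_\lambda-F(u_\lambda)+\varepsilon(\lambda)$, and the right-hand side vanishes by \ref{p:4ii}, \ref{p:4i}, and $\varepsilon(\lambda)\to 0$.

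The only genuine obstacle is bookkeeping with the $\RX$- and $\RPX$-valued functions, namely confirming $u_\lambda\in\dom G$ so that $F(u_\lambda)\geq m_0$ is meaningful, and verifying that $m_0$ is finite; both points are immediate from Assumption~\ref{SH}. No compactness or weak closedness is needed, which is exactly why approximate minimization and unbounded $(u_\lambda)_{\lambda\in\RPP}$ pose no difficulty here.
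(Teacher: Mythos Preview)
Your proof is correct and follows essentially the same approach as the paper: both rest on the sandwich $\inf F(\dom G)\leq F(u_\lambda)\leq F(u_\lambda)+\lambda G(u_\lambda)\leq m_\lambda+\varepsilon(\lambda)\leq F(u)+\lambda G(u)+\varepsilon(\lambda)$ for $u\in\dom F\cap\dom G$, together with $\varepsilon(\lambda)\to 0$. The only cosmetic difference is that you establish \ref{p:4ii} directly via $m_\lambda\geq m_0$ (from $G\geq 0$) without invoking $u_\lambda$, whereas the paper threads all three items through the single displayed chain involving $u_\lambda$; the content is identical.
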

\begin{proof}
\ref{p:4ii}:
Since $\dom F\cap\dom  G \neq \emp$, $\inf(F+\lambda G)(\FF)<\pinf$.
Let $u\in\dom  G$. Then
\begin{align}
\label{e:prop4}
(\forall\lambda\in\RPP)\quad
\nonumber\inf F(\dom  G)&\leq F(u_\lambda)\leq F(u_\lambda)+
\lambda G(u_\lambda)
\leq\inf(F+\lambda  G)(\FF)+\varepsilon(\lambda)\\
&\leq F(u)+\lambda  G(u)+\varepsilon(\lambda).
\end{align}
Hence, 
$\inf F(\dom  G)\leq\varliminf_{\lambda\to0^+}
\big(\inf(F+\lambda G)(\FF)+\varepsilon(\lambda) \big)
\leq \varlimsup_{\lambda\to 0^+}\big(\inf(F+\lambda  G)
(\FF)+\varepsilon(\lambda)\big)\leq F(u)$.
Therefore, $\lim_{\lambda\to 0^+}\big(\inf(F+\lambda  G)
(\FF)+\varepsilon(\lambda)\big)=\inf F(\dom  G)$, 
and the statement follows.

\ref{p:4i}: This follows from \ref{p:4ii} and \eqref{e:prop4}.

\ref{p:4iii}: 
By \ref{p:4ii} and \eqref{e:prop4} we have
$\lim_{\lambda\to 0^+}F(u_\lambda)+\lambda G(u_\lambda)
=\inf F(\dom G)$ which, together with \ref{p:4i}, yields 
the statement. 
\end{proof}

\begin{remark}
Assume that $\inf F (\FF)=\inf F(\dom  G)$. Then
Proposition~\ref{p:4} yields $\lim_{\lambda\to 0^+}
F(u_\lambda)=\inf F(\FF)$ and $\lim_{\lambda\to 0^+}
\inf(F+\lambda G)(\FF)=\inf F(\FF)$. In particular the condition 
$\inf F(\FF)=\inf F(\dom  G)$ is satisfied in each of the
following cases:
\begin{enumerate}
\setlength{\itemsep}{1pt} 
\item 
The lower semicontinuous envelopes of $F+\iota_{\dom G}$ and
$F$ coincide \cite[Theorem~2.6]{Atto96}.
\item 
$\overline{\dom G}\supset\dom F$ and $F$ is upper semicontinuous 
\cite[Proposition~11.1(i)]{Livre1}.
\item 
$\Argmin_\FF F\cap \dom  G\neq \emp$.
\end{enumerate}
\end{remark}

\begin{proposition}
\label{p:9} 
Suppose that Assumption~\ref{SH} holds and set 
$S=\Argmin_{\dom  G} F$.
Suppose that $F$ and $G$ are weakly lower semicontinuous, 
that $G$ is coercive, and that 
$\varepsilon(\lambda)/\lambda\to 0$ as $\lambda\to 0^+$. Then 
\begin{equation}
S \neq\emp\quad\Leftrightarrow\quad
(\exi t\in\RR)(\forall \lambda\in\RPP)\quad  G(u_\lambda)\leq t.
\end{equation}
Now suppose that $S\neq\emp$. Then the following hold:
\begin{enumerate}
\setlength{\itemsep}{1pt} 
\item 
\label{q34c87b}
$(u_{\lambda})_{\lambda\in\RPP}$ is bounded and 
there exists a vanishing sequence 
$(\lambda_n)_{n\in\mathbb{N}}$ in $\RPP$ such that 
$(u_{\lambda_n})_{n\in\NN}$ converges weakly.
\item 
\label{deffdag}  
Suppose that $u^\dag\in\FF$, that 
$(\lambda_n)_{n\in\NN}$ is a vanishing sequence in 
$\RPP$, and that $u_{\lambda_n}\weakly u^\dag$. 
Then $u^\dag\in\Argmin_{S}G$.
\item
\label{limJ}
$\lim_{\lambda\to 0^+}G(u_\lambda)=\inf G(S)$.
\item
\label{orderofinfI}
$\lim_{\lambda\to 0^+} \big(F(u_\lambda)
-\inf F (\dom  G)\big)/\lambda=0$. 
\item
\label{p:9v}
Suppose that $G$ is strictly quasiconvex 
\cite[Definition~10.25]{Livre1}. Then there exists
$u^\dag\in\FF$ such that $\Argmin_{S} G=\{u^\dag\}$ and 
$u_\lambda\weakly u^\dag$ as $\lambda\to 0^+$. 
\item
\label{p:9vi} 
Suppose that $ G$ is totally convex on bounded sets. Then 
$u_\lambda\to u^\dag=\argmin_{S}G$ as $\lambda\to 0^+$. 
\end{enumerate}
\end{proposition}
\begin{proof}
Assume that $S\neq\emp$ and let $u\in S$. For every
$\lambda\in\RPP$, $F(u_\lambda)+\lambda G(u_\lambda)\leq F(u)
+\lambda G(u)+\varepsilon(\lambda)$, 
so that $u_\lambda\in \dom  G$ and
\begin{equation}\label{eq:flambda}
G(u_\lambda)\leq \frac{F(u)-F(u_\lambda)}{\lambda} +
\frac{\varepsilon(\lambda)}{\lambda}+ G(u)\leq  G(u) +
\frac{\varepsilon(\lambda)}{\lambda}.
\end{equation}
Thus, since $(\varepsilon(\lambda)/\lambda)_{\lambda\in\RPP}$ is
bounded, so is $( G(u_\lambda))_{\lambda\in\RPP}$. Hence
$(u_\lambda)_{\lambda\in\RPP}$ is in some sublevel set of $ G$. 
Conversely, suppose that there exists $t\in\RPP$ such that
$\sup_{\lambda\in\RPP} G(u_\lambda)\leq t$. It follows from the
coercivity of $ G$ that $(u_\lambda)_{\lambda\in\RPP}$ is bounded.
Therefore, since $\FF$ is reflexive, there exist $u^\dag\in\FF$ and
a sequence $(\lambda_n)_{n\in\NN}$ in $\RPP$ such that 
$\lambda_n\to 0$ and $u_{\lambda_n}\weakly u^\dag$. 
In turn, we derive from 
the weak lower semicontinuity of $F$ and 
Proposition~\ref{p:4}\ref{p:4i} that
\begin{equation}\label{eq:ImindomJ}
F(u^\dag)\leq \varliminf F(u_{\lambda_n})=\lim
F(u_{\lambda_n})=\inf F(\dom  G).
\end{equation}
Moreover, since $ G$ is weakly lower semicontinuous,
\begin{equation}
G(u^\dag)\leq \varliminf  G(u_{\lambda_n})\leq \varlimsup
G({u_{\lambda_n}})\leq t.
\end{equation} 
Hence $u^\dag\in\dom  G$ and it follows from \eqref{eq:ImindomJ} 
that $u^\dag\in S$. 

\ref{q34c87b}:
This follows from the reflexivity of $\FF$ and the boundedness
of $(u_\lambda)_{\lambda\in\RPP}$.

\ref{deffdag}:
Arguing as above, we obtain that \eqref{eq:ImindomJ} holds.
Moreover, for every $u\in S$, 
it follows from \eqref{eq:flambda} that,
since $ G$ is weakly lower semicontinuous and
$\varepsilon(\lambda_n)/\lambda_n \to 0$, 
\begin{equation}
\label{eq:udaginargmin}
G(u^\dag)\leq \varliminf G(u_{\lambda_n})\leq\varlimsup
G({u_{\lambda_n}})\leq G(u)<+\infty.
\end{equation} 
Inequalities \eqref{eq:ImindomJ} and \eqref{eq:udaginargmin} imply
that $u^\dag\in S$ and that \ref{deffdag} holds.

\ref{limJ}: It follows from \eqref{eq:udaginargmin} and 
\ref{deffdag} that $ G(u_{\lambda_n})\to\inf G(S)$.

\ref{orderofinfI}: 
Let $\lambda\in\RPP$. Since $u_\lambda$ is an
$\varepsilon(\lambda)\verb0-0$minimizer of $F+\lambda  G$, for
every $u\in\dom  G$, we have
\begin{equation}\label{eq:341}
\frac{F(u_\lambda)-\inf F(\dom  G)}{\lambda}
+G(u_\lambda)\leq \frac{F(u)-\inf F(\dom G)}{\lambda}
+G(u)+\frac {\varepsilon(\lambda)}{\lambda}.
\end{equation}
In particular, taking $u=u^\dag$ in \eqref{eq:341} yields
\begin{equation}
\label{eq:flambdaiii}
\frac{F(u_\lambda)-\inf F(\dom G)}{\lambda}
+G(u_\lambda)\leq G(u^\dag)+
\frac{\varepsilon(\lambda)}{\lambda}.
\end{equation}
Since $\varepsilon(\lambda)/\lambda\to 0$, passing to the 
limit superior in \eqref{eq:flambdaiii} as $\lambda\to 0^+$,  and 
using \ref{deffdag} and \ref{limJ}, we get
\begin{equation}
\varlimsup_{\lambda\to 0^+} \frac{F(u_\lambda)-\inf F(\dom  G)}
{\lambda}+G(u^\dag)
\leq  G(u^\dag),
\end{equation}
which implies \ref{orderofinfI}, since
$F(u_\lambda)-\inf F(\dom  G)\geq 0$.

\ref{p:9v}: 
It follows from \ref{q34c87b} and \ref{deffdag} that
$\Argmin_{S} G\neq\emp$. Since $S$ is convex and $G$ is
strictly quasiconvex, $\Argmin_{S}G$ reduces to a singleton
$\{u^\dag\}$ and \ref{deffdag} yields 
$u_\lambda\weakly u^\dag$ as $\lambda\to 0^+$. 

\ref{p:9vi}:
Since $(u_\lambda)_{\lambda\in\RPP}$ is bounded, it follows from 
\cite[Proposition~3.6.5]{Zali02} (see also \cite{ButIusZal03}) 
that there exists $\phi\in\mathcal{A}_0$ such that
\begin{equation}
(\forall\lambda\in\RPP)\quad\phi
\Big(\frac{\norm{u_\lambda-u^\dag}}{2}\Big)\leq
\frac{G(u^\dag)+G(u_\lambda)}{2}-G\Big(
\frac{u_\lambda+u^\dag}{2}\Big).
\end{equation}
Hence, arguing as in \cite[Proof of Proposition~3.1(vi)]{Sico00} 
and using \ref{p:9v} and the weak lower semicontinuity of 
$G$, we obtain $u_\lambda\to u^\dag$ as $\lambda\to 0^+$.
\end{proof}

\begin{remark}
If $\Argmin_\FF F\cap\dom G\neq\emp$, then 
$S=\Argmin_{\dom G}F=\Argmin_\FF F\cap\dom G$ and
$\Argmin_{S}G=\Argmin_{\Argmin_\FF F}G$
(see \cite[Theorem~2.6]{Atto96} for related results).
\end{remark}

The following proposition provides an estimate of the growth of 
the function $\lambda\mapsto\norm{u_\lambda}$ as $\lambda\to 0^+$ 
when the condition $\Argmin_{\dom G}F\neq \emp$ is possibly not
satisfied. 

\begin{proposition}
\label{prop:flambda2}
Suppose that Assumption~\ref{SH} holds, that $G$ is convex with 
modulus of total convexity $\psi$, and that there exists 
$u \in \FF$ such that $\Argmin_\FF G\cap\,\dom\,F=\{u\}$. Then
\begin{equation}
\label{prop:flambda2i}
(\forall\lambda\in\RPP)
\quad\norm{u_\lambda-u}\leq
\psi(u,\cdot)^\natural \bigg(\frac{F(u)-\inf F(\dom  G)+
\varepsilon(\lambda)}{\lambda} \bigg).
\end{equation}
\end{proposition}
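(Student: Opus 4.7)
The plan is to chain three elementary inequalities and then invert $\psi(u,\cdot)$ via its upper-quasi inverse. First, I note that by hypothesis $u\in\Argmin_\FF G\cap\dom F$, so $u\in\dom G\cap\dom F$, $G(u)=\inf G(\FF)$, and Fermat's rule gives $0\in\partial G(u)$. Applying the subdifferential-type inequality \eqref{eq:totconvineq2} with $u_0=u$, $u^*=0$, and the point $u_\lambda$ in place of $u$ yields
\begin{equation}
G(u_\lambda)-G(u)\ \geq\ \psi(u,\norm{u_\lambda-u}).
\end{equation}
(If $G(u_\lambda)=+\infty$ this is trivially true; otherwise $u_\lambda\in\dom G$ and the inequality is exactly \eqref{eq:totconvineq2}.)

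Second, the $\varepsilon(\lambda)$-optimality of $u_\lambda$ applied against the comparison point $u\in\dom F\cap\dom G$ gives
\begin{equation}
F(u_\lambda)+\lambda G(u_\lambda)\ \leq\ F(u)+\lambda G(u)+\varepsilon(\lambda),
\end{equation}
which rearranges to $\lambda(G(u_\lambda)-G(u))\leq F(u)-F(u_\lambda)+\varepsilon(\lambda)$. Since the above inequality forces $F(u_\lambda)<+\infty$ (otherwise the left-hand side would be $+\infty$ as $G(u_\lambda)\geq G(u)>-\infty$), we have $u_\lambda\in\dom F\cap\dom G$, and in particular $F(u_\lambda)\geq\inf F(\dom G)$. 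Combining this with the first step produces
\begin{equation}
\lambda\,\psi(u,\norm{u_\lambda-u})\ \leq\ F(u)-\inf F(\dom G)+\varepsilon(\lambda).
\end{equation}

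Third, I divide by $\lambda\in\RPP$ and invoke the defining property of the upper-quasi inverse recalled right after \eqref{eq:psidag}, namely $\phi(t)\leq s\Rightarrow t\leq \phi^\natural(s)$, applied with $\phi=\psi(u,\cdot)$, $t=\norm{u_\lambda-u}$, and $s=(F(u)-\inf F(\dom G)+\varepsilon(\lambda))/\lambda\in\RP$. This delivers \eqref{prop:flambda2i} directly. No step looks technically hard; the only point requiring care is confirming $u_\lambda\in\dom F\cap\dom G$ so that the totally convex inequality and the bound $F(u_\lambda)\geq\inf F(\dom G)$ are both legitimate, which follows from the $\varepsilon$-optimality inequality as observed above.
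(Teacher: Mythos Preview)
Your proof is correct and follows essentially the same approach as the paper's own proof: use the $\varepsilon(\lambda)$-optimality of $u_\lambda$ to bound $G(u_\lambda)-G(u)$ by $(F(u)-\inf F(\dom G)+\varepsilon(\lambda))/\lambda$, combine this with the total convexity inequality \eqref{eq:totconvineq2} at the minimizer $u$ (where $0\in\partial G(u)$), and then apply the upper-quasi inverse. Your explicit verification that $u_\lambda\in\dom F\cap\dom G$ is a nice touch of extra care that the paper leaves implicit.
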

\begin{proof} 
Let $\lambda\in\RPP$. Since $F(u_\lambda)+\lambda  G(u_\lambda)\leq
F(u)+\lambda  G(u)+\varepsilon(\lambda)$, we have
\begin{equation}\label{eq:boundingJ}
 G(u_\lambda)-G(u)\leq \frac{F(u)-F(u_\lambda) +
\varepsilon(\lambda)}{\lambda}\leq\frac{F(u)-\inf F(\dom  G) +
\varepsilon(\lambda)}{\lambda}.
\end{equation}
Hence, recalling \eqref{eq:totconvineq2} and noting that 
$u\in\Argmin_\FF G$ $\Leftrightarrow$ $0\in\partial  G(u)$, 
we obtain $\psi(u,\norm{u_\lambda-u})\leq 
(F(u)-\inf F (\dom  G)+\varepsilon(\lambda))/\lambda$
and the claim follows. 
\end{proof}

\subsection{Concentration inequalities in Banach spaces}
\label{sec:concentration}

This section 
 provides the Banach space valued versions of the classical
Hoeffding inequality. The proof is similar to those
of \cite[Theorem~6.14 and Corollary~6.15]{SteChi2008}, which deal 
with the Hilbert space case (see also \cite{Yuri95}). A closely related result is 
\cite[Corollary 2.2]{Bosq2000}.

\begin{theorem}[Hoeffding's inequality]\label{thm:Hoeffding} 
Let $(\Omega, \mathfrak{A}, \PO)$ be a probability space and let 
$\B$ be a separable real Banach space of Rademacher type 
$q\in\left]1,2\right]$ with Rademacher constant $T_q$. Let 
$(\beta,\sigma)\in\RPP^2$, let $n\in\NN\smallsetminus\{0\}$, 
let $(\CU_i)_{1\leq i\leq n}$ be a family of independent random 
variables from $\Omega$ to $\B$ 
satisfying $\max_{1\leq i\leq n}\norm{\CU_i}\leq \beta$  
$\PO$-a.s., and let $\tau\in\RPP$. Then the following hold:
\begin{equation}
\PO \bigg[ \bigg\lVert \frac 1 n \sum_{i=1}^n (\CU_i-\EE_\PO \CU_i)
\bigg\rVert\geq   \frac{4 \beta T_q}{ n^{1-1/q}}+2 \beta
\sqrt{\frac{2 \tau}{n}}+\frac{4 \tau \beta}{3 n}    \bigg]\leq
e^{-\tau}.
\end{equation}
\end{theorem}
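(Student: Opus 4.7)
The plan is to write $Z := \bigl\lVert n^{-1}\sum_{i=1}^{n}(\CU_i - \EE_\PO \CU_i)\bigr\rVert$ as $Z = (Z - \EE_\PO Z) + \EE_\PO Z$ and to bound the two pieces separately. The expectation $\EE_\PO Z$ will be responsible for the first term $4\beta T_q/n^{1-1/q}$ via a symmetrization plus Rademacher-type-$q$ argument, while the deviation $Z - \EE_\PO Z$ will be controlled by a one-sided Bernstein-type inequality applied to $Z$ viewed as a Lipschitz function of the independent variables $(\CU_i)_{1\leq i\leq n}$. This is the same blueprint as in the Hilbertian case \cite[Theorem~6.14]{SteChi2008}, with the Hilbertian square-function estimate replaced by the Rademacher-type property introduced in Section~\ref{sec:notation}.

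For the expectation bound I would use a Gin\'e--Zinn symmetrization: introducing an independent copy $(\CU_i')_{1\leq i\leq n}$ of $(\CU_i)_{1\leq i\leq n}$ and Rademacher signs $(\varepsilon_i)_{1\leq i\leq n}$ independent of both, Jensen and the distributional identity $(\CU_i-\CU_i')_i \stackrel{d}{=}(\varepsilon_i(\CU_i-\CU_i'))_i$ yield
\begin{equation*}
\EE_\PO \Bigl\lVert\sum_{i=1}^{n}(\CU_i - \EE_\PO \CU_i)\Bigr\rVert
\leq \EE_\PO \Bigl\lVert\sum_{i=1}^{n}(\CU_i - \CU_i')\Bigr\rVert
= \EE_\PO \Bigl\lVert\sum_{i=1}^{n}\varepsilon_i(\CU_i - \CU_i')\Bigr\rVert
\leq 2\,\EE_\PO \Bigl\lVert\sum_{i=1}^{n}\varepsilon_i\CU_i\Bigr\rVert.
\end{equation*}
Conditioning on $(\CU_i)_{1\leq i\leq n}$ and invoking the Rademacher-type-$q$ condition \eqref{eq:type_p} with Jensen's inequality gives $\EE_\PO \lVert\sum_i \varepsilon_i \CU_i\rVert \leq T_q\bigl(\sum_i \EE_\PO\lVert\CU_i\rVert^q\bigr)^{1/q}\leq T_q\beta n^{1/q}$. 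Dividing by $n$ produces $\EE_\PO Z \leq 2 T_q \beta / n^{1-1/q}$.

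For the deviation, $Z$ is a function of the independent random vectors $\CU_1,\dots,\CU_n$ whose one-coordinate oscillation is at most $2\beta/n$ (by the triangle inequality and $\lVert\CU_i\rVert\leq\beta$ $\PO$-a.s.), and whose martingale-difference increments have conditional variances bounded by $\beta^2/n^2$. A one-sided Bernstein inequality for bounded martingale differences (exactly as in the Hilbertian proof of \cite[Theorem~6.14]{SteChi2008}) then yields, for every $s\in\RPP$,
\begin{equation*}
\PO\bigl[Z - \EE_\PO Z\geq s\bigr]\leq \exp\!\Bigl(-\frac{n s^2}{2(\beta^2+\beta s/3)}\Bigr),
\end{equation*}
and inverting at level $e^{-\tau}$ (using $\sqrt{a+b}\leq\sqrt{a}+\sqrt{b}$) gives $s\leq 2\beta\sqrt{2\tau/n} + 4\beta\tau/(3n)$. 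Combining with the expectation bound from Step~1 produces the three stated terms. The main obstacle is not conceptual but numerical: the factor $4$ in front of $\beta T_q/n^{1-1/q}$ requires absorbing the factor $2$ produced by symmetrization together with the tail-inversion constants so that the three pieces add up exactly as written. The geometry of $\B$ enters only once, in the Rademacher-type estimate; everything in Step~2 is a scalar concentration statement applied to $Z$ directly.
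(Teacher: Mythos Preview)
Your argument is correct and actually proves a slightly stronger inequality, but the deviation step takes a different route from the paper's. The paper does not use a bounded-difference/martingale Bernstein inequality. Instead it applies a Yurinsky-type exponential bound (quoted as \cite[Theorem~6.13]{SteChi2008}, see also \cite{Yuri95}): for independent mean-zero $V_i=\CU_i-\EE_\PO\CU_i$ and every $t>0$,
\[
\PO\Big[\big\lVert\textstyle\sum_i V_i\big\rVert\geq n\varepsilon\Big]
\leq\exp\Big(-tn\varepsilon+t\,\EE_\PO\big\lVert\textstyle\sum_i V_i\big\rVert+\textstyle\sum_i\EE_\PO\big(e^{t\lVert V_i\rVert}-1-t\lVert V_i\rVert\big)\Big),
\]
bounds the last sum by $n(e^{2t\beta}-1-2t\beta)$ using $\lVert V_i\rVert\leq 2\beta$, optimizes in $t$, and inverts; this is what naturally produces all three terms in the statement, including $4\tau\beta/(3n)$. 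For the expectation the paper invokes \cite[Proposition~9.11]{LedTal91} directly on the centered $V_i$, giving $\EE_\PO\lVert\sum V_i\rVert\leq 2T_q(2\beta)n^{1/q}=4T_q\beta n^{1/q}$ --- that is the source of the factor $4$.

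Your symmetrization on the \emph{uncentered} $\CU_i$ yields the sharper bound $\EE_\PO Z\leq 2T_q\beta/n^{1-1/q}$, so your worry about ``absorbing the factor $2$'' is unnecessary: you already undercut the stated constant. Your deviation step via bounded differences is legitimate (indeed $Z$ has one-coordinate oscillation $\leq 2\beta/n$), and in fact plain McDiarmid, without any Bernstein refinement, already gives the sub-Gaussian term $\beta\sqrt{2\tau/n}$; combined with your expectation bound this is strictly stronger than the theorem. Two small corrections: the constant in your displayed Bernstein bound should read $2\beta s/3$ rather than $\beta s/3$ (since the increment bound is $2\beta/n$), and \cite[Theorem~6.14]{SteChi2008} is \emph{not} proved via bounded differences but via the same Yurinsky inequality --- your route is genuinely different from, not identical to, the Hilbertian proof cited there.
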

\begin{proof}
For every $i\in\{1, \dots, n\}$, set $V_i=\CU_i-\EE_\PO \CU_i$, so
that $\EE_\PO V_i=0$, $\norm{V_i}\leq 2 \beta$  $\QQ$-a.s., and
$\EE_\PO \norm{V_i}^q\leq (2 \beta)^q$.  Set $\sigma = 2 \beta$.
It follows from Jensen's inequality and \cite[Proposition~9.11]{LedTal91} that
\begin{equation}
\bigg( \EE_\PO \bigg\lVert \sum_{i=1}^n \CV_i \bigg\rVert
\bigg)^q\leq \EE_\PO \bigg\lVert \sum_{i=1}^n \CV_i
\bigg\rVert^q\leq (2T_q)^q \sum_{i=1}^n \EE_\PO 
\norm{\CV_i}^q\leq (2 T_q)^q n \sigma^q.
\end{equation}
Hence $\EE_\PO \big\lVert \sum_{i=1}^n \CV_i \big\rVert\leq 2 T_q
\sigma n^{1/q}$.
Now let $t\in\RP$. Then
\begin{equation}
\sum_{i=1}^n \EE_\PO \big( e^{t \norm{\CV_i}}-1-t
\norm{\CV_i}\big) 
=\sum_{i=1}^n 
\sum_{m=2}^{+\infty} \frac{t^m}{m !} \EE_\PO \norm{\CV_i}^{m-q}
\norm{\CV_i}^q  
\leq n  \big( e^{2 t\beta}-1-2 t \beta\big)
\end{equation}
and, using  
\cite[Theorem~6.13]{SteChi2008} (see also \cite[Theorem~3.3.1]{Yuri95}), we obtain that, for every 
$\varepsilon\in\RPP$,
\begin{equation}
\label{eq:bern1}
\PO \bigg[ \bigg\lVert \sum_{i=1}^n \CV_i \bigg\rVert\geq n
\varepsilon \bigg]\leq \exp \bigg(-t \varepsilon n+2 t \sigma T_q
n^{1/q}+ n \big( e^{2 t \beta}-1-2 t \beta
\big)  \bigg)\,.
\end{equation}
For every $\varepsilon\in\RPP$ such that $\varepsilon n-2 T_q
\sigma n^{1/q}\geq 0$, the right-hand side of
\eqref{eq:bern1} reaches its minimum at
\begin{equation}
\label{2cwrey6T17a}
\bar{t}=\frac{1}{2 \beta} \log (1+\alpha),\quad \text{where}\quad
\alpha=\big( \varepsilon n-2 T_q \sigma n^{1/q} \big)
/(2 n \beta)\,.
\end{equation}
Moreover, as in \cite[Theorem~6.14]{SteChi2008}, one gets
\begin{equation}\label{eq:tbar}
-\bar{t}\varepsilon n+\bar{t}(b_qn)^{1/q}\sigma+n
 \big(e^{\bar{t}\beta}-1-\bar{t}\beta\big)
\leq-\frac{3n}{2}\frac{\alpha^2}{\alpha+3}\,.
\end{equation}
Now set
\begin{equation}\label{eq:defepsilon}
\gamma=\frac{\tau}{3 n } \quad\text{and}\quad
\varepsilon=\frac{2 \tau \beta}{3 n} \big( \sqrt{6/\gamma+1}+ 1
\big)+\frac{2 T_q \sigma}{n^{1-1/q}}. 
\end{equation}
Then $\varepsilon n-2 T_q \sigma n^{1/q}>0$ and 
\eqref{2cwrey6T17a} yield
\begin{equation}
\alpha=\frac{ 3 \gamma n}{2 \tau \beta} \Big( \varepsilon-\frac{2
T_q \sigma}{n^{1-1/q}} \Big)=\gamma+\sqrt{\gamma^2+6 \gamma},
\end{equation}
so that $\alpha^2=2\gamma (\alpha+3)=2 \tau
(\alpha+3)/(3 n)$. Thus, 
\eqref{eq:bern1} and \eqref{eq:tbar} yield
$\QQ \big[ \big\lVert \sum_{i=1}^n \CV_i
\big\rVert/n \geq \varepsilon \big]\leq e^{-\tau}$.
From \eqref{eq:defepsilon}, substituting the expression of $\gamma$
into that of $\varepsilon$, we obtain
\begin{equation}
\varepsilon=\sqrt{\frac{8  \tau
\beta^{2}}{n}+\frac{ 4\beta^2 \tau^2}{9 n^2}}+ \frac{2 \tau \beta}{3
n}+\frac{2 T_q \sigma}{n^{1-1/q}} \leq\frac{4 \tau \beta}{3
n}+2 \beta \sqrt{\frac{2  \tau}{n}}+ \frac{2 T_q
\sigma}{ n^{1-1/q}},
\end{equation}
and the statement follows.
\end{proof}


\small

\end{document}